\numberwithin{equation}{section}
\DeclareMathOperator*{\argmax}{arg\,max}
\DeclareMathOperator*{\argmin}{arg\,min}
\newcommand{\N}{{\mathbb{N}}}
\newcommand{\tsum}{\textstyle\sum}
\newcommand{\beq}{\begin{equation}}
\newcommand{\eeq}{\end{equation}}
\newcommand{\beqa}{\begin{eqnarray}}
\newcommand{\eeqa}{\end{eqnarray}}
\newcommand{\beqas}{\begin{eqnarray*}}
\newcommand{\eeqas}{\end{eqnarray*}}
\newtheorem{assumption}{Assumption}
\newcommand{\nn}{\nonumber}
\def\cP{{\cal P}}
\def\vgap{\vspace*{.1in}}
\global\long\def\R{\mathbb{R}}%
\global\long\def\D{\mathbb{D}}%
\global\long\def\E{\mathbb{E}}%
\global\long\def\P{\mathbb{P}}%
\global\long\def\Q{\mathbb{Q}}%
\newcommand{\cL}{\mathcal{L}}
\newcommand{\cF}{\mathcal{F}}
\newcommand{\cO}{\mathcal{O}}
\title{Nearly Optimal $L_p$ Risk Minimization
\thanks{This research was partially supported by Office of Naval Research grant N000142412654 and N000142412621, and Defense Advanced
Research Projects Agency grant HR00112490446.
The paper was first released at https://arxiv.org/abs/2407.15368 on 07/22/2024.}}
\author{Zhichao Jia \quad Guanghui Lan \quad Zhe Zhang}
\institute{Zhichao Jia \\ H. Milton Stewart School of Industrial and Systems Engineering, Georgia Institute of Technology, Atlanta, GA, 30332. \\ Email: {\tt zjia75@gatech.edu} \\ \\ Guanghui Lan \\ H. Milton Stewart School of Industrial and Systems Engineering, Georgia Institute of Technology, Atlanta, GA, 30332. \\ Email: {\tt george.lan@isye.gatech.edu} \\ \\ Zhe Zhang \\ School of Industrial Engineering, Purdue University, West Lafayette, IN, 47907. \\ Email: {\tt zhan5111@purdue.edu}}
\date{}
\begin{document}

\maketitle

\begin{abstract}
    Convex risk measures play a foundational role in the area of stochastic optimization. However, in contrast to risk neutral models, their applications are still limited due to the lack of efficient solution methods. In particular, the mean $L_p$ semi-deviation is a classic risk minimization model, but its solution is highly challenging due to the composition of concave-convex functions and the lack of uniform Lipschitz continuity. In this paper, we discuss some progresses on the design of efficient algorithms for $L_p$ risk minimization, including a novel lifting reformulation to handle the concave-convex composition, and a new stochastic approximation method to handle the non-Lipschitz continuity. We establish an upper bound on the sample complexity associated with this approach and show that this bound is not improvable for $L_p$ risk minimization in general through the construction of a nearly matching lower complexity bound.
\end{abstract}
\vgap

{\noindent\keywordname $L_p$ risk measure $\cdot$ risk averse optimization $\cdot$ stochastic approximation $\cdot$ lower complexity bounds}
\vgap

{\noindent\subclassname 90C15 $\cdot$ 90C25 $\cdot$ 90C30 $\cdot$ 90C47 $\cdot$ 91B05 $\cdot$ 91G70 $\cdot$ 62L20 $\cdot$ 68Q25}

\section{Introduction}

Risk averse optimization (RAO)~\cite[Section 6]{shapiro2021lectures} plays an important role in the field of stochastic optimization. Unlike risk neutral models, in RAO the decision maker needs to take into account the risk preferences while minimizing the expected cost. Specifically, we consider the RAO problem in the following mean-risk form:
\begin{align}
    \min_{x\in X} \ \left\{\rho[Z_x]:=\E[Z_x]+c\D[Z_x] \right\},
    \label{RAproblem}
\end{align}
where $x$ is the decision variable, $X\subseteq\R^n$ is the constraint set, $Z_x$ is a random variable that depends on $x$ and describes the uncertain outcome, $\E$ is the expectation, $\D$ measures the uncertainty of $Z_x$, and the coefficient $c\geq 0$ balances the expectation and risk preference. We call $\rho[\cdot]$ a risk measure.

Convex risk measure~\cite[Definition 6.4]{shapiro2021lectures} is an important subclass of $\rho[\cdot]$ that has attracted broad interests for its potential applications in risk management~\cite{fertis2012robust}, portfolio optimization~\cite{luthi2005convex}, and more recently, machine learning~\cite{gotoh2014interaction} and reinforcement learning~\cite{coache2024reinforcement}. Well-known convex mean-risk models include the average value-at-risk (AVaR)~\cite{rockafellar2000optimization,shapiro2021lectures}, which is also called conditional value-at-risk (CVaR), at level $\alpha>0$ with $c=1$ and $\D[Z_x]=\inf_{t\in\R}\E[-Z_x+t+\alpha^{-1}(Z_x-t)_+]$, as well as the mean-upper-semideviation risk measure of order $p\geq 1$ with $\D[Z_x]=\E^{1/p}[(Z_x-\E[Z_x])_+^p]$ and $c\in(0,1]$. However, the applications of many convex risk measures are still limited due to the lack of efficient solution methods. In this paper, we focus on solving the following mean-upper-semideviation RAO problem:
\begin{align}
    \min_{x\in X} \ h(x):=\E[F(x,\xi)]+c\E^{\tfrac{1}{p}}\left[(F(x,\xi)-\E[F(x,\xi)])_+^p\right],
    \label{lpproblem}
\end{align}
where $p>1,c\in(0,1]$, and $X\subseteq\R^n$ is a convex and compact set. For any $x\in X$, $F(x,\xi)\in \mathrm{L}_p(\Omega,\cF,\P):=\{Z:\Omega\mapsto\R \ \text{measurable} \mid \int_{\Omega}|Z(\omega)|^pd\P(\omega)<\infty\}$. Moreover, $F(\cdot,\xi)$ is continuous and convex on $X$ for every $\xi\in\Xi$, and $F$ is $\mathrm{L}_p$-continuous with respect to $x$ on $X$, i.e., $\lim_{x'\to x}\E[|F(x',\xi)-F(x,\xi)|^p]=0$ for all $x\in X$. As a result, in view of~\cite[Proposition 6.9]{shapiro2021lectures}, $h$ is finite-valued and continuous on $X$. Note that $c\in (0,1]$ is required because, as discussed in~\cite[Section 6]{shapiro2021lectures}, the mean-upper-semideviation risk measure with $c>1$ violates the monotonicity condition (see~\cite[Axiom R2]{shapiro2021lectures}) and therefore fails to be a convex risk measure. Consequently, $h(x)$ may become nonconvex when $c>1$. In contrast, when $c\in (0,1]$, $h(x)$ is convex by the composition rule for risk measures~\cite[Proposition 6.11]{shapiro2021lectures}, since the mean-upper-semideviation risk measure is convex and monotone (see~\cite[Example 6.23]{shapiro2021lectures}), and $F$ is convex. 
It is well-known that RAO is closely related to distributionally robust optimization (DRO) (see \cite{shapiro2021lectures}).
In particular,
it is shown in~\cite[Section 6]{shapiro2021lectures} that for any random outcome $Z\in\mathrm{L}_p(\Omega,\cF,\P)$, the mean-upper-semideviation risk measure $\rho[Z]$ is equivalent to the distributionally robust formulation $\sup_{\Q\in\cP}\E_\Q[Z]$ with the ambiguity set $\cP=\{\Q:\exists \zeta\in\mathrm{L}_q(\Omega,\cF,\P) \ \mathrm{s.t.} \ \mathrm{d}\Q/\mathrm{d}\P=1+\zeta-\E_\P[\zeta],\|\zeta\|_q\leq c,\zeta\succeq 0\}$, where $1/p+1/q=1,\|\zeta\|_q=(\int_{\omega\in\Omega}|\zeta(\omega)|^qd\P(\omega))^{1/q}$, and $\zeta\succeq 0$ means $\zeta(\omega)\geq 0$ for $\P$-almost every $\omega\in\Omega$. For any probability distribution $\Q\in\cP$, the Radon--Nikodym derivative $\mathrm{d}\Q/\mathrm{d}\P$ has expectation one and deviates from one by a zero-mean random variable with a bounded $q$-th moment under the reference probability measure $\P$.

In this paper, we focus on the design of efficient algorithms for solving the RAO problem~\eqref{lpproblem} with $p>1$ (and its associated DRO formulation). Analogous to the standard setting in stochastic programming, we assume that access to a stochastic first-order oracle: each call at $x$ returns an independent random observation $(F(x,\xi),F'(x,\xi))$ satisfying 
\[
\E[F(x,\xi)]=f(x)\ \ \mbox{and} \ \ \E[F'(x,\xi)]\in \partial f(x),
\] 
where $\partial f(x)$ denotes the subdifferential of $f$ at $x$. 
Despite the importance of $L_p$ risk minimization,
there has been relatively little work on developing stochastic first-order methods with provable sample complexity guarantees.

To motivate our approach, we briefly review several natural solution strategies and explain the challenges they encounter when applied to problem~\eqref{lpproblem}.
Define $f_0(x):=x^{1/p},F_1(x_1,x_2,\xi):=(F(x_1,\xi)-x_2)_+^p$ and $F_2:=(x,F(x,\xi))$. Then the objective function in problem~\eqref{lpproblem} admits the following nested representation:
\begin{align*}
    h(x)=\E[F(x,\xi)]+c\E^{\tfrac{1}{p}}\left[(F(x,\xi)-\E[F(x,\xi)])_+^p\right]=\E[F(x,\xi)]+cf_0(\E[F_1(\E[F_2(x,\xi)],\xi)]).
\end{align*}
A natural approach is to apply stochastic mirror descent (SMD)~\cite{nemirovski2009robust,nemirovskij1983problem}, approximating $\E[F(x,\xi)]$ by an empirical average at each iteration. However, establishing the stochastic first-order oracle complexity is challenging because the outer layer function $f_0$ is concave and is neither uniformly Lipschitz continuous nor uniformly Lipschitz smooth on $[0,\infty)$. Moreover,
owing to the concavity of $f_0$, the stochastic sequential dual (SSD) method proposed in~\cite{zhang2024optimal} for 
convex nested stochastic optimization problems is also inapplicable, since it requires every layer function to be convex. Another possibility is to disregard the convexity structure of  problem~\eqref{lpproblem}, and instead apply general methods for nonconvex stochastic optimization~\cite{ghadimi2020single}, which converge to stationary points. 
However, this approach requires all layer functions to be smooth, whereas both $F_1$ and $F_2$ may be nonsmooth whenever $F$ is nonsmooth. Even when $F$ is smooth with Lipschitz continuous gradients, the method in~\cite{ghadimi2020single} is still not directly applicable, since $f_0$ is not uniformly Lipschitz smooth on $[0,\infty)$. 
More generally, when all layer functions are nonsmooth but locally Lipschitz continuous,~\cite{ruszczynski2021stochastic} establishes an asymptotically convergent algorithm. Unfortunately, this framework still does not apply here because $f_0$ fails to be locally Lipschitz continuous on $[0,\infty)$.
To the best of our knowledge, no efficient stochastic first-order method with provable sample complexity guarantees currently exists for solving the $L_p$ risk minimization problem when $p>1$.


\subsection{Our Contributions}

In this paper, we first solve the following mean-upper-semideviation risk averse optimization problem~\eqref{lpproblem} of order $p=2$:
\begin{align}
    \min_{x\in X} \ \left\{ h(x):=\E[F(x,\xi)]+c\E^{\tfrac{1}{2}}\left[(F(x,\xi)-\E[F(x,\xi)])_+^2\right]\right\},
    \label{l2problem}
\end{align}
then extend our results to the more general $L_p$ risk minimization problem for any $p>1$. For notational simplicity, we define
\begin{align}
    f(x):=\E[F(x,\xi)] \quad \text{and} \quad g(x):=\E\left[(F(x,\xi)-\E[F(x,\xi)])_+^2\right].
    \label{def:fg}
\end{align}
Our contributions can be briefly summarized as follows.

First, we propose a lifting formulation for the upper semideviation risk measure. After lifting the upper semideviation part by introducing a one-dimensional auxiliary variable $z>0$, the outer concave layer $f_0$ disappears, and problem~\eqref{l2problem} is reformulated as a two-layer nested problem, with the new objective function being jointly convex with respect to $(F(x,\xi),z)$ (but not jointly convex with respect to $(x,z)$ in general). In order to further obtain a convex formulation and remove the remaining inner nested structure, we introduce another one-dimensional auxiliary variable $y$ as a substitution for $\E[F(x,\xi)]$. The new objective function appears to be jointly convex with respect to $(x,y,z)$, and due to the monotonic non-decreasing of the function $y+c\E^{1/2}[(F(x,\xi)-y)_+^2]$ with respect to $y$, we relax $\E[F(x,\xi)]-y=0$ to a convex constraint $\E[F(x,\xi)]-y\leq 0$. Therefore, problem~\eqref{l2problem} is reformulated to a convex functional constrained stochastic optimization problem, and equivalently, a convex-concave stochastic saddle point problem as follows:
\begin{align*}
    \inf_{x\in X,y\in\R,z>0} \max_{\lambda\geq 0} \ \left\{\tfrac{c}{z}\E\left[(F(x,\xi)-y)_+^2\right]+y+\tfrac{c}{4}z+\lambda(\E[F(x,\xi)]-y)\right\}.
\end{align*}
Note that the feasible region for $z$ can be extended to the closed set $[0,+\infty)$; when $z=0$, we define the objective function to take the value $y+\lambda(\E[F(x,\xi)]-y)$ if $\E[(F(x,\xi)-y)_+^2]=0$, and $+\infty$ otherwise. This extension ensures that the reformulation remains well-defined. Based on this reformulation, we show the boundedness of the dual variable $\lambda$ and modify the feasible regions for the primal variables $y$ and $z$, under which the Lipschitz constants for the objective functions are bounded in the order of $\cO(\epsilon^{-2})$. Therefore, problem~\eqref{l2problem} is solvable by directly applying various existing methods with the oracle complexity $\cO(\epsilon^{-6})$.

Second, in order to further improve the oracle complexity, we propose a two-layer probabilistic bisection method to solve the reformulated convex-concave stochastic saddle point problem. When $z$ is small, the size of the subgradient blows up and is dominated by the subgradient with respect to $z$. To address this issue, our method separates $z$ from $x$ and $y$, solving a convex-concave stochastic minimax problem under a fixed $z$ in the inner layer, and searching for the optimal value $z=z^*$ through bisections in the outer layer. The oracle complexity of our method is bounded by $\Tilde{\cO}\left(\max\{z^*,\epsilon\}^{-2}\epsilon^{-2}\right)$ for finding an $\epsilon$-optimal solution for problem~\eqref{l2problem}, where $z^*$ relies on the specific instance of problem~\eqref{l2problem}. Moreover, we introduce a reliable stopping criterion that prevents our method from incurring the worst-case complexity $\Tilde{\cO}(\epsilon^{-4})$ when $z^*>\cO(\epsilon)$, by avoiding excessive computations after an $\epsilon$-optimal solution is found. The probabilistic bisection method, which is conducted based on the linear combination of the stochastic subgradients output by the inner layer stochastic mirror descent method, appears to be novel and can be of independent interest.

Third, we derive a lower complexity bound for solving problem~\eqref{l2problem} in general, by constructing and analyzing a specific subclass of problems. The subclass of problems satisfies the assumptions we make for problem~\eqref{l2problem}, and we can specify these problems to have the instance-dependent value $z^*$ as any nonnegative value. We illustrate the relation between the optimality gaps $g(x)-g(x^*)$ and $h(x)-h(x^*)$ in these problems, where $x^*$ is the optimal solution for $\min_{x\in X}h(x)$, and analyze the lower complexity bound $\Omega(\epsilon^{-2})$ for the problem $\min_{x\in X}g(x)$. Based on the results above, we generate a lower complexity bound for problem~\eqref{l2problem} as $\Omega\left(\max\{z^*,\epsilon\}^{-2}\epsilon^{-2}\right)$, which matches the oracle complexity achieved by our probabilistic bisection method. It shows that our method is nearly optimal for solving problem~\eqref{l2problem}.

Fourth, we extend our lifting formulation and probabilistic bisection method to the $L_p$ risk minimization problem~\eqref{lpproblem} of any order $p>1$, and construct a lower complexity bound for this more general class of problems. The oracle complexity for our method is $\Tilde{\cO}\left(\max\{z^*,\epsilon\}^{-2p+2}\epsilon^{-2}\right)$, which matches the lower complexity bound as $\Omega\left(\max\{z^*,\epsilon\}^{-2p+2}\epsilon^{-2}\right)$. Therefore, our method is nearly optimal for an $L_p$ risk minimization problem in general.

This paper is structured as follows. Section 2 introduces the lifting formulation for the upper semideviation risk measure. Section 3 presents the two-layer probabilistic bisection method and Section 4 is dedicated to the lower complexity bound for the mean-upper-semideviation RAO problem. Section 5 extends our formulation, method and lower complexity bound to the general $L_p$ risk minimization problem, followed by the conclusion in Section 6.
\subsection{Notations and Assumptions}

Throughout the paper, we use the following notations and assumptions. Let $\|\cdot\|_X$ denote a norm on $\R^n$, and $\|\cdot\|_{*,X}$ denote the dual norm of $\|\cdot\|_X$ that is defined as $\|y\|_{*,X}=\sup_{\|x\|_X\leq 1}\{y^Tx\}$. Since $X$ is convex and compact, we assume $\|x_1-x_2\|_X\leq D_X$ for any $x_1,x_2\in X$. Moreover, we assume the solution set $X^*:=\argmin_{x\in X}h(x)$ to be nonempty and denote $x^*\in X^*$ as an arbitrary optimal solution. The standard inner product is denoted as $\langle\cdot,\cdot\rangle$, $(x)_+$ is defined as $\max\{x,0\}$ for any $x\in\R$, the convex hull of a set $S$ is denoted as $\mathrm{co}\{S\}$, and the ball with radius $r$ centered at $x$ is defined as $B(x,r):=\{y:\|y-x\|\leq r\}$. For any function $r:S\to\R$, we say that the infimum (respectively, supremum) of $r$ over $S$ is approached as $s\to s'\in\mathrm{cl}(S)$ if $\inf_{s\in S}r(s)=\lim_{s\to s'}r(s)$ (respectively, $\sup_{s\in S}r(s)=\lim_{s\to s'}r(s)$), where $\mathrm{cl}(S)$ denotes the closure of $S$. For any continuous function $r: X\to\R$, we interpret differentiability at any $x\in X$ as differentiability of some extension of $r$ to an open neighborhood of $x$. In this sense, the set of Clarke subgradients of $r$ at $x$ is defined as:
\begin{align*}
    \partial r(x)=\mathrm{co}\left\{\lim_{i\to\infty}\nabla r(x_i): r \ \text{is differentiable at every} \ x_i\in X, \text{and} \ x_i\to x\right\}.
\end{align*}
We denote $r'(x)\in\partial r(x)$. A function $r:\R^n\to\R$ is $L$-Lipschitz continuous on $X\subseteq\R^n$ if for any $x_1,x_2\in X$,
\begin{align*}
    |r(x_1)-r(x_2)|\leq L\|x_1-x_2\|_X,
\end{align*}
and by~\cite[Proposition 2.1.2]{clarke1990optimization}, it is equivalent that for any $x\in X$
\begin{align*}
    \|r'(x)\|_{*,X}\leq L.
\end{align*}
A continuous function $r:\R^n\to\R$ is $\mu$-strongly convex on $X\subseteq\R^n$ if for any $x_1,x_2\in X$ and $r'(x_1)\in\partial r(x_1)$
\begin{align*}
    r(x_2)\geq r(x_1)+\langle r'(x_1),x_2-x_1\rangle+\frac{\mu}{2}\|x_1-x_2\|_X^2.
\end{align*}

Let $v:X\to\R$ be any distance generating function that is continuously differentiable and $1$-strongly convex (modulus $1$ with respect to $\|\cdot\|_X$). The prox-function (Bregman divergence) $V(\cdot,\cdot)$ associated with $v(\cdot)$ is defined as:
\begin{align}
    V(x_1,x_2)=v(x_2)-v(x_1)-\langle\nabla v(x_1),x_2-x_1\rangle, \ \forall x_1,x_2\in X,
    \label{def:prox-function}
\end{align}
and we have
\begin{align}
    V(x_1,x_2)\geq\tfrac{1}{2}\|x_1-x_2\|_X^2, \ \forall x_1,x_2\in X
    \label{assum:Vstrongconvex}
\end{align}
by the strong convexity of $v$.

As a standard measure of optimality, we define $\Bar{x}$ as an $\epsilon$-optimal solution for a general convex stochastic optimization problem $\min_{x\in X}\{r(x):=\E_{\xi}[R(x,\xi)]\}$, if $\Bar{x}\in X$ and
\begin{align}
    \E[r(\Bar{x})]-\min_{x\in X}r(x)\leq\epsilon,
    \label{eq:def_eps_optimal_solution}
\end{align}
where the expectation in~\eqref{eq:def_eps_optimal_solution} is taken over the randomness of the samples used by the algorithm.

In problem~\eqref{l2problem}, we assume $f(x)=\E[F(x,\xi)]$ is $L_f$-Lipschitz continuous on $X$, i.e.,
\begin{align}
    f(x_1)-f(x_2)\leq L_f\|x_1-x_2\|_X,\quad\forall x_1,x_2\in X \qquad \text{and} \qquad \|f'(x)\|_{*,X}\leq L_f,\quad\forall x\in X.
    \label{assum:flipschitz}
\end{align}
Moreover, we assume that there exists a first-order stochastic oracle that returns a pair of unbiased estimators $(F(x,\xi),F'(x,\xi))$ for $(f(x),f'(x))$ independently in each query to the oracle. Particularly, given any $x\in X$, it outputs $F'(x,\xi)$ such that
\begin{align}
    \E\left[\|F'(x,\xi)-f'(x)\|_{*,X}^2\right]\leq\sigma_f^2,
    \label{assum:fgradbddvar}
\end{align}
and outputs $F(x,\xi)$ such that
\begin{align}
    \E\left[(F(x,\xi)-f(x))^2\right]\leq\beta^2.
    \label{assum:fbddvar}
\end{align}
\section{Lifting Formulation for Upper Semideviation Risk Measure}

In this section, we introduce a novel lifting strategy to take care of the upper semideviation part, and reformulate problem~\eqref{l2problem} as a convex functional constrained stochastic optimization problem, and equivalently, a convex-concave stochastic saddle point problem.

\subsection{Convex Functional Constrained Stochastic Optimization Problem}

In this subsection, we introduce two auxiliary variables to help us reformulate problem~\eqref{l2problem} as a convex functional constrained stochastic optimization problem, which is in a more solvable form and could potentially be handled by various constrained stochastic methods.

\subsubsection{First Auxiliary Variable --- Removing Outer Nested Structure}

We define the upper semideviation risk measure $R(U):=\E^{1/2}[(U-\E[U])_+^2]$ where $U$ is a random variable. Our first goal is to get rid of the outer concave layer (the square root) in the upper semideviation. To motivate our approach, we first consider a proper, upper semicontinuous and concave function $f_0:S\to\mathbb{R}\cup\{\pm\infty\}$, with $S$ being closed and the variable $s \in S$ possibly being in expectation form (e.g. $s=\E[U]$ where $U$ is a random variable). It follows that $-f_0(s)$ is a proper, lower semicontinuous and convex function, and its Fenchel conjugate, given by $(-f_0)^*(\pi)=\sup_{s\in S}\pi s+f_0(s)$, is convex. Let $\Pi$ denote the domain of $(-f_0)^*$, i.e., $\Pi=\left\{\pi|(-f_0)^*(\pi)<\infty\right\}$. For any $\pi\in\Pi$, we suppose that the previous supremum is attained at $s=s^*(\pi)$, thus $(-f_0)^*(\pi)=\pi s^*(\pi)+f_0(s^*(\pi))$. By the Fenchel-Moreau Theorem, the biconjugate function satisfies $(-f_0)^{**}=-f_0$ such that
\begin{align*}
    -f_0(s)=(-f_0)^{**}(s)=\sup_{\pi\in\Pi}s\pi-(-f_0)^*(\pi)=\sup_{\pi\in\Pi}s\pi-\pi s^*(\pi)-f_0(s^*(\pi)),
\end{align*}
and as a consequence, we have $f_0(s)=\inf_{\pi\in\Pi}-\pi s+\pi s^*(\pi)+f_0(s^*(\pi))$. Note that when the variable $s$ is in expectation form, the formula above provides us with a way to decouple the concave layer $f_0$ from the expectation term $s$. We apply this approach to the special case with $f_0(s):=s^{1/2}$ on $S=[0,\infty)$ and $s=\E[(U-\E[U])_+^2]$ (i.e., $f_0(s)=R(U)$). Notice that $-f_0(s)$ is proper convex and continuous on $S$ with the conjugate function given by $(-f_0)^*(\pi)=\sup_{s\in S}\pi s+s^{1/2}$. The domain of $(-f_0)^*$ is $\Pi:=(-\infty,0)$, since $\lim_{s \to \infty} \pi s+s^{1/2}\to\infty$ when $\pi \ge 0$; when $\pi<0$, $\sup_{s\in S} \pi s+s^{1/2}$ is attained at $s^*=1/(4\pi^2)$ and $(-f_0)^*(\pi)=-1/(4\pi)$ for any $ \pi \in \Pi$. By the Fenchel-Moreau Theorem, we have $-f_0(s)=(-f_0)^{**}(s)=\sup_{\pi\in\Pi}s\pi-(-f_0)^*(\pi)=\sup_{\pi<0}s\pi+1/(4\pi)$ where $(-f_0)^{**}$ is the biconjugate function. Finally, if we set $z=-1/\pi\in(0,\infty)$, we can formulate $f_0(s)$ as the infimum of a perspective function (i.e. $f_0(s)=\inf_{z>0}s/z+z/4$). Inspired by this discussion, we provide the lemma below.
\begin{lemma}
    For any random variable $U$, $R(U)$ can be reformulated as
    \begin{align}
        R(U)=\inf_{z>0}\left\{r(U,z):=\tfrac{\E\left[(U-\E[U])_+^2\right]}{z}+\tfrac{z}{4}\right\},
        \label{def:RU}
    \end{align}
    and $\inf_{z>0}r(U,z)$ is approached as $z\to 2\E^{1/2}\left[(U-\E[U])_+^2\right]$.
    \label{lem1form}
\end{lemma}
\begin{proof}
    For any $U$, $r(U,z)$ is convex with respect to $z$ when $z>0$. Let $s$ denote $\E[(U-\E[U])_+^2]$. When $s=0$, $r(U,z)=z/4$ and its infimum is approached as $z\to 0^+$; when $s>0$, according to the first-order optimality condition $-s/z^2+1/4=0$, $\inf_{z>0}r(U,z)$ is attained at $z=2s^{1/2}$. Combining the two cases completes the proof.
\end{proof}

\vgap

In the above formulation~\eqref{def:RU} of $r(U,z)$, we adopt the auxiliary variable $z$ to lift $\E^{1/2}[(U-\E[U])_+^2]$ as $\E[(U-\E[U])_+^2]/z+z/4$, and in this way, the square root over the expectation vanishes. When we take $z=2\E^{1/2}[(U-\E[U])_+^2]$ which is the optimal value of the auxiliary variable, the lifted form reduces back to the square-root form. Meanwhile, since $\E[(U-\E[U])_+^2]$ is convex with respect to $U$, $r(U,z)=z\E[(U/z-\E[U/z])_+^2]+z/4$ is jointly convex with respect to $(U,z)$ when $z>0$, thus $\inf_{U,z}r(U,z)$ provides a convex problem formulation for minimizing $R(U)$. It follows that problem~\eqref{l2problem} can be reformulated as
\begin{align}
    \inf_{x\in X,z>0}\left\{\E[F(x,\xi)]+cr(F(x,\xi),z)=\E\left[F(x,\xi)+\tfrac{c(F(x,\xi)-\E[F(x,\xi)])_+^2}{z}+\tfrac{cz}{4}\right]\right\}.
    \label{eq1form}
\end{align}

\subsubsection{Second Auxiliary Variable --- Removing Inner Nested Structure}

Although the outer concave layer has disappeared in the formulation~\eqref{eq1form} above, our original concerns still remain. To begin with, problem~\eqref{eq1form} still contains a nested structure, and the existing optimal methods in~\cite{zhang2024optimal} for solving this problem require the joint convexity of the objective function with respect to $(x,z)$. However, the joint convexity is not guaranteed~\footnote{We illustrate this through a trivial example. Consider the one-dimensional case that $F(x,\xi)=16$ with probability $1/2$, $F(x,\xi)=4x^2$ with probability $\tfrac{1}{2}$, $X=[-1,1]$ and $c=1$. In this case, $E[F(x,\xi)]=8+2x^2$ and the objective function in problem~\eqref{eq1form} under $z=1$ is $8+2x^2+[16-(8+2x^2)]^2/2+1/4=2x^4-14x^2+161/4$, with the second derivative with respect to $x$ being $24x^2-28<0$. Thus, the objective function for problem~\eqref{eq1form} is not convex with respect to $x$ under fixed $z=1$, hence not jointly convex with respect to $(x,z)$.} (the convexity with respect to $x$ is not even guaranteed under fixed $z>0$). Therefore, our next goal is to get rid of the inner nested structure. It can be reached similarly through introducing another one-dimensional auxiliary variable $y$, and hiding the value of $\E[F(x,\xi)]$ inside the optimal new auxiliary variable $y^*(x)$. More specifically, we define the upper semideviation risk measure from the target $y$ as $R_y(U):=\E^{1/2}[(U-y)_+^2]$. Similar to Lemma~\ref{lem1form}, $R_y(U)$ can also be reformulated as
\begin{align*}
    R_y(U)=\inf_{z>0}\left\{r_y(U,z):=\tfrac{\E\left[(U-y)_+^2\right]}{z}+\tfrac{z}{4}\right\},
\end{align*}
where the infimum is approached as $z\to 2\E^{1/2}[(U-y)_+^2]$. Note that $R(U)$ and $r(U)$ use $\E[F(x,\xi)]$ as the target $y$ in $R_y(U)$ and $r_y(U)$, respectively. Then following from~\eqref{eq1form}, problem~\eqref{l2problem} can be reformulated as
\begin{align}
    \begin{cases}
        \inf_{x\in X,y\in\R,z>0} \ &\phi(x,y,z):=y+cr_y(F(x,\xi),z)=\E\left[y+\tfrac{c(F(x,\xi)-y)_+^2}{z}+\tfrac{cz}{4}\right] \\
        \mathrm{s.t.} \ &y=\E[F(x,\xi)].
    \end{cases}
    \label{eq2form}
\end{align}
By introducing the auxiliary variable $y$, not only the inner nested structure is eliminated from the objective function in problem~\eqref{eq1form}, but the new objective function $\phi(x,y,z)$ becomes jointly convex with respect to $(x,y,z)$. To show this in details, we start from the joint convexity of the function $\E[(U-y)_+^2]$ with respect to $(U,y)$, which is guaranteed by the convexity and monotonic increasing of the outer squared function over $[0,\infty)$, as well as the inner piecewise linear function. Then the function $\E[y+c(U-y)_+^2/z+cz/4]=y+cz\E[(U/z-y/z)_+^2]+cz/4$ is jointly convex with respect to $(U,y,z)$ when $z>0$. Since this function is also monotonically non-decreasing with respect to $U$, and $U=F(x,\xi)$ is convex, the joint convexity of $\phi(x,y,z)$ with respect to $(x,y,z)$ is justified.

The convex objective function in problem~\eqref{eq2form} provides us with a hope to construct a convex constrained problem that is equivalent to problem~\eqref{l2problem}. However, the auxiliary variable $y$ is restricted by a nonconvex equality constraint in problem~\eqref{eq2form}. Fortunately, we observe that the function
\begin{align}
    \Phi(x,y):=y+cR_y(F(x,\xi),z)=y+c\E^{\tfrac{1}{2}}\left[(F(x,\xi)-y)_+^2\right]=\inf_{z>0}\phi(x,y,z)
    \label{def:Phi_xy}
\end{align}
is monotonically non-decreasing with respect to $y$ for every fixed $x\in X$, by the following lemma.

\begin{lemma}
    For any fixed $x\in X$, $\Phi(x,y)$ defined in~\eqref{def:Phi_xy} is monotonically non-decreasing with respect to $y$ on $\R$.
    \label{lem:Phi_monotone_y}
\end{lemma}
\begin{proof}
    For any fixed $x\in X$, we show that the subgradient of $\Phi(x,y)$ with respect to $y$, i.e., $\Phi'_y(x,y)$ is nonnegative at every $y\in\R$. To this end, we consider two cases. The first case $\E[(F(x,\xi)-y)_+^2]=0$ is trivial, because in view of~\eqref{def:Phi_xy}, we have $\Phi(x,y)=y$ and $\Phi'_y(x,y)=1>0$. Below we discuss the second case $\E[(F(x,\xi)-y)_+^2]>0$. Using the chain rule (see~\cite[Theorem 2.3.9]{clarke1990optimization} and~\cite[Theorem 4.3.1]{hiriart2004fundamentals}), we obtain
    \begin{align}
        \Phi'_y(x,y)=1+c\tfrac{\partial \E^{1/2}\left[(F(x,\xi)-y)_+^2\right]}{\partial \E\left[(F(x,\xi)-y)_+^2\right]}\times\tfrac{\partial \E\left[(F(x,\xi)-y)_+^2\right]}{\partial y}=1-\tfrac{c\E\left[(F(x,\xi)-y)_+\right]}{\E^{1/2}\left[(F(x,\xi)-y)_+^2\right]},
        \label{chainrule1}
    \end{align}
    in which the second equality applies
    \begin{align}
        \tfrac{\partial \E[(F(x,\xi)-y)_+^2]}{\partial y}=\E\left[\tfrac{\partial(F(x,\xi)-y)_+^2}{\partial y}\right]=\E\left[-2(F(x,\xi)-y)_+\right],
        \label{chainrule2}
    \end{align}
    where the first equality follows from the dominated convergence theorem, i.e., $|[(F(x,\xi)-y-\Delta y)_+^2-(F(x,\xi)-y)_+^2]/\Delta y| \leq 2(|F(x,\xi)|+|y|)+|\Delta y|=:G(x,y,\Delta y,\xi)$ for all $\Delta y>0$, and $G(x,y,\Delta y,\xi)$ is integrable with respect to $\xi$ since $F(x,\xi)\in\mathrm{L}_2(\Omega,\cF,\P)$, and the second equality in~\eqref{chainrule2} holds by combining the following two cases on $\xi$: when $F(x,\xi)\geq y$, we have $\partial (F(x,\xi)-y)_+^2/\partial y=\partial (F(x,\xi)-y)^2/\partial y=-2(F(x,\xi)-y)$; otherwise, $F(x,\xi)<y$ yields $(F(x,\xi)-y)_+=0$ and thus $\partial (F(x,\xi)-y)_+^2/\partial y=0$. It then follows from~\eqref{chainrule1} that
    \begin{align}
        \Phi'_y(x,y)=1-\tfrac{c\E\left[(F(x,\xi)-y)_+\right]}{\E^{1/2}\left[(F(x,\xi)-y)_+^2\right]}\geq\tfrac{\E^{1/2}\left[(F(x,\xi)-y)_+^2\right]-\E\left[(F(x,\xi)-y)_+\right]}{\E^{1/2}\left[(F(x,\xi)-y)_+^2\right]}\geq 0,
        \label{eq:ymonotone}
    \end{align}
    where the first inequality is satisfied since $c\leq 1$, and the second inequality holds due to the fact that $\E[(F(x,\xi)-y)_+^2]=\mathrm{Var}\left((F(x,\xi)-y)_+\right)+\E^2[(F(x,\xi)-y)_+]\geq\E^2[(F(x,\xi)-y)_+]$. Combining the two cases above completes the proof.
\end{proof}
\vgap

Lemma~\ref{lem:Phi_monotone_y} allows us to relax the equality restriction on $y$ (see~\eqref{eq2form}) to an inequality constraint. In other words, problem~\eqref{eq2form} can be equivalently written as
\begin{align*}
    \begin{cases}
        \min_{x\in X,y\in\R} \ &\Phi(x,y)=y+c\E^{\tfrac{1}{2}}\left[(F(x,\xi)-y)_+^2\right] \\
        \mathrm{s.t.} \ &y\geq\E[F(x,\xi)].
    \end{cases}
\end{align*}
Notice that the inequality constraint above is convex. Since $\Phi(x,y)=\inf_{z>0}\phi(x,y,z)$, we finally present the formulation of an equivalent convex functional constrained stochastic optimization problem for problem~\eqref{l2problem} as
\begin{align}
    \begin{cases}
        \inf_{x\in X,y\in\R,z>0} \ &\phi(x,y,z)=\tfrac{c}{z}\E\left[(F(x,\xi)-y)_+^2\right]+y+\tfrac{c}{4}z \\
        \mathrm{s.t.} \ &\E[F(x,\xi)]-y\leq 0.
    \end{cases}
    \label{eq3form}
\end{align}
The result below summarizes important properties of $\phi(x,y,z)$ and $\Phi(x,y)$.
\begin{lemma}
    For any $x\in X$ and $y\in\mathbb{R}$, $\inf_{z>0}\phi(x,y,z)=\Phi(x,y)$ is approached as $z\to 2\E^{1/2}[(F(x,\xi)-y)_+^2]$. Moreover, for any $x\in X$, $\inf_{y\geq\E[F(x,\xi)],z>0}\phi(x,y,z)=\inf_{y\geq\E[F(x,\xi)]}\Phi(x,y)=h(x)$ is attained at $y=\E[F(x,\xi)]$ and approached as $z\to 2\E^{1/2}[(F(x,\xi)-\E[F(x,\xi)])_+^2]$.
    \label{lem:phi}
\end{lemma}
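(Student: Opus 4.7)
The plan is to decompose the joint infimum into an inner minimization over $z$ and then an outer minimization over $y$, so that each layer is handled by a result already established in the excerpt.

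For the first claim, I fix arbitrary $x \in X$ and $y \in \R$, and apply the one-variable minimization argument used in Lemma~\ref{lem1form} to the function
\begin{align*}
    z \;\mapsto\; \frac{c}{z}\E\!\left[(F(x,\xi)-y)_+^2\right] + \frac{c}{4}z.
\end{align*}
Since $c>0$ is a constant multiplier, this is exactly $c\cdot r_y(F(x,\xi),z)$ in the notation introduced right before~\eqref{eq2form}, and Lemma~\ref{lem1form} (applied with target $y$ in place of $\E[U]$, which only shifts $F(x,\xi)$ by a constant inside the plus-function and does not alter the reasoning) immediately gives that the infimum over $z>0$ equals $c\,\E^{1/2}[(F(x,\xi)-y)_+^2]$, attained at $z = 2\E^{1/2}[(F(x,\xi)-y)_+^2]$. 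Adding the $y$ term, which is independent of $z$, yields $\inf_{z>0}\phi(x,y,z) = \Phi(x,y)$, as required.

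For the second claim, I reuse the monotonicity inequality~\eqref{eq:ymonotone}, which established $\Phi'_y(x,y) \ge 0$, i.e.\ $\Phi(x,\cdot)$ is nondecreasing in $y$ on all of $\R$. Therefore, restricting $y$ to the half-line $\{y : y \ge \E[F(x,\xi)]\}$, the infimum is attained at the left endpoint $y = \E[F(x,\xi)]$. Evaluating $\Phi$ there recovers the original mean-upper-semideviation objective,
\begin{align*}
    \Phi(x,\E[F(x,\xi)]) \;=\; \E[F(x,\xi)] + c\,\E^{1/2}\!\left[(F(x,\xi)-\E[F(x,\xi)])_+^2\right] \;=\; h(x),
\end{align*}
by the definition of $h$ in~\eqref{l2problem}. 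Substituting $y = \E[F(x,\xi)]$ into the expression for the optimal $z$ from the first claim gives the advertised optimizer $z = 2\E^{1/2}[(F(x,\xi)-\E[F(x,\xi)])_+^2]$, which together with the interchangeability of infima (the infimum over $y$ and $z$ can be performed in either order, since $\phi$ is bounded below on the feasible set) yields the joint statement.

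The argument is essentially assembly of two facts already proved; the only subtlety is to justify swapping $\inf_{z>0}$ and $\inf_{y\ge\E[F(x,\xi)]}$, which is immediate from $\inf_{y,z}\phi(x,y,z) = \inf_y \inf_z \phi(x,y,z) = \inf_y \Phi(x,y)$ by part~1, so no measure-theoretic care is needed. I anticipate no genuine obstacle — the main thing to be careful about is ensuring that, in the degenerate case $\E[(F(x,\xi)-y)_+^2] = 0$, the claimed optimizer $z = 0$ lies on the boundary of $\{z>0\}$; here the infimum is still $y$ and is approached as $z \downarrow 0$, matching $\Phi(x,y) = y$, so the formula remains valid as an infimum (attained on the closure) in that degenerate case.
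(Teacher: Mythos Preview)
Your proposal is correct and follows essentially the same route as the paper: the paper also splits into the inner $z$-minimization (handled via the first-order condition, which is the content of Lemma~\ref{lem1form} you invoke) with the degenerate case $\E[(F(x,\xi)-y)_+^2]=0$ treated separately, and then the outer $y$-minimization via the monotonicity~\eqref{eq:ymonotone}. Your write-up is in fact slightly more explicit about the interchange of infima and the boundary behaviour in the degenerate case.
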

\begin{proof}
    For any $x\in X$ and $y\in\mathbb{R}$, we consider two cases. When $\E[(F(x,\xi)-y)_+^2]=0$, $\phi(x,y,z)=y+cz/4$ and $\inf_{z>0}\phi(x,y,z)$ is approached as $z\to 0^+$. Otherwise, since $\phi(x,y,z)$ is convex with respect to $z$, the first-order optimality condition is
    \begin{align*}
        -\tfrac{c}{z^2}\E\left[(F(x,\xi)-y)_+^2\right]+\tfrac{c}{4}=0,
    \end{align*}
    thus $\inf_{z>0}\phi(x,y,z)$ is approached as $z\to 2\E^{1/2}[(F(x,\xi)-y)_+^2]$. Moreover, for any $x\in X$, due to the monotonic non-decreasing of $\Phi(x,y)$ with respect to $y$ illustrated in~\eqref{eq:ymonotone}, $\inf_{y\geq\E[F(x,\xi)]}\Phi(x,y)$ is attained at $y=\E[F(x,\xi)]$.
\end{proof}
\vgap

Let $x^*\in X$ denote the optimal solution for problem~\eqref{l2problem}, and $(x^*,y^*,z^*)$ denote the point where the infimum in problem~\eqref{eq3form} is approached, then based on Lemma~\ref{lem:phi},
\begin{align}
    y^*=\E[F(x^*,\xi)] \quad \text{and} \quad z^*=2\E^{\tfrac{1}{2}}\left[(F(x^*,\xi)-\E[F(x^*,\xi)])_+^2\right].
    \label{def:yzstar}
\end{align}
The optimality gap of problem~\eqref{eq3form} provides an upper bound for the optimality gap of problem~\eqref{l2problem}, that is, for any $\Bar{x}\in X$, $\Bar{y}\geq\E[F(\Bar{x},\xi)]$ and $\Bar{z}>0$,
\begin{align*}
    h(\Bar{x})-h(x^*)=\inf_{y\geq\E[F(\Bar{x},\xi)],z>0}\phi(\Bar{x},y,z)-\inf_{y\geq\E[F(x^*,\xi)],z>0}\phi(x^*,y,z)\leq \phi(\Bar{x},\Bar{y},\Bar{z})-\lim_{z\to z^*}\phi(x^*,y^*,z),
\end{align*}
where the equality follows from Lemma~\ref{lem:phi}.

\subsection{Convex-Concave Stochastic Saddle Point Problem}

In this subsection, we consider one natural way to solve the convex functional constrained problem~\eqref{eq3form}. First, notice that the feasible region $\{z\mid z>0\}$ for the variable $z$ is open. Here, we extend the objective function $\phi(x,y,z)$ in problem~\eqref{eq3form} by defining
\begin{align*}
    \Tilde{\phi}(x,y,z):=\begin{cases}
        \phi(x,y,z) \ &\text{if } z>0, \\
        y+\tfrac{c}{4}z \ &\text{if } \E[(F(x,\xi)-y)_+^2]=0, \\
        +\infty \ &\text{otherwise}
    \end{cases}
\end{align*}
as an extended real-valued function over a closed set $X\times\R\times[0,+\infty)$. Clearly, in view of Lemma~\ref{lem:phi}, problem~\eqref{eq3form} is equivalent to the following problem
\begin{align}
    \begin{cases}
        \min_{x\in X,y\in\R,z\geq 0} \ &\Tilde{\phi}(x,y,z) \\
        \mathrm{s.t.} \ &\E[F(x,\xi)]-y\leq 0.
    \end{cases}
    \label{extendedproblem}
\end{align}
Since the variable $y$ is unbounded from above, there exists $(x,y)$ such that $\E[F(x,\xi)]-y<0$, then the Slater's condition is satisfied for problem~\eqref{extendedproblem}. Therefore, the strong duality holds, which allows us to transform problem~\eqref{extendedproblem} to a convex-concave minimax problem,  i.e.,
\begin{align}
    \min_{x\in X,y\in\R,z\geq 0}\max_{\lambda\geq 0} \ \left\{\Tilde{L}(x,y,z,\lambda):=\Tilde{\phi}(x,y,z)+\lambda\left(\E[F(x,\xi)]-y\right)\right\}.
    \label{extendedminmaxproblem}
\end{align}
Furthermore, problem~\eqref{extendedminmaxproblem} is equivalent to the following reformulated problem
\begin{align}
    \inf_{x\in X,y\in\R,z>0}\max_{\lambda\geq 0} \ \left\{L(x,y,z,\lambda):=\tfrac{c}{z}\E\left[(F(x,\xi)-y)_+^2\right]+y+\tfrac{c}{4}z+\lambda(\E[F(x,\xi)]-y)\right\},
    \label{eq4form}
\end{align}
for problem~\eqref{eq3form}, where $L(x,y,z,\lambda)$ is defined as the Lagrange function and $\lambda$ is the Lagrange multiplier. Here, $L(x,y,z,\lambda)$ is convex with respect to the primal variable $(x,y,z)$ and concave (linear) with respect to the dual variable $\lambda$. The saddle point $(x^*,y^*,z^*,\lambda^*)$ of this minimax problem~\eqref{eq4form} identifies the point $(x^*,y^*,z^*)$ where the infimum in problem~\eqref{eq3form} is approached, and the optimal solution $x^*$ for problem~\eqref{l2problem}. We also define
\begin{align}
    \cL(x,y,z,\lambda,\xi):=\tfrac{c}{z}(F(x,\xi)-y)_+^2+y+\tfrac{c}{4}z+\lambda(F(x,\xi)-y),
    \label{def:caliL}
\end{align}
and have $L(x,y,z,\lambda)=\E[\cL(x,y,z,\lambda,\xi)]$. Below we analyze some important properties about problem~\eqref{eq4form}, including the boundedness for the optimal dual variable $\lambda^*$, an upper bound for the optimality gap of problem~\eqref{l2problem} provided by problem~\eqref{eq4form}, and the Lipschitz continuity of the Lagrange function $L(x,y,z,\lambda)$.

\subsubsection{Boundedness for Optimal Dual Variable $\lambda^*$ and Optimality Gap}
Even though the domain of $\lambda$ in problem~\eqref{extendedproblem} appears to be unbounded, 
we can specify a convex and compact set $\Lambda$ 
that contains the optimal dual variable $\lambda^*$. Under the existence of a Slater point for problem~\eqref{extendedproblem}, the KKT optimality conditions are satisfied at its optimal solution $(x^*,y^*,z^*)$ with the Lagrange multiplier $\lambda=\lambda^*$. Let $\Tilde{L}'_y(x,y,z,\lambda)$ denote the subgradient of $\Tilde{L}(x,y,z,
\lambda)$ (see the definition of $\Tilde{L}$ in~\eqref{extendedminmaxproblem}) with respect to $y$ at $(x,y,z,\lambda)$. According to the KKT conditions, when $\E[(F(x^*,\xi)-y^*)_+^2]=0$, $\Tilde{L}'_y(x^*,y^*,z^*,\lambda^*)=1-\lambda^*=0$ implies $\lambda^*=1$; otherwise, $\E[(F(x^*,\xi)-y^*)_+^2]>0$ first yields $z^*>0$ (see~\eqref{def:yzstar}), and hence by using~\eqref{chainrule2}, we obtain
\begin{align*}
    \Tilde{L}'_y(x^*,y^*,z^*,\lambda^*)=-\tfrac{2c\E\left[(F(x^*,\xi)-y^*)_+\right]}{z^*}+1-\lambda^*=0,
\end{align*}
which, in view of the facts that $c\leq 1$ and $z^*=2\E^{1/2}[(F(x^*,\xi)-y^*)_+^2]\geq 2\E[(F(x^*,\xi)-y^*)_+]$ by~\eqref{def:yzstar}, implies
\begin{align*}
    \lambda^*=\tfrac{z^*-2c\E\left[(F(x^*,\xi)-y^*)_+\right]}{z^*}\in [0,1].
\end{align*}
Thus we define the feasible region for $\lambda$ as $\Lambda:=[0,1]$. Moreover, there exists a certain relationship between the optimality gaps of problem~\eqref{l2problem} and problem~\eqref{eq4form}. For any $\Bar{x}\in X$, $\Bar{y}\in\R$, $\Bar{z}>0$ and $\Bar{\lambda}\in\Lambda$, first we have
\begin{align}
    h(x^*)=\lim_{z\to z^*}L(x^*,y^*,z,0)=\lim_{z\to z^*}L(x^*,y^*,z,\Bar{\lambda})\geq\inf_{x\in X,y,z>0}L(x,y,z,\Bar{\lambda}), \label{eq:lower_h}
\end{align}
where the first equality follows from $h(x^*)=\lim_{z\to z^*}\phi(x^*,y^*,z)$ by Lemma~\ref{lem:phi} and $\lim_{z\to z^*}\phi(x^*,y^*,z)=\lim_{z\to z^*}L(x^*,y^*,z,0)$, and the second equality follows from $y^*=\E[F(x^*,\xi)]$ by~\eqref{def:yzstar}. Moreover, we can establish an upper bound on $h(\Bar{x})$ by considering two cases. When $\Bar{y}\geq \E[F(\Bar{x},\xi)]$, $\max_{\lambda\in\Lambda}L(\Bar{x},\Bar{y},\Bar{z},\lambda)$ is attained at $\lambda=0$, then
\begin{align*}
    h(\Bar{x})=\inf_{y\geq\E[F(\Bar{x},\xi)],z>0}L(\Bar{x},y,z,0)\leq L(\Bar{x},\Bar{y},\Bar{z},0)=\max_{\lambda\in\Lambda}L(\Bar{x},\Bar{y},\Bar{z},\lambda),
\end{align*}
where the first equality follows from $h(\bar{x})=\inf_{y\geq\E[F(\Bar{x},\xi)],z>0}\phi(\bar{x},y,z)$ by Lemma~\ref{lem:phi} and $\phi(\Bar{x},y,z)=L(\Bar{x},y,z,0)$. When $\Bar{y}<\E[F(\Bar{x},\xi)]$, $\max_{\lambda\in\Lambda}L(\Bar{x},\Bar{y},z,\lambda)$ is attained at $\lambda=1$, then
\begin{align*}
    h(\Bar{x})&=\E[F(\Bar{x},\xi)]+c\E^{\tfrac{1}{2}}\left[(F(\Bar{x},\xi)-\E[F(\Bar{x},\xi)])_+^2\right]\leq\E[F(\Bar{x},\xi)]+c\E^{\tfrac{1}{2}}\left[(F(\Bar{x},\xi)-\Bar{y})_+^2\right] \\
    &=\Bar{y}+c\E^{\tfrac{1}{2}}\left[(F(\Bar{x},\xi)-\Bar{y})_+^2\right]+(\E[F(\Bar{x},\xi)]-\Bar{y})=\inf_{z>0}L(\Bar{x},\Bar{y},z,1)\leq L(\Bar{x},\Bar{y},\Bar{z},1)=\max_{\lambda\in\Lambda}L(\Bar{x},\Bar{y},\Bar{z},\lambda),
\end{align*}
where the first inequality holds because $\E^{1/2}[(F(\Bar{x},\xi)-\Bar{y})_+^2]$ is monotonically non-increasing with respect to $\Bar{y}$, and the third equality follows from $\Phi(\Bar{x},\Bar{y})=\inf_{z>0}\phi(\Bar{x},\Bar{y},z)$ by Lemma~\ref{lem:phi} and $\phi(\Bar{x},\Bar{y},z)+(\E[F(\Bar{x},\xi)]-\Bar{y})=L(\Bar{x},\Bar{y},z,1)$. Therefore, in both cases we have
\begin{align}
    h(\Bar{x})\leq\max_{\lambda\in\Lambda}L(\Bar{x},\Bar{y},\Bar{z},\lambda).\label{eq:upper_h}
\end{align}
Combining \eqref{eq:lower_h} and \eqref{eq:upper_h}, we have
\begin{align*}
    h(\Bar{x})-h(x^*)\leq \max_{\lambda\in\Lambda}L(\Bar{x},\Bar{y},\Bar{z},\lambda)-\inf_{x\in X,y\in\R,z>0}L(x,y,z,\Bar{\lambda}).
\end{align*}

\subsubsection{Guarantee on Lipschitz Continuity for Lagrange Function}

The Lipschitz continuity for the objective function $L(x,y,z,\lambda)$ plays an important role in the analysis of various stochastic first-order methods for solving the convex-concave stochastic saddle point problem~\eqref{eq4form}. Recall that $L(x,y,z,\lambda)=\E[\cL(x,y,z,\lambda,\xi)]$, and let $\cL'_x(x,y,z,\lambda,\xi),\cL'_y(x,y,z,\lambda,\xi),\cL'_z(x,y,z,\lambda,\xi)$ and $\cL'_\lambda(x,y,z,\lambda,\xi)$ be the subgradients of $\cL(x,y,z,\lambda)$ with respect to $x,y,z$ and $\lambda$, respectively. Then we define the vector
\begin{align}
    \cL'(x,y,z,\lambda,\xi):=\left[
    \begin{array}{c}
    \cL'_x(x,y,z,\lambda,\xi) \\
    \cL'_y(x,y,z,\lambda,\xi) \\
    \cL'_z(x,y,z,\lambda,\xi) \\
    -\cL'_\lambda(x,y,z,\lambda,\xi)
    \end{array}
    \right]=\left[
    \begin{array}{c}
    \tfrac{2c}{z}(F(x,\xi)-y)_+F'(x,\xi)+\lambda F'(x,\xi) \\
    -\tfrac{2c}{z}(F(x,\xi)-y)_++1-\lambda \\
    -\tfrac{c}{z^2}(F(x,\xi)-y)_+^2+\tfrac{c}{4} \\
    -F(x,\xi)+y
    \end{array}
    \right],
    \label{eq:subgrad1}
\end{align}
where the calculus of the subgradients applies the chain rule (see~\cite[Theorem 2.3.9]{clarke1990optimization} and~\cite[Theorem 4.3.1]{hiriart2004fundamentals}). Note that $[\cL'_x(x,y,z,\lambda,\xi);\cL'_y(x,y,z,\lambda,\xi);\cL'_z(x,y,z,\lambda,\xi)]$ represents a subgradient vector of $\cL$ with respect to $(x,y,z)$. To verify this, for any $\lambda\in\Lambda$, we first define $\Tilde{\cL}^\lambda(F,y,z):=(c/z)(F-y)_+^2+y+(c/4)z+\lambda(F-y)$ on $\R\times\R\times(0,\infty)$, so that $\Tilde{\cL}^\lambda(F(x,\xi),y,z)=\cL(x,y,z,\lambda,\xi)$. By the differentiability of $\Tilde{\cL}^\lambda$, its gradient at any $(F,y,z)$ is $[(2c/z)(F-y)_++\lambda;-(2c/z)(F-y)_++1-\lambda;-(c/z^2)(F-y)_+^2+c/4]$. Hence, for any $x,x'\in X,y,y'\in\R,z,z'>0$ and $\xi\in\Xi$, we have
\begin{align}
    &\cL(x',y',z',\lambda,\xi)-\cL(x,y,z,\lambda,\xi) = \Tilde{\cL}^\lambda(F(x',\xi),y',z')-\Tilde{\cL}^\lambda(F(x,\xi),y,z) \nn\\
    &\geq \left[\tfrac{2c}{z}(F(x,\xi)\!-\!y)_+\!+\!\lambda\right]\!(F(x',\xi)\!-\!F(x,\xi))\!+\!\left[-\tfrac{2c}{z}(F(x,\xi)\!-\!y)_+\!+\!1\!-\!\lambda\right]\!(y'\!-\!y)\!+\!\left[-\tfrac{c}{z^2}(F(x,\xi)\!-\!y)_+^2\!+\!\tfrac{c}{4}\right]\!(z'\!-\!z) \nn\\
    &\geq \left[\tfrac{2c}{z}(F(x,\xi)\!-\!y)_+\!+\!\lambda\right]\!F'(x,\xi)^{\top}(x'\!-\!x)\!+\!\left[-\tfrac{2c}{z}(F(x,\xi)\!-\!y)_+\!+\!1\!-\!\lambda\right]\!(y'\!-\!y)\!+\!\left[-\tfrac{c}{z^2}(F(x,\xi)\!-\!y)_+^2\!+\!\tfrac{c}{4}\right]\!(z'\!-\!z),
    \label{cL_joint_subgrad}
\end{align}
where the first inequality uses the convexity of $\Tilde{\cL}^\lambda$ (see the discussion below~\eqref{eq2form}), and the second inequality utilizes $(2c/z)(F(x,\xi)-y)_++\lambda\geq 0$ by $z>0$ and $\lambda\in\Lambda$, as well as the convexity of $F(x,\xi)$ with respect to $x$. As a consequence of~\eqref{cL_joint_subgrad}, $[\cL'_x(x,y,z,\lambda,\xi);\cL'_y(x,y,z,\lambda,\xi);\cL'_z(x,y,z,\lambda,\xi)]$ is a subgradient vector of $\cL$ with respect to $(x,y,z)$. 
In view of~\eqref{eq:subgrad1}, the size of the vector $L'(x,y,z,\lambda):=\E[\cL'(x,y,z,\lambda,\xi)]$ blows up as $z \to 0$ or $y \to \pm \infty$, under which case there does not exist a bounded Lipschitz constant for the Lagrange function $L(x,y,z,\lambda)$. As such, we need to properly modify the feasible regions for $y$ and $z$ in order to show the Lipschitz continuity of $L$.

\paragraph{Boundedness for Domain of Variable $y$}

In view of~\eqref{eq:subgrad1}, the size of $L'(x,y,z,\lambda)$ blows up as $y\to\pm\infty$. Therefore, we hope to find a bounded feasible region for $y$ that contains $y^*=\E[F(x^*,\xi)]$. Suppose that a feasible point $x_0\in X$ is known. Using the boundedness of $X$ and the Lipschitz continuity of the function $f(x)=\E[F(x,\xi)]$, for any $x\in X$, we have
\begin{align}
    \left|f(x)-f(x_0)\right|\leq L_f\|x-x_0\|_X\leq L_fD_X.
    \label{eq:yscope1}
\end{align}
Therefore, for any $x\in X$, $f(x)$ lies within the set $Y:=\{y\mid|y-f(x_0)|\leq L_fD_X\}$. However, we are only able to obtain an unbiased estimation of $f(x_0)$. Since the variance of $F(x,\xi)$ is bounded for any $x\in X$, the estimation error is under control. Specifically, for any $\delta>0$, if we estimate $f(x_0)$ using
\begin{align*}
    \hat{f}(x_0)=\tfrac{1}{m}\tsum_{i=1}^mF(x_0,\xi_i)
\end{align*}
where $m\in\N_+$ and $\xi_i,i=1,...,m$ are independent samples, then according to~\eqref{assum:fbddvar} and Chebyshev's inequality,
\begin{align}
    \P\left(\left|\hat{f}(x_0)-f(x_0)\right|>\delta\right)\leq\tfrac{\beta^2}{m\delta^2},
    \label{eq:yscope2}
\end{align}
and it follows from~\eqref{eq:yscope1} and~\eqref{eq:yscope2} that
\begin{align}
    \P\left(\left|f(x)-\hat{f}(x_0)\right|\leq L_fD_X+\delta, \ \forall x\in X\right)\geq 1-\tfrac{\beta^2}{m\delta^2}.
    \label{eq:yscope3}
\end{align}
Without loss of generality, we pick $\delta=1$ and define the set
\begin{align}
    Y:=\left\{y:\left|y-\hat{f}(x_0)\right|\leq L_fD_X+1\right\},
    \label{def:ydelta}
\end{align}
then for any $\alpha_0\in (0,1)$, taking $m\geq\beta^2/\alpha_0$,~\eqref{eq:yscope3} implies that with probability at least $1-\alpha_0$, we have
\begin{align}
    f(x)=\E[F(x,\xi)]\in Y \quad \forall x\in X.
    \label{eq:Ycontainy}
\end{align}
Moreover, in view of~\eqref{def:ydelta}, the boundedness of $Y$ is given by
\begin{align}
    |y_1-y_2|\leq 2(L_fD_X+1) \quad \forall y_1,y_2\in Y.
    \label{eq:ybound}
\end{align}

Combining the boundedness of the variables $\lambda$ and $y$, we conclude that with probability at least $1-\alpha_0$, problem~\eqref{eq4form} is equivalent to the following problem:
\begin{align}
    \inf_{x\in X,y\in Y,z>0}\max_{\lambda\in\Lambda}L(x,y,z,\lambda):=\tfrac{c}{z}\E\left[(F(x,\xi)-y)_+^2\right]+y+\tfrac{c}{4}z+\lambda (\E[F(x,\xi)]-y).
    \label{eq:l2equiform}
\end{align}

Since the estimation of $Y$ is conducted independently prior to solving the saddle point problem~\eqref{eq4form}, we henceforth condition on the event~\eqref{eq:Ycontainy}.

\paragraph{Closedness for Domain of Variable $z$}

In problem~\eqref{eq:l2equiform}, the domain $(0,+\infty)$ for the variable $z$ is still open, and the size of $L'(x,y,z,\lambda)$ blows up as $z\to 0$. Since $z^*$ can be any nonnegative value depending on the problem instance, it is impossible in general to find a positive lower bound for $z^*$. However, by adding a small enough positive lower bound for $z$, we can still maintain approximate optimality for problem~\eqref{eq:l2equiform} and thus for problem~\eqref{l2problem}. Specifically, we consider the following problem
\begin{align}
    \begin{cases}
        \min_{x\in X,y\in Y,z\geq\epsilon} \ &\phi(x,y,z)=\tfrac{c}{z}\E\left[(F(x,\xi)-y)_+^2\right]+y+\tfrac{c}{4}z \\
        \mathrm{s.t.} \ &\E[F(x,\xi)]-y\leq 0.
    \end{cases}
    \label{eq5form}
\end{align}
Since the Slater's condition is satisfied for problem~\eqref{eq5form}, it is equivalent to the following minimax problem
\begin{align}
    \min_{x\in X,y\in Y,z\geq\epsilon}\max_{\lambda\in\Lambda}L(x,y,z,\lambda)=\tfrac{c}{z}\E\left[(F(x,\xi)-y)_+^2\right]+y+\tfrac{c}{4}z+\lambda(\E[F(x,\xi)]-y).
    \label{eq6form}
\end{align}
Let $(x_\epsilon^*,y_\epsilon^*,z_\epsilon^*,\lambda_\epsilon^*)$ denote the optimal saddle point for problem~\eqref{eq6form}.
The result below shows the approximate optimality of $x_\epsilon^*$ for problem~\eqref{l2problem} in terms of $h(x^*_\epsilon) - h(x^*)$, and provides two different ways to upper bound the optimality gap $h(\bar x) - h(x^*)$.
\begin{lemma}
    Let $x^*$ be optimal for problem~\eqref{l2problem}, and $(x_\epsilon^*,y_\epsilon^*,z_\epsilon^*,\lambda_\epsilon^*)$ be an optimal saddle point for problem~\eqref{eq6form} for some $\epsilon >0$. Then, we have
    $$h(x_\epsilon^*)-h(x^*)\leq\tfrac{c}{2}\epsilon.$$
    Moreover, for any feasible solution
    $(\bar x, \bar y, \bar z)$ of problem~\eqref{eq5form}, we have
    $$h(\Bar{x})-h(x^*)<\left[\phi(\Bar{x},\Bar{y},\Bar{z})-\phi(x_\epsilon^*,y_\epsilon^*,z_\epsilon^*)\right]+\tfrac{c}{2}\epsilon.$$
    In addition, for any $\Bar{x}\in X$, $\Bar{y} \in \mathbb{R}$, $\Bar{z}\geq\epsilon$ and $\Bar{\lambda}\in\Lambda$, we have
    $$h(\Bar{x})-h(x^*)<\left[\max_{\lambda\in\Lambda}L(\Bar{x},\Bar{y},\Bar{z},\lambda)-\min_{x\in X,y\in Y,z\geq\epsilon}L(x,y,z,\Bar{\lambda})\right]+\tfrac{c}{2}\epsilon.$$
    \label{lem2form}
\end{lemma}
\begin{proof}
    We first consider the situation 
    when $z^*\geq\epsilon$. In this case, the optimal solution $(x^*,y^*,z^*)$ for problem~\eqref{eq3form} is feasible for problem~\eqref{eq5form}, thus  $(x_\epsilon^*,y_\epsilon^*,z_\epsilon^*)=(x^*,y^*,z^*)$ and $h(x_\epsilon^*)-h(x^*)=0$. Moreover, for any $\Bar{x}\in X$, $\bar{y}\in Y,\Bar{y}\geq\E[F(\Bar{x},\xi)]$ and $\Bar{z}\geq\epsilon$,
    \begin{align*}
        h(\Bar{x})-h(x^*)&=\left[h(\Bar{x})-\phi(x_\epsilon^*,y_\epsilon^*,z_\epsilon^*)\right]+\left[\phi(x_\epsilon^*,y_\epsilon^*,z_\epsilon^*)-h(x^*)\right] \\
        &=\left[\inf_{y\geq\E[F(\Bar{x},\xi)],z>0}\phi(\Bar{x},y,z)-\phi(x_\epsilon^*,y_\epsilon^*,z_\epsilon^*)\right]+[\phi(x^*,y^*,z^*)-h(x^*)] \\
        &\leq\phi(\Bar{x},\Bar{y},\Bar{z})-\phi(x_\epsilon^*,y_\epsilon^*,z_\epsilon^*),
    \end{align*}
    where the second equality and the inequality follow from Lemma~\ref{lem:phi}. In addition, for any $\Bar{x}\in X$, $\Bar{y}\in Y$, $\Bar{z}\geq\epsilon$ and $\Bar{\lambda}\in\Lambda$,
    \begin{align*}
        h(\Bar{x})-h(x^*)&=\left[h(\Bar{x})-L(x^*,y^*,z^*,\Bar{\lambda})\right]+\left[\phi(x^*,y^*,z^*)-h(x^*)\right] \\
        &\leq\max_{\lambda\in\Lambda}L(\Bar{x},\Bar{y},\Bar{z},\lambda)-\min_{x\in X,y\in Y,z\geq\epsilon}L(x,y,z,\Bar{\lambda}),
    \end{align*}
    where the equality follows from $L(x^*,y^*,z^*,\Bar{\lambda})=\phi(x^*,y^*,z^*)$ since $y^*=\E[F(x^*,\xi)]$ by~\eqref{def:yzstar}, and the inequality follows from~\eqref{eq:upper_h} and Lemma~\ref{lem:phi}.
    
    Now we consider the situation when $z^*<\epsilon$. In this case we have
    \begin{align}
        h(x_\epsilon^*)=\inf_{y\geq\E[F(x_\epsilon^*,\xi)],z>0}\phi(x_\epsilon^*,y,z)\leq \phi(x_\epsilon^*,y_\epsilon^*,z_\epsilon^*)\leq \phi(x^*,y^*,\epsilon),
        \label{eq7form}
    \end{align}
    where the equality follows from Lemma~\ref{lem:phi}, the first inequality holds due to the feasibility of $(x_\epsilon^*,y_\epsilon^*,z_\epsilon^*)$ for problem~\eqref{eq5form}, and the second inequality uses the optimality of $(x_\epsilon^*,y_\epsilon^*,z_\epsilon^*)$ and the feasibility of $(x^*,y^*,\epsilon)$ for problem~\eqref{eq5form}. Therefore
    \begin{align}
        h(x_\epsilon^*)-h(x^*)&\leq \phi(x^*,y^*,\epsilon)-h(x^*)\leq\phi(x^*,y^*,\epsilon)-\lim_{z\to z^*}\phi(x^*,y^*,z) \nonumber\\
        &=\tfrac{cz^{*2}}{4\epsilon}+y^*+\tfrac{c}{4}\epsilon-\tfrac{cz^*}{4}-y^*-\tfrac{c}{4}z^*\leq\tfrac{cz^{*2}}{4\epsilon}+\tfrac{c}{4}\epsilon<\tfrac{c}{2}\epsilon,
        \label{eq8form}
    \end{align}
    where the first inequality applies~\eqref{eq7form}, the second inequality follows from Lemma~\ref{lem:phi}, and the first equality utilizes $z^*=2\E^{1/2}[(F(x^*,\xi)-y^*)_+^2]$ by~\eqref{def:yzstar}. Moreover, for any $\Bar{x}\in X$, $\Bar{y}\in Y, \bar{y}\geq\E[F(\Bar{x},\xi)]$ and $\Bar{z}\geq\epsilon$, 
    \begin{align*}
        h(\Bar{x})-h(x^*)&=\left[h(\Bar{x})-\phi(x_\epsilon^*,y_\epsilon^*,z_\epsilon^*)\right]+\left[\phi(x_\epsilon^*,y_\epsilon^*,z_\epsilon^*)-h(x^*)\right]<\left[\inf_{y\geq\E[F(\Bar{x},\xi)],z>0}\phi(\Bar{x},y,z)-\phi(x_\epsilon^*,y_\epsilon^*,z_\epsilon^*)\right]+\tfrac{c}{2}\epsilon \\
        &\leq\left[\phi(\Bar{x},\Bar{y},\Bar{z})-\phi(x_\epsilon^*,y_\epsilon^*,z_\epsilon^*)\right]+\tfrac{c}{2}\epsilon,
    \end{align*}
    where the first inequality follows from Lemma~\ref{lem:phi} and $\phi(x_\epsilon^*,y_\epsilon^*,z_\epsilon^*)-h(x^*)\leq\phi(x^*,y^*,\epsilon)-h(x^*)<c\epsilon/2$ by using~\eqref{eq7form} and~\eqref{eq8form}. In addition, for any $\Bar{x}\in X$, $\Bar{y}\in Y$, $\Bar{z}\geq\epsilon$ and $\Bar{\lambda}\in\Lambda$,
    \begin{align*}
        h(\Bar{x})-h(x^*)\!=\!\left[h(\Bar{x})\!-\!L(x^*,y^*,\epsilon,\Bar{\lambda})\right]\!+\!\left[\phi(x^*,y^*,\epsilon)\!-\!h(x^*)\right]\!<\!\left[\max_{\lambda\in\Lambda}L(\Bar{x},\Bar{y},\Bar{z},\lambda)\!-\!\min_{x\in X,y\in Y,z\geq\epsilon}\!L(x,y,z,\Bar{\lambda})\right]\!+\!\tfrac{c}{2}\epsilon,
    \end{align*}
    where the equality follows from $L(x^*,y^*,\epsilon,\Bar{\lambda})=\phi(x^*,y^*,\epsilon)$ since $y^*=\E[F(x^*,\xi)]$ by~\eqref{def:yzstar}, and the inequality follows from~\eqref{eq:upper_h} and~\eqref{eq8form}.
\end{proof}
\vgap

Lemma~\ref{lem2form} indicates that we can obtain an $\epsilon$-optimal solution for problem~\eqref{l2problem} by approximately solving problem~\eqref{eq5form} or~\eqref{eq6form}. 
It is worth noting that since $z \ge \epsilon > 0$,
in view of~\eqref{eq:subgrad1},
we can bound the Lipschitz constant of
for the Lagrange function $L(x,y,z,\lambda)$ for problem~\eqref{eq6form}
in the order of $O(\epsilon^{-2})$.

\subsubsection{Computation of Second Moment Bound for Stochastic Subgradients}
\label{subsec:lips}

Based on our previous analysis, we focus on solving problem~\eqref{eq6form}, i.e.,
\begin{align}
    \min_{x\in X,y\in Y,z\geq\epsilon}\max_{\lambda\in\Lambda}L(x,y,z,\lambda)=\tfrac{c}{z}\E\left[(F(x,\xi)-y)_+^2\right]+y+\tfrac{c}{4}z+\lambda(\E[F(x,\xi)]-y).
    \label{eq9form}
\end{align}

Here, we equip $\R^n$ with the norm $\|\cdot\|_X$ and $\R$ with the Euclidean norm $\|\cdot\|_2$, then $\R^n\times\R\times\R\times\R$ is equipped with the norm $\|\cdot\|$ that satisfies
\begin{align}
    \|(x,y,z,\lambda)\|=\sqrt{\|x\|_X^2+y^2+z^2+\lambda^2},
    \label{def:norm1}
\end{align}
for any $x\in\R^n,y,z,\lambda\in\R$, and the dual norm $\|\cdot\|_*$ satisfies
\begin{align*}
    \|(\zeta_x,\zeta_y,\zeta_z,\zeta_\lambda)\|_*=\sqrt{\|\zeta_x\|_{*,X}^2+\zeta_y^2+\zeta_z^2+\zeta_\lambda^2}
\end{align*}
for any $\zeta_x\in\R^n,\zeta_y,\zeta_z,\zeta_\lambda\in\R$.

In order to compute an upper bound for $\E[\|\cL'(x,y,z,\lambda,\xi)\|_*^2]$ (see the expression of $\cL'(x,y,z,\lambda,\xi)$ in~\eqref{eq:subgrad1}), we make the following two assumptions.
\begin{assumption}
    $\E[(F(x,\xi)-\E[F(x,\xi)])_+^4]\leq M_f^4$ for any $x\in X$.
    \label{assumption1}
\end{assumption}
Assumption~\ref{assumption1} is satisfied when the fourth central moment $\E[(F(x,\xi)-\E[F(x,\xi)])^4]$ is bounded, which can also be implied by the sub-Gaussian (light-tail) assumption $\E[\exp\{(F(x,\xi)-\E[F(x,\xi)])^4/\sigma^4\}]\leq\exp\{1\}$ for some $\sigma^4>0$ and any $x\in X$, since
\begin{align*}
    \exp\{1\}&\geq\E\left[\exp\left\{(F(x,\xi)-\E[F(x,\xi)])^4/\sigma^4\right\}\right]\geq\exp\left\{\E\left[(F(x,\xi)-\E[F(x,\xi)])^4\right]\Big/\sigma^4\right\},
\end{align*}
where the second inequality follows from Jensen's inequality, and taking logarithms on both sides yields $\E[(F(x,\xi)-\E[F(x,\xi)])^4]\leq\sigma^4$. Note that many random variables satisfy the sub-Gaussian assumption, including Gaussian, mean-zero or symmetric Bernoulli, uniform and any random variable with bounded support. Assumption~\ref{assumption1} will be used to bound the term $\E[\|\cL'_z(x,y,z,\lambda,\xi)\|_*^2]$, as shown in~\eqref{eq:Lzgradbound} in the proof of Lemma~\ref{lem:Lipsboundnum} below.

\begin{assumption}
    Define $G(x,\xi):=(F(x,\xi)-\E[F(x,\xi)])_+^2$, then $\E[\|G'(x,\xi)\|_{*,X}^2]\leq L_G^2$ for any $x\in X$.
    \label{assumption2}
\end{assumption}
Assumption~\ref{assumption2} requires Lipschitz continuity for the function $\E[G(x,\xi)]$, which represents the upper semivariance of $F(x,\xi)$. Using Young's inequality, we have
\begin{align*}
    \E\left[\left\|G'(x,\xi)\right\|_{*,X}^2\right]&=\E\left[\left\|2\left(F(x,\xi)-\E[F(x,\xi)]\right)_+\left(F'(x,\xi)-\E[F'(x,\xi)\right)\right\|_{*,X}^2\right] \\
    &\leq 2\left\{\E\left[(F(x,\xi)-\E[F(x,\xi)])_+^4\right]+\E\left[\left\|F'(x,\xi)-\E[F'(x,\xi)]\right\|_{*,X}^4\right]\right\},
\end{align*}
thus Assumption~\ref{assumption2} is satisfied when Assumption~\ref{assumption1} holds and $\E[\|F'(x,\xi)-\E[F'(x,\xi)]\|_{*,X}^4]$ is bounded. Note that the boundedness of $\E[\|F'(x,\xi)-\E[F'(x,\xi)]\|_{*,X}^4]$ can also be implied by the sub-Gaussian assumption $\E[\exp\{\|F'(x,\xi)-\E[F'(x,\xi)]\|_{*,X}^4/\sigma^4\}]\leq\exp\{1\}$ for some $\sigma^4>0$ and any $x\in X$, since
\begin{align*}
    \exp\{1\}&\geq\E\left[\exp\left\{\left\|F'(x,\xi)-\E[F'(x,\xi)]\right\|_{*,X}^4\big/\sigma^4\right\}\right]\geq\exp\left\{\E\left[\left\|F'(x,\xi)-\E[F'(x,\xi)]\right\|_{*,X}^4\right]\Big/\sigma^4\right\},
\end{align*}
where the second inequality follows from Jensen's inequality, and taking logarithms on both sides yields $\E[\|F'(x,\xi)-\E[F'(x,\xi)]\|_{*,X}^4]\leq\sigma^4$. Assumption~\ref{assumption2} will be utilized to bound the term $\E[\|\cL'_x(x,y,z,\lambda,\xi)\|_{*,X}^2]$, which can be seen in~\eqref{eq:Lxgradbound} in the proof of Lemma~\ref{lem:Lipsboundnum} below.

Based on Assumptions~\ref{assumption1} and~\ref{assumption2}, Lemma~\ref{lem:Lipsboundnum} below provides upper bounds for the components of $\E[\|\cL'(x,y,z,\lambda,\xi)\|_*^2]$.
\begin{lemma}
    Conditioning on~\eqref{eq:Ycontainy}, for any $x\in X,y\in Y,z>0$ and $\lambda\in\Lambda$, we have:
    \begin{align*}
        &\textit{(a)} \ \E\left[\left(\cL'_y(x,y,z,\lambda,\xi)\right)^2\right]\leq\tfrac{12c^2}{z^2}\left[\beta^2+4(L_fD_X+1)^2\right]+3; \\
        &\textit{(b)} \ \E\left[\left(\cL'_\lambda(x,y,z,\lambda,\xi)\right)^2\right]\leq 2\beta^2+8(L_fD_X+1)^2;
    \end{align*}
    under Assumption~\ref{assumption1} we have:
    \begin{align*}
        &\textit{(c)} \ \E\left[\left(\cL'_z(x,y,z,\lambda,\xi)\right)^2\right]\leq\tfrac{12c^2}{z^4}\left[M_f^4+16(L_fD_X+1)^4\right]+\tfrac{3c^2}{16};
    \end{align*}
    and under Assumption~\ref{assumption2} we have:
    \begin{align*}
        &\textit{(d)} \ \E\left[\left\|\cL'_x(x,y,z,\lambda,\xi)\right\|_{*,X}^2\right]\leq\tfrac{3c^2}{z^2}\left[L_G^2+4L_f^2\beta^2+64(L_f^2+\sigma_f^2)(L_fD_X+1)^2\right]+12(L_f^2+\sigma_f^2).
    \end{align*}
    \label{lem:Lipsboundnum}
\end{lemma}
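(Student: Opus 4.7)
The plan is to bound each of the four components of $\cL'(x,y,z,\lambda,\xi)$ in~\eqref{eq:subgrad1} by combining three standard tools: the power-mean inequality $(a_1+\cdots+a_k)^2 \le k \sum_{i} a_i^2$ to split each squared subgradient into a few manageable pieces; the decomposition
\[
F(x,\xi) - y \;=\; \bigl[F(x,\xi) - f(x)\bigr] + \bigl[f(x) - y\bigr],
\]
which separates the stochastic fluctuation from the deterministic gap; and the bounds we already have on hand, namely $\lambda \in [0,1]$, $|f(x) - y| \le 2(L_fD_X+\delta)$ from $y, f(x) \in Y_\delta$ together with~\eqref{eq:ybound}, the variance bound $\beta^2$ in~\eqref{assum:fbddvar}, the gradient variance $\sigma_f^2$ in~\eqref{assum:fgradbddvar}, and the Lipschitz bound $\|f'(x)\|_\star \le L_f$ from~\eqref{assum:flipschitz}.

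Parts (a), (b), and (d) are scalar and can be carried out in parallel. For (b), since $\cL'_\lambda = y - F(x,\xi)$, I would apply $(a+b)^2 \le 2(a^2+b^2)$ to the decomposition above and take expectations, which directly yields $\E[(F(x,\xi)-y)^2] \le 2\beta^2 + 8(L_fD_X+\delta)^2$. For (a), I would split $\cL'_y = -\tfrac{2c}{z}(F(x,\xi)-y)_+ + 1 - \lambda$ by a three-term power-mean inequality, reuse the bound $\E[(F(x,\xi)-y)_+^2] \le \E[(F(x,\xi)-y)^2]$ from (b), and control $\lambda^2 \le 1$. For (d), the same strategy applies to $\cL'_z = -\tfrac{c}{z^2}(F(x,\xi)-y)_+^2 + \tfrac{c}{4}$, except that a fourth-moment estimate is required: from $(F(x,\xi)-y)_+ \le (F(x,\xi)-f(x))_+ + |f(x)-y|$ combined with $(a+b)^4 \le 8(a^4+b^4)$, I obtain $\E[(F(x,\xi)-y)_+^4] \le 8 M_f^4 + 128(L_fD_X+\delta)^4$, where the centered fourth moment of the positive part is controlled by Assumption~\ref{assumption1}. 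The factors $1/z^2$ and $1/z^4$ then come directly from the coefficients in $\cL'_y$ and $\cL'_z$.

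The main obstacle is part (c), because $\cL'_x$ carries the product $(F(x,\xi)-y)_+\, F'(x,\xi)$, a product of two random quantities; a naive Cauchy--Schwarz step would require a fourth-moment bound on $\|F'(x,\xi)\|_\star$, which the assumptions do not supply. The key device is to reintroduce the Clarke subgradient of $G$ through the identity
\[
(F(x,\xi)-y)_+\, F'(x,\xi) \;=\; \tfrac{1}{2}G'(x,\xi) + (F(x,\xi)-f(x))_+\, f'(x) + \bigl[(F(x,\xi)-y)_+ - (F(x,\xi)-f(x))_+\bigr] F'(x,\xi),
\]
using $G'(x,\xi) = 2(F(x,\xi)-f(x))_+\bigl(F'(x,\xi)-f'(x)\bigr)$. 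Squaring via the power-mean inequality and taking expectations, the first summand is controlled by $L_G^2$ through Assumption~\ref{assumption2}; the second by $\|f'(x)\|_\star^2 \le L_f^2$ together with $\E[(F(x,\xi)-f(x))_+^2] \le \beta^2$; and the third by the nonexpansive bound $|(F(x,\xi)-y)_+ - (F(x,\xi)-f(x))_+| \le |f(x)-y| \le 2(L_fD_X+\delta)$ combined with $\E[\|F'(x,\xi)\|_\star^2] \le 2(\sigma_f^2 + L_f^2)$, which follows from~\eqref{assum:fgradbddvar} and $\|f'(x)\|_\star \le L_f$. Finally the $\lambda F'(x,\xi)$ piece in $\cL'_x$ is absorbed using $\lambda^2 \le 1$ and the same bound on $\E[\|F'(x,\xi)\|_\star^2]$. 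Collecting the three pieces with one more application of the power-mean inequality delivers the stated $O(1/z^2)$ plus $O(1)$ estimate.
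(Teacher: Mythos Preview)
Your approach is essentially identical to the paper's: both rely on the decomposition $F(x,\xi)-y=[F(x,\xi)-f(x)]+[f(x)-y]$, repeated use of $(a_1+\cdots+a_k)^2\le k\sum_i a_i^2$, and---crucially for part~(c)---the rewriting of $(F(x,\xi)-f(x))_+F'(x,\xi)$ as $\tfrac12 G'(x,\xi)+(F(x,\xi)-f(x))_+f'(x)$ so that Assumption~\ref{assumption2} controls the product without any fourth-moment bound on $F'$. The paper combines your third piece $[(F-y)_+-(F-f)_+]F'$ with the $\lambda F'$ term before squaring (using the same $1$-Lipschitz bound $|(F-y)_+-(F-f)_+|\le|f-y|$), but this is the same decomposition.

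One small caveat on constants. In~(a) you propose to bound $\E[(F-y)_+^2]$ via part~(b) and then apply a three-term split to $(-\tfrac{2c}{z}(F-y)_+,\,1,\,-\lambda)$; that route gives $\tfrac{12c^2}{z^2}\bigl[2\beta^2+8(L_fD_X+\delta)^2\bigr]+6$, a factor of~$2$ off from the stated bound. To get the exact constants the paper first replaces $|1-\lambda|\le 1$ and $(F-y)_+\le(F-f)_++|f-y|$, and \emph{then} applies the three-term inequality to $\bigl(\tfrac{2c}{z}(F-f)_+,\,\tfrac{2c}{z}|f-y|,\,1\bigr)$. The same ordering issue arises in~(d): the paper splits $(F-y)_+^2\le 2(F-f)_+^2+2(f-y)^2$ \emph{before} squaring, rather than bounding $(F-y)_+^4$ directly via $(a+b)^4\le 8(a^4+b^4)$. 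These are cosmetic rearrangements; the method and all the ideas are the same.
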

\begin{proof}
First we denote $f(x)=\E[F(x,\xi)]$, then
\begin{align}
    \E\left[\left\|F'(x,\xi)\right\|_{*,X}^2\right]\leq 2\left\|f'(x)\right\|_{*,X}^2+2\E\left[\left\|F'(x,\xi)-f'(x)\right\|_{*,X}^2\right]\leq 2(L_f^2+\sigma_f^2),
    \label{eq:Flips}
\end{align}
where the first inequality applies Cauchy--Schwarz inequality and the second inequality follows from~\eqref{assum:flipschitz} and~\eqref{assum:fgradbddvar}. For the term $\E\left[(\cL'_y(x,y,z,\lambda,\xi))^2\right]$, for any $x\in X,y\in Y,z>0$ and $\lambda\in\Lambda$, we have
\begin{align}
    &\E\left[\left(\cL'_y(x,y,\lambda,\xi)\right)^2\right]=\E\left[\left(-\tfrac{2c}{z}(F(x,\xi)-y)_++1-\lambda\right)^2\right]\leq\E\left[\left(\tfrac{2c}{z}(F(x,\xi)-f(x))_++\tfrac{2c}{z}|y-f(x)|+1\right)^2\right] \nn\\
    &\leq\tfrac{12c^2}{z^2}\E\left[(F(x,\xi)-f(x))_+^2\right]+\tfrac{12c^2}{z^2}(y-f(x))^2+3\leq\tfrac{12c^2}{z^2}\beta^2+\tfrac{12c^2}{z^2}[2(L_fD_X+1)]^2+3 \nn\\
    &\leq\tfrac{12c^2}{z^2}\left[\beta^2+4(L_fD_X+1)^2\right]+3,
    \label{eq:Lygradbound}
\end{align}
where the first inequality uses $(F(x,\xi)-y)_+\leq (F(x,\xi)-f(x))_++|f(x)-y|$ and $\lambda\in[0,1]$, the second inequality applies Young's inequality, and the third inequality follows from~\eqref{assum:fbddvar},~\eqref{eq:Ycontainy} and~\eqref{eq:ybound}. For the term $\E[(\cL'_\lambda(x,y,z,\lambda,\xi))^2]$, for any $x\in X,y\in Y,z>0$ and $\lambda\in\Lambda$, we have
\begin{align}
    \E\left[\left(\cL'_\lambda(x,y,z,\lambda,\xi)\right)^2\right]=\E\left[(F(x,\xi)-y)^2\right]\leq2\E\left[(F(x,\xi)-f(x))^2\right]+2(f(x)-y)^2\leq 2\beta^2+8(L_fD_X+1)^2,
    \label{eq:Llamgradbound}
\end{align}
where the first inequality follows from Young's inequality, and the second inequality applies~\eqref{assum:fbddvar},~\eqref{eq:Ycontainy} and~\eqref{eq:ybound}. For the term $\E[(\cL'_z(x,y,z,\lambda,\xi))^2]$, for any $x\in X,y\in Y,z>0$ and $\lambda\in\Lambda$, we have
\begin{align}
    &\E\left[\left(\cL'_z(x,y,z,\lambda,\xi)\right)^2\right]=\E\left[\left(-\tfrac{c}{z^2}(F(x,\xi)-y)_+^2+\tfrac{c}{4}\right)^2\right]\leq\E\left[\left(\tfrac{2c}{z^2}(F(x,\xi)-f(x))_+^2+\tfrac{2c}{z^2}(f(x)-y)^2+\tfrac{c}{4}\right)^2\right] \nonumber\\
    &\leq\tfrac{12c^2}{z^4}\E\left[(F(x,\xi)-f(x))_+^4\right]+\tfrac{12c^2}{z^4}(f(x)-y)^4+\tfrac{3c^2}{16}\leq\tfrac{12c^2}{z^4}\left[M_f^4+16(L_fD_X+1)^4\right]+\tfrac{3c^2}{16},
    \label{eq:Lzgradbound}
\end{align}
where the first inequality uses $(F(x,\xi)-y)_+^2\leq 2(F(x,\xi)-f(x))_+^2+2(f(x)-y)^2$ since $(F(x,\xi)-y)_+\leq (F(x,\xi)-f(x))_++|f(x)-y|$, the second inequality applies Young's inequality, and the third inequality follows from Assumption~\ref{assumption1},~\eqref{eq:Ycontainy} and~\eqref{eq:ybound}. For the term $\E[\|\cL'_x(x,y,z,\lambda,\xi)\|_*^2]$, for any $x\in X,y\in Y,z>0$ and $\lambda\in\Lambda$, we have
\begin{align}
    &\E\left[\|\cL'_x(x,y,z,\lambda,\xi)\|_{*,X}^2\right]=\E\left[\left\|\tfrac{2c}{z}(F(x,\xi)-y)_+F'(x,\xi)+\lambda F'(x,\xi)\right\|_{*,X}^2\right] \nn\\
    &\leq\E\left[\left\|\tfrac{2c}{z}(F(x,\xi)-f(x))_+F'(x,\xi)+\tfrac{2c}{z}|y-f(x)|F'(x,\xi)+\lambda F'(x,\xi)\right\|_{*,X}^2\right] \nonumber\\
    &=\E\Bigg[\bigg\|\tfrac{2c}{z}(F(x,\xi)-f(x))_+(F'(x,\xi)-f'(x))+\tfrac{2c}{z}(F(x,\xi)-f(x))_+f'(x)+\left(\tfrac{2c}{z}|y-f(x)|+\lambda\right)F'(x,\xi)\bigg\|_{*,X}^2\Bigg] \nonumber\\
    &\leq\tfrac{3c^2}{z^2}\E\left[\left\|2(F(x,\xi)-f(x))_+(F'(x,\xi)-f'(x))\right\|_{*,X}^2\right]+\tfrac{12c^2}{z^2}\left\|f'(x)\right\|_{*,X}^2\E\left[(F(x,\xi)-f(x))_+^2\right] \nonumber\\
    & \quad +3\left(\tfrac{2c}{z}|y-f(x)|+1\right)^2\E\left[\left\|F'(x,\xi)\right\|_{*,X}^2\right] \nn\\
    &\leq\tfrac{3c^2}{z^2}L_G^2+\tfrac{12c^2}{z^2}L_f^2\beta^2+3\left[\tfrac{4c}{z}(L_fD_X+1)+1\right]^2\times 2(L_f^2+\sigma_f^2) \nn\\
    &\leq\tfrac{3c^2}{z^2}\left[L_G^2+4L_f^2\beta^2+64(L_f^2+\sigma_f^2)(L_fD_X+1)^2\right]+12(L_f^2+\sigma_f^2),
    \label{eq:Lxgradbound}
\end{align}
where the first inequality uses $(F(x,\xi)-y)_+\leq (F(x,\xi)-f(x))_++|f(x)-y|$, the second inequality utilizes Cauchy--Schwarz inequality, the third inequality follows from Assumption~\ref{assumption2},~\eqref{assum:flipschitz},~\eqref{assum:fbddvar},~\eqref{eq:Ycontainy},~\eqref{eq:ybound},~\eqref{eq:Flips} and $\lambda\in[0,1]$, and the fourth inequality applies Young's inequality.
\end{proof}

\vgap

Problem~\eqref{eq9form} is solvable by various optimization methods. For example, methods for solving a convex-concave stochastic saddle point problem include the stochastic mirror descent method~\cite{nemirovski2009robust},~\cite[Section 4.3]{lan2020first} and the stochastic accelerated mirror-prox method~\cite[Section 4.5]{lan2020first}. Moreover, problem~\eqref{eq9form} is equivalent to a convex functional constrained stochastic optimization problem, which can be solved by primal methods such as cooperative subgradient methods~\cite{lan2020algorithms} and level-set methods~\cite{lin2018level,lin2018level2}, as well as primal-dual methods such as~\cite{boob2023stochastic,yan2022adaptive}. Suppose that we apply the stochastic mirror descent method~\cite{nemirovski2009robust},~\cite[Section 4.3]{lan2020first} to solve problem~\eqref{eq9form}, in view of
\begin{align}
    \E\left[\left\|\cL'(x,y,z,\lambda,\xi)\right\|_*^2\right]=&\E\left[\left\|\cL'_x(x,y,z,\lambda,\xi)\right\|_{*,X}^2\right]+\E\left[\left(\cL'_y(x,y,z,\lambda,\xi)\right)^2\right]+\E\left[\left(\cL'_z(x,y,z,\lambda,\xi)\right)^2\right] \nn\\
    &+\E\left[\left(\cL'_\lambda(x,y,z,\lambda,\xi)\right)^2\right],
    \label{eq:Lipsbound}
\end{align}
then according to Lemma~\ref{lem:Lipsboundnum}, for any $x\in X,y\in Y,z\geq\epsilon$ and $\lambda\in\Lambda$,
\begin{align}
    \E\left[\left\|\cL'(x,y,z,\lambda,\xi)\right\|_*^2\right]\leq&\tfrac{12c^2}{\epsilon^4}\left[M_f^4+16(L_fD_X+1)^4\right]+2\beta^2+12(L_f^2+\sigma_f^2)+8(L_fD_X+1)^2+\tfrac{3c^2}{16}+3 \nn\\
    &+\tfrac{3c^2}{\epsilon^2}\left\{L_G^2+4(L_f^2+1)\beta^2+16\left[4(L_f^2+\sigma_f^2)+1\right](L_fD_X+1)^2\right\}.
    \label{eq:Lipsboundeps}
\end{align}

From above we see that the upper bound for $\E[\|\cL'(x,y,z,\lambda,\xi)\|_*^2]$ is in the order of $O(\epsilon^{-4})$, which will result in an overall $O(\epsilon^{-6})$ first-order oracle complexity for solving problem~\eqref{eq9form} and thus the mean-upper-semideviation problem~\eqref{l2problem}. Note that this complexity is much worse than the optimal complexity $O(\epsilon^{-2})$ for solving a convex stochastic optimization problem, as well as solving a convex stochastic nested optimization problem, such as the mean-upper-semideviation problem~\eqref{lpproblem} with $p=1$.
\section{Algorithm Design and Analysis}

In this section, we focus on efficiently solving the convex-concave stochastic saddle point problem~\eqref{eq9form}. 
As discussed earlier in Section~\ref{subsec:lips}, the key challenge of achieving a better complexity result stems from the fact that
the bound on $\E[\|\cL'(x,y,z,\lambda,\xi)\|_*^2]$ in~\eqref{eq:Lipsboundeps}
is quite large, given in the order of $\cO(\epsilon^{-4})$.
However, it might be possible to reduce this bound and thus the complexity of our algorithm through the following means. 

First, by Lemma~\ref{lem:Lipsboundnum}, when $z$ tends to zero,
the four components,
$\E[\|\cL'_x(x,y,z,\lambda,\xi)\|_{*,X}^2]$, $\E[(\cL'_y(x,y,z,\lambda,\xi))^2]$, $\E[(\cL'_z(x,y,z,\lambda,\xi))^2]$, and $\E[(\cL'_\lambda(x,y,z,\lambda,\xi))^2]$
appearing in \eqref{eq:Lipsbound} of $\E[\|\cL'(x,y,z,\lambda,\xi)\|_*^2]$ 
are bounded by $\cO(z^{-2})$,
$\cO(z^{-2})$, $\cO(z^{-4})$, and $\cO(1)$, respectively. This motivates us to develop algorithms that only weakly depends on the dominating third component $\E[(\cL'_z(x,y,z,\lambda,\xi))^2]$.

Second, let $z^*$ be defined in~\eqref{def:yzstar}. We may search over $z \ge \max\{z^*,\epsilon\}$, which according to Lemma~\ref{lem:Lipsboundnum}, guarantees a possibly smaller upper bound for $\E[\|\cL'(x,y,z,\lambda,\xi)\|_*^2]$ compared to searching over $z\geq\epsilon$. Notice that the value of $z^*$ is unknown and that it depends on the specific instance of problem~\eqref{l2problem}.

In view of the above discussion, by separating the one-dimensional variable $z$ from $x,y$ and $\lambda$, we consider the following equivalent min-min-max problem formulation to problem~\eqref{eq:l2equiform}:
\begin{align}
   \inf_{z>0}\left\{\psi(z):=\min_{x\in X,y\in Y}\max_{\lambda\in\Lambda}L(x,y,z,\lambda)\right\}.
    \label{l2minminmaxproblem}
\end{align}
Note that since $L(x,y,z,\lambda)$ is jointly convex with respect to $(x,y,z)$ for any $\lambda\in\Lambda$, $\max_{\lambda\in\Lambda}L(x,y,z,\lambda)$ is jointly convex with respect to $(x,y,z)$. Taking partial minimization of $\max_{\lambda\in\Lambda}L(x,y,z,\lambda)$ with respect to $(x,y)$, we can see that the objective function $\psi(z)$ in \eqref{l2minminmaxproblem} is convex. 
To find an approximate solution for problem~\eqref{l2minminmaxproblem}, we design a double-layer method in align with its min-min-max structure. 
Observe that we restrict $x,y$ and $\lambda$ in the same feasible region as in problem~\eqref{eq9form} for the inner min-max problem in \eqref{l2minminmaxproblem}, but do not force the hard constraint $z\geq\epsilon$ for the outer problem as in problem~\eqref{eq9form}. Instead, we will search for $z^*$ over $z \ge \cO(\max\{z^*,\epsilon\})$ in our algorithm.

\subsection{Stochastic Mirror Descent Method}

In this subsection, we focus on solving the inner saddle point problem of~\eqref{l2minminmaxproblem} given by
\begin{align}
    \min_{x\in X,y\in Y}\max_{\lambda\in\Lambda}L(x,y,z,\lambda)
    \label{problem:inner}
\end{align}
for a fixed value of $z>0$. While there exists various algorithms
for solving~\eqref{problem:inner}, we consider here the simplest stochastic mirror descent (SMD) method~\cite{nemirovski2009robust},~\cite[Section 4.3]{lan2020first}. For any $z>0$, we define the vectors
\begin{align}
    l'_z(x,y,\lambda,\xi):=\left[
    \begin{array}{c}
    \cL'_x(x,y,z,\lambda,\xi) \\
    \cL'_y(x,y,z,\lambda,\xi) \\
    -\cL'_\lambda(x,y,z,\lambda,\xi)
    \end{array}
    \right]=\left[
    \begin{array}{c}
    \tfrac{2c}{z}(F(x,\xi)-y)_+F'(x,\xi)+\lambda F'(x,\xi) \\
    -\tfrac{2c}{z}(F(x,\xi)-y)_++1-\lambda \\
    -F(x,\xi)+y
    \end{array}
    \right],
    \label{def:lzsubgrad}
\end{align}
and
\begin{align}
    \Bar{l}'_z(x,y,\lambda):=\E\left[l'_z(x,y,\lambda,\xi)\right].
    \label{def:barlz}
\end{align}
Notice that $\cL'_z(x,y,\lambda,z,\xi)$ is not utilized in~\eqref{def:lzsubgrad} because for each inner problem~\eqref{problem:inner}, $z$ is fixed as a constant. We equip $X$ with a distance generating function $v_X:X\to\R$ modulus $1$ with respect to the norm $\|\cdot\|_X$ and the associated prox-function $V_X(\cdot,\cdot)$, and
define the ``radius'' of $X$ centered at $x\in X$ as
\begin{align}
    D_{v_X,x}^2:=\max_{x'\in X}V_X(x,x').
    \label{def:DXv}
\end{align}
Note that $D_{v_X,x}^2$ is bounded by $LD_X^2/2$ if $v_X$ is $L$-Lipschitz smooth on $X$. For example, for any $X\subseteq\R^n$, the Euclidean distance with $v_X(\cdot)=\|\cdot\|_2^2/2$ has $L=1$. Moreover, even if $v_X$ is not Lipschitz smooth, $D_{v_X,x}^2$ can still be bounded given some specific $x\in X$. For instance, KL-divergence $V_X(\cdot,\cdot)$ on the simplex $X=\{x\in\R^n:x_i\geq 0 \ \forall i=1,...,n,\tsum_{i=1}^n x_i=1\}$ with $x=(1,1,...,1)^\top/n\in X$ has $\max_{x'\in X}V_X(x,x')=\max_{x'\in X}\{\tsum_{i=1}^n x'_i\log(x'_i/x_i)\}=\max_{x'\in X}\{\tsum_{i=1}^n x'_i\log(x'_i)\}+\log(n)\leq \log(n)$. We also equip the one-dimensional spaces $Y$ and $\Lambda$ with the distance generating function $v_2(\cdot)=\|\cdot\|_2^2/2$, and the associated prox-function $V_2(y_1,y_2)=(y_1-y_2)^2/2$. 
Furthermore, we define $U:=X\times Y\times\Lambda$, and equip it with the distance generating function $v(x,y,\lambda)=
v_X(x) + y^2/2 + \lambda^2/2$.
It is easy to see that $v$ has modulus $1$ with respect to the redefined norm $\|\cdot\|$ (see the original definition of $\|\cdot\|$ in~\eqref{def:norm1}) below that satisfies
\begin{align}
    \|(x,y,\lambda)\|=\sqrt{\|x\|_X^2+y^2+\lambda^2}
    \label{def:generalnorm}
\end{align}
for any $x\in\R^n,y,\lambda\in\R$, while its dual norm $\|\cdot\|_*$ satisfies $\|(\zeta_x,\zeta_y,\zeta_\lambda)\|_*=\sqrt{\|\zeta_x\|_{*,X}^2+\zeta_y^2+\zeta_\lambda^2}$ for any $\zeta_x\in\R^n,\zeta_y,\zeta_\lambda\in\R$. It follows that for any $u=(x,y,\lambda),u'=(x',y',\lambda')\in U$, the associated prox-function $V(u,u')$ satisfies
\begin{align}
V(u,u')&=V_X(x,x')+V_2(y,y')+V_2(\lambda,\lambda')\leq D_{v_X,x}^2+\tfrac{1}{2}(y-y')^2+\tfrac{1}{2}(\lambda-\lambda')^2 \nn\\
    &\leq D_{v_X,x}^2+2(L_fD_X+1)^2+\tfrac{1}{2}=:D_{V,x}^2
    \label{def:D}
\end{align}
 where the first inequality follows from~\eqref{def:DXv}, and the second inequality follows from~\eqref{eq:ybound} and $\Lambda=[0,1]$. 

\begin{algorithm}
\caption{The Stochastic Mirror Descent (SMD) Method for Solving Problem~\eqref{problem:inner}}
\label{algo:SMD}
\begin{algorithmic}
\State{\textbf{Input:} $(x_0,y_0,\lambda_0)=u_0\in U,z>0,T>0,\{\gamma_t\}_{t=0}^{T-1}$.}
\For{$t=0,1,...,T-1$}
\State{$u_{t+1} \equiv (x_{t+1},y_{t+1},\lambda_{t+1})=\argmin_{u\in U}\gamma_t\langle l'_z(u_t,\xi_t),u\rangle+V(u_t,u)$.}
\EndFor
\State{\textbf{Output:} $\Bar{u}_T=\left(\tsum_{t=0}^{T-1}\gamma_t\right)^{-1}\tsum_{t=0}^{T-1}\gamma_tu_t$ and $\zeta_T=\left(\tsum_{t=0}^{T-1}\gamma_t\right)^{-1}\tsum_{t=0}^{T-1}\gamma_t\cL'_z(x_t,y_t,z,\lambda_t,\xi'_t)$.}
\end{algorithmic}
\end{algorithm}

The SMD method for solving problem~\eqref{problem:inner} is formally stated in Algorithm~\ref{algo:SMD}.
This algorithm computes a pair of outputs $\bar u_T$ and $\zeta_T$, where $\bar u_T$
is an approximate solution for the inner saddle point problem~\eqref{problem:inner} and $\zeta_T$ is
an approximate subgradient of $\psi$ at $z$.
Note that for $t=0,...,T-1$, the samples $\xi_t$ used in $l'_z(u_t,\xi_t)$ to update $u_{t+1}$ are independent of the samples $\xi'_t$ in $\cL'_z(x_t,y_t,z,\lambda_t,\xi'_t)$ used to compute $\zeta_T$.

We are now ready to describe the convergence properties of Algorithm~\ref{algo:SMD}. Lemma~\ref{lem:SMDconverge} below
shows the convergence of the first output $\bar u_T$, while Lemma~\ref{lem:subdiff}
discusses the convergence of the second output $\zeta_T$ of Algorithm~\ref{algo:SMD}.

\begin{lemma}
    Under Assumption~\ref{assumption2}, and conditioning on~\eqref{eq:Ycontainy}, for any $\Tilde{c}>0$, if we set
    \begin{align*}
        T\geq\tfrac{81\Tilde{c}^2D_{V,x_0}^2M_z^2}{4\epsilon^2} \quad \text{and} \quad \gamma_t=\tfrac{D_{V,x_0}}{M_z\sqrt{T}}
    \end{align*}
     in Algorithm~\ref{algo:SMD}, where $D_{V,x_0}$ is defined in~\eqref{def:D} and
    \begin{align}
        M_z^2:=&\tfrac{3c^2}{z^2}\left\{L_G^2+4(L_f^2+1)\beta^2+16\left[4(L_f^2+\sigma_f^2)+1\right](L_fD_X+1)^2\right\}+12(L_f^2+\sigma_f^2)+8(L_fD_X+1)^2+2\beta^2+3,
        \label{def:M^2}
    \end{align}
    then the output solution $\Bar{u}_T =(\Bar{x}_T,\Bar{y}_T,\Bar{\lambda}_T)$ satisfies
    \begin{align*}
        \E\left[\max_{\lambda\in \Lambda}L(\Bar{x}_T,\Bar{y}_T,z,\lambda)-\min_{x\in X,y\in Y}L(x,y,z,\Bar{\lambda}_T)\right]\leq\left(\tsum_{t=0}^{T-1}\gamma_t\right)^{-1}\E\left[\max_{u\in U}\tsum_{t=0}^{T-1}\gamma_t\Bar{l}'_z(u_t)^{\top}(u_t-u)\right]\leq\tfrac{\epsilon}{\Tilde{c}}.
    \end{align*}
    \label{lem:SMDconverge}
\end{lemma}
\begin{proof}
    The convergence analysis of the SMD method for solving general convex-concave stochastic saddle point problems can be found in~\cite[Lemma 3.1]{nemirovski2009robust} and~\cite[Lemma 4.6]{lan2020first}. Based on these results, we have
    \begin{align}
        &\E\left[\max_{\lambda\in \Lambda}L(\Bar{x}_T,\Bar{y}_T,z,\lambda)-\min_{x\in X,y\in Y}L(x,y,z,\Bar{\lambda}_T)\right]\leq\left(\tsum_{t=0}^{T-1}\gamma_t\right)^{-1}\E\left[\max_{u\in U}\tsum_{t=0}^{T-1}\gamma_t\Bar{l}'_z(u_t)^{\top}(u_t-u)\right] \nn\\
        &\leq\left(\tsum_{t=0}^{T-1}\gamma_t\right)^{-1}\left(\max_{u\in U}2V(u_0,u)+\tfrac{5}{2}\tsum_{t=0}^{T-1}\gamma_t^2\E\left[\|l_z'(u_t,\xi)\|_*^2\right]\right).
        \label{eq1lem1algo}
    \end{align}
    For any $u\in U$, in view of the definition of $l_z'(u,\xi)$ in~\eqref{def:lzsubgrad}, we have
    \begin{align}
        &\E\left[\left\|l_z'(u,\xi)\right\|_*^2\right]=\E\left[\left\|\cL'_x(x,y,z,\lambda,\xi)\right\|_{*,X}^2\right]+\E\left[\left(\cL'_y(x,y,z,\lambda,\xi)\right)^2\right]+\E\left[\left(\cL'_\lambda(x,y,z,\lambda,\xi)\right)^2\right]\leq M_z^2,
        \label{eq:Mz}
    \end{align}
    where the inequality follows from Lemma~\ref{lem:Lipsboundnum} and the definition of $M_z^2$ in~\eqref{def:M^2}. Finally, utilizing $\gamma_t=D_{V,x_0}/(M_z\sqrt{T})$, and the bounds on $V(u_0,u)$ and $\E[\|l_z'(u_t,\xi)\|_*^2]$ provided in~\eqref{def:D} and~\eqref{eq:Mz}, respectively in~\eqref{eq1lem1algo}, we conclude
    \begin{align*}
        \E\left[\max_{\lambda\in \Lambda}L(\Bar{x}_T,\Bar{y}_T,\lambda)-\min_{x\in X,y\in Y}L(x,y,\Bar{\lambda}_T)\right]\leq\left(\tsum_{t=0}^{T-1}\gamma_t\right)^{-1}\E\left[\max_{u\in U}\tsum_{t=0}^{T-1}\gamma_t\Bar{l}'_z(u_t)^{\top}(u_t-u)\right]\leq\tfrac{9D_{V,x_0}M_z}{2\sqrt{T}},
    \end{align*}
    which, in view of $T\geq81\Tilde{c}^2D_{V,x_0}^2M_z^2/(4\epsilon^2)$, implies our result.
\end{proof}

\vgap

Lemma~\ref{lem:subdiff} below shows that the output $\zeta_T$ in Algorithm~\ref{algo:SMD},
given as a convex combination of $\cL'_z(x_t,y_t,z_t,\lambda,\xi'_t)$ for $t=0,...,T-1$,
is an approximate subgradient of $\psi(z)$. Note that since $\psi(z)$ is an one-dimensional convex function, the exact subgradient vector $\psi'(z)$ is nonnegative if and only if $z\geq z^*$. Later we will show that the approximation $\zeta_T$ of $\psi'(z)$ also
provides us with information to search for $z^*$.

\begin{lemma}
    Under Assumptions~\ref{assumption1} and~\ref{assumption2}, and conditioning on~\eqref{eq:Ycontainy}, for any $\alpha\in(0,1)$, if we set
    \begin{align}
        T\geq\max\left\{\tfrac{1296}{\alpha^2}D_{V,x_0}^2M_z^2,\tfrac{32}{\alpha}(z^*-z)^2\Tilde{M}_z^4\right\}\epsilon^{-2} \quad \text{and} \quad \gamma_t=\tfrac{D_{V,x_0}}{M_z\sqrt{T}}
        \label{Tgamma1}
    \end{align}
    in Algorithm~\ref{algo:SMD}, where $D_{V,x_0}$ and $M_z$ are defined in~\eqref{def:D} and~\eqref{def:M^2}, respectively, and
    \begin{align}
        \Tilde{M}_z^4:=\tfrac{8c^2}{z^4}\left[M_f^4+16(L_fD_X+1)^4\right]
        \label{def:tildeMz}
    \end{align}
    is the upper bound for $\mathrm{Var}(\cL'_z(x,y,z,\lambda,\xi))$, then
    \begin{align*}
        \psi^*\geq\psi(z)+\zeta_T(z^*-z)-\tfrac{\epsilon}{2}
    \end{align*}
    holds with probability at least $1-\alpha$, where $z^*$ is defined in~\eqref{def:yzstar} and $\psi^*:=\lim_{z\to z^*}\psi(z)=\inf_{z>0}\psi(z)$.
    \label{lem:subdiff}
\end{lemma}
\begin{proof}
    For any $z>0$, we let $u_z^*=(x_z^*,y_z^*,\lambda_z^*)$ be the solution for problem~\eqref{problem:inner} and denote $\Tilde{u}_z:=(x^*,y^*,\lambda_z^*)\in U$ (see~\eqref{def:yzstar}). Then for any $(x,y,\lambda)=u\in U$, we have
    \begin{align}
        \psi^*\overset{(a)}{=}&\lim_{\Tilde{z}\to z^*}\min_{\Tilde{x}\in X,\Tilde{y}\in Y}\max_{\Tilde{\lambda}\in\Lambda}L(\Tilde{x},\Tilde{y},\Tilde{z},\Tilde{\lambda})\overset{(b)}{=}\lim_{\Tilde{z}\to z^*}\max_{\Tilde{\lambda}\in\Lambda}L(x^*,y^*,\Tilde{z},\Tilde{\lambda})\overset{(c)}{=}\lim_{\Tilde{z}\to z^*}L(x^*,y^*,\Tilde{z},\lambda) \nonumber\\
        \overset{(d)}{\geq}&L(x,y,z,\lambda)+L'_x(x,y,z,\lambda)^{\top}(x^*-x)+L'_y(x,y,z,\lambda)(y^*-y)+\lim_{\Tilde{z}\to z^*}L'_z(x,y,z,\lambda)(\Tilde{z}-z) \nonumber\\
        \overset{(e)}{=}&L(x,y,z,\lambda_z^*)-L'_\lambda(x,y,z,\lambda)(\lambda_z^*\!-\!\lambda)+L'_x(x,y,z,\lambda)^{\top}(x^*\!-x)+L'_y(x,y,z,\lambda)(y^*\!-y)+L'_z(x,y,z,\lambda)(z^*\!-z) \nonumber\\
        =&L(x,y,z,\lambda_z^*)+\Bar{l}_z'(x,y,\lambda)^{\top}(\Tilde{u}_z-u)+L'_z(x,y,z,\lambda)(z^*-z) \nn\\
        \overset{(f)}{\geq}&\psi(z)+\Bar{l}_z'(x,y,\lambda)^{\top}(\Tilde{u}_z-u)+L'_z(x,y,z,\lambda)(z^*-z),
        \label{eq:subdiff}
    \end{align}
    where $(a)$ follows from the definition of $\psi^*$, $(b)$ holds because the infimum of $\max_{\Tilde{\lambda}\in\Lambda}L(x,y,z,\Tilde{\lambda})$ for $x\in X,y\in\R,z>0$ is approached as  $(x,y,z)\to (x^*,y^*,z^*)$, $(c)$ is satisfied due to $y^*=\E[F(x^*,\xi)]$ by~\eqref{def:yzstar}, $(d)$ utilizes the joint convexity of $L(x,y,z,\lambda)$ with respect to $(x,y,z)$, $(e)$ holds due to the linearity of $L(x,y,z,\lambda)$ with respect to $\lambda$, and $(f)$ follows from $L(x,y,z,\lambda_z^*)\geq\min_{x\in X,y\in Y}L(x,y,z,\lambda_z^*)=L(x_z^*,y_z^*,z,\lambda_z^*)=\psi(z)$. For any $z>0$, by taking the convex combination of the inequalities~\eqref{eq:subdiff} with $u=u_t$ and the weights $\gamma_t/(\tsum_{t=0}^{T-1}\gamma_t)$ for $t=0,1,...,T-1$, we conclude that
    \begin{align}
        \psi^*\geq&\psi(z)+\left(\tsum_{t=0}^{T-1}\gamma_t\right)^{-1}\tsum_{t=0}^{T-1}\gamma_t\Bar{l}'_z(u_t)^{\top}(\Tilde{u}_z-u_t)+\left[\left(\tsum_{t=0}^{T-1}\gamma_t\right)^{-1}\tsum_{t=0}^{T-1}\gamma_tL'_z(x_t,y_t,z,\lambda_t)\right](z^*-z), \nn\\
        \geq&\psi(z)-\left(\tsum_{t=0}^{T-1}\gamma_t\right)^{-1}\left[\max_{u\in U}\tsum_{t=0}^{T-1}\gamma_t\Bar{l}'_z(u_t)^{\top}(u_t-u)\right]+\E[\zeta_T\mid u_0,...,u_{T-1}](z^*-z),
        \label{eq:subdiff_iter}
    \end{align}
    where $\E[\zeta_T|u_0,...,u_{T-1}]$ is taken with respect to the randomness of the samples $\xi'_0,...,\xi'_{T-1}$.

    Below we provide the probability guarantees for two events. Let us first consider the following event
    \begin{align}
        A:=\left\{\max_{u\in U}\left(\tsum_{t=0}^{T-1}\gamma_t\right)^{-1}\left(\tsum_{t=0}^{T-1}\gamma_t\Bar{l}'_z(u_t)^{\top}(u_t-u)\right)\leq\tfrac{\epsilon}{4}\right\}.
        \label{def:eventA}
    \end{align}
    For any $\alpha\in(0,1)$, since $T\geq 1296D_{V,x_0}^2M_z^2/(\alpha^2\epsilon^2)$ and $\gamma_t=D_{V,x_0}/(M_z\sqrt{T})$ hold in Algorithm~\ref{algo:SMD} by~\eqref{Tgamma1}, according to Lemma~\ref{lem:SMDconverge}, we have
    \begin{align}
        \left(\tsum_{t=0}^{T-1}\gamma_t\right)^{-1}\E\left[\max_{u\in U}\tsum_{t=0}^{T-1}\gamma_t\Bar{l}'_z(u_t)^{\top}(u_t-u)\right]\leq\tfrac{\alpha\epsilon}{8}.
        \label{eq1lem2algo}
    \end{align}
    Then, it follows that
    \begin{align}
        \P(A)\geq1-\tfrac{4}{\epsilon}\E\left[\max_{u\in U}\left(\tsum_{t=0}^{T-1}\gamma_t\right)^{-1}\left(\tsum_{t=0}^{T-1}\gamma_t\Bar{l}'_z(u_t)^{\top}(u_t-u)\right)\right]\geq1-\tfrac{4}{\epsilon}\times\tfrac{\alpha\epsilon}{8}=1-\tfrac{\alpha}{2},
        \label{eq:prob1}
    \end{align}
    where the first inequality uses Markov's inequality, and the second inequality follows from~\eqref{eq1lem2algo}. When the event $A$ occurs, according to~\eqref{eq:subdiff_iter}, we have
    \begin{align}
        \psi^*\geq\psi(z)+\E[\zeta_T\mid u_0,...,u_{T-1}](z^*-z)-\tfrac{\epsilon}{4}.
        \label{eq2lem2algo}
    \end{align}
    
    Now we consider the second event
    \begin{align}
        B:=\left\{\zeta_T(z^*-z)\leq\E[\zeta_T\mid\cF_{T-1}](z^*-z)+\tfrac{\epsilon}{4}\right\},
        \label{def:eventB}
    \end{align}
    where for any $s=0,...,T-1$,
    \begin{align}
        \cF_s:=\sigma(u_t\mid t=0,...,s)
        \label{def:calF}
    \end{align}
    is defined as the $\sigma$-algebra associated with the probability space supporting the random process in Algorithm~\ref{algo:SMD}. Given any $x\in X,y\in Y,z>0$ and $\lambda\in\Lambda$, we have
    \begin{align}
        &\mathrm{Var}\left(\cL'_z(x,y,z,\lambda,\xi)\right)=\mathrm{Var}\left(-\tfrac{c}{z^2}(F(x,\xi)-y)_+^2\right)\leq\tfrac{c^2}{z^4}\E\left[(F(x,\xi)-y)_+^4\right] \nn\\
        &\leq\tfrac{8c^2}{z^4}\left\{\E\left[(F(x,\xi)-f(x))_+^4\right]+(f(x)-y)^4\right\}\leq\tfrac{8c^2}{z^4}\left[M_f^4+16(L_fD_X+1)^4\right]=\Tilde{M}_z^4,
        \label{eq2.5lem2algo}
    \end{align}
    where the second inequality utilizes $(F(x,\xi)-y)_+\leq (F(x,\xi)-f(x))_++|f(x)-y|$ and Young's inequality, the third inequality follows from Assumption~\ref{assumption1},~\eqref{eq:Ycontainy} and~\eqref{eq:ybound}, and the last equality applies the definition of $\Tilde{M}_z^4$ in~\eqref{def:tildeMz}. Then it follows that
    \begin{align}
        &\mathrm{Var}(\zeta_T\mid\cF_{T-1})=\mathrm{Var}\left(\tfrac{\tsum_{t=0}^{T-1}\gamma_t\cL'_z(x_t,y_t,z,\lambda_t,\xi'_t)}{\tsum_{t=0}^{T-1}\gamma_t}\mid\cF_{T-1}\right)=\tfrac{\tsum_{t=0}^{T-1}\gamma_t^2\mathrm{Var}\left(\cL'_z(x_t,y_t,z,\lambda_t,\xi'_t)\mid\cF_{T-1}\right)}{\left(\tsum_{t=0}^{T-1}\gamma_t\right)^2} \nn\\
        &\leq\tfrac{\tsum_{t=0}^{T-1}\gamma_t^2}{\left(\tsum_{t=0}^{T-1}\gamma_t\right)^2}\max_{t=0,...,T-1}\left\{\mathrm{Var}\left(\cL'_z(x_t,y_t,z,\lambda_t,\xi'_t)\mid\cF_{T-1}\right)\right\}\leq\tfrac{1}{T}\Tilde{M}_z^4\leq\tfrac{\alpha\epsilon^2}{32(z^*-z)^2},
        \label{eq3lem2algo}
    \end{align}
    where the second equality holds since $\xi_t',t=0,...,T-1$ are independent samples, the second inequality utilizes~\eqref{eq2.5lem2algo},$\gamma_t=D_{V,x_0}/(M_z\sqrt{T})$ and the fact that the samples $\xi_t'$ are independent of $\xi_t$ for $t=0,...,T-1$, and the third inequality follows from $T\geq 32(z^*-z)^2\Tilde{M}_z^4/(\alpha\epsilon^2)$ by~\eqref{Tgamma1}. Therefore, we have
    \begin{align}
        \P(B\mid\cF_{T-1})&=1-\P\left(\zeta_T(z^*-z)-\E[\zeta_T](z^*-z)>\tfrac{\epsilon}{4}\mid\cF_{T-1}\right)\geq 1-\tfrac{\mathrm{Var}(\zeta_T(z^*-z)\mid\cF_{T-1})}{\mathrm{Var}(\zeta_T(z^*-z)\mid\cF_{T-1})+\epsilon^2/16} \nn\\
        &=1-\tfrac{(z^*-z)^2\mathrm{Var}(\zeta_T\mid\cF_{T-1})}{(z^*-z)^2\mathrm{Var}(\zeta_T\mid\cF_{T-1})+\epsilon^2/16}\geq 1-\tfrac{\alpha\epsilon^2/32}{\alpha\epsilon^2/32+\epsilon^2/16}=1-\tfrac{\alpha}{\alpha+2}\geq 1-\tfrac{\alpha}{2},
        \label{eq:prob2}
    \end{align}
    where the first inequality utilizes Cantelli's inequality, and the second inequality uses~\eqref{eq3lem2algo}. Since the event $A$ is determined by $u_0,u_1,...,u_{T-1}$, then in view of~\eqref{def:calF}, we have $A\in\cF_{T-1}$, and it follows from~\eqref{eq:prob2} and the law of total expectations that
    \begin{align}
        \P(B\mid A)=\E[\P(B\mid\cF_{T-1})\mid A]\geq 1-\tfrac{\alpha}{2}.
        \label{eq:prob2BonA}
    \end{align}
    
    Finally, when both the events $A$ and $B$ occur, in view of~\eqref{eq2lem2algo} and~\eqref{def:eventB}, we have
    \begin{align*}
        \psi^*\geq\psi(z)+\left[\E[\zeta_T\mid u_0,...,u_{T-1}](z^*-z)+\tfrac{\epsilon}{4}\right]-\tfrac{\epsilon}{2}\geq\psi(z)+\zeta_T(z^*-z)-\tfrac{\epsilon}{2},
    \end{align*}
    which occurs with probability at least $\P(A)\P(B\mid A)\geq(1-\alpha/2)^2\geq 1-\alpha$ by concluding~\eqref{eq:prob1} and~\eqref{eq:prob2BonA}.
\end{proof}

\vgap

We remark here that the samples $\xi'_t$ are necessary to be independent of $\xi_t$, due to the computation of the conditional probability $\P(B\mid A)$. The event $A$ relies on how accurately we solve the inner problem~\eqref{problem:inner}, which occurs with probability at least $1-\alpha/2$ in~\eqref{eq:prob1} when $T\geq \cO(\alpha^{-2}(1+z^{-2})\epsilon^{-2})$ holds, while the event $B$ relies on how precisely we estimate $\E[\zeta_T\mid\cF_{T-1}]$, which occurs with probability at least $1-\alpha/2$ in~\eqref{eq:prob2} when $T\geq \cO(\alpha^{-1}(z^*-z)^2z^{-4}\epsilon^{-2})$ is satisfied.

Lemma~\ref{lem:subdiff} 
provides a certain probability guarantee for $\psi^*\geq\psi(z)+\zeta_T(z^*-z)-\epsilon/2$ which will enable us to develop a probabilistic line search procedure to find $z^*$. Specifically, for any $z>0$ such that
$\psi(z)-\psi^*>\epsilon/2$ holds ($z\neq z^*$), suppose that $\psi^*\geq\psi(z)+\zeta_T(z^*-z)-\epsilon/2$ is satisfied, it follows that $\zeta_T(z-z^\star)\geq\psi(z)-\psi^\star-\epsilon/2>0$, thus we have
\begin{align}
    z > z^* \quad \Leftrightarrow \quad \zeta_T>0.
    \label{relation_zeta_z}
\end{align}
Therefore, for any $z>0$ such that $\psi(z)-\psi^*>\epsilon/2$, the positivity or negativity of $\zeta_T$ computed by Algorithm~\ref{algo:SMD} 
indicates whether $z^*>z$ or $z^*<z$. We will show the details of our search procedure based on~\eqref{relation_zeta_z} in the next subsection.

\subsection{Probabilistic Bisection Method with Simple Stopping Criterion}

In view of~\eqref{l2minminmaxproblem} and the relation in~\eqref{relation_zeta_z}, the outer layer of our method solves the following one-dimensional convex minimization problem
\begin{align}
    \inf_{z>0}\psi(z)
    \label{problem:outer}
\end{align}
using a bisection procedure, utilizing Algorithm~\ref{algo:SMD} as a subroutine to compute the approximate subgradients of $\psi(z)$. We present this probabilistic bisection method in Algorithm~\ref{algo:PB}.

\begin{algorithm}
\caption{The Probabilistic Bisection Method}
\label{algo:PB}
\begin{algorithmic}
\State{\textbf{Input:} $u_0 = (x_0, y_0, 0)\in U$ with $y_0=\tsum_{i=1}^mF(x_0,\xi_i)\big/m$, $\theta>0$, the initial interval $[a_0,b_0]$ with $a_0=0$ and $b_0\geq\max\{2M_f,\theta\}$.}
\For{$k=0,1,...$}
\State{Set $z_k=(a_k+b_k)/2$.}
\State{Set $(\Bar{u}_k,\zeta_k)$ as output of Algorithm~\ref{algo:SMD} applied to problem~\eqref{problem:inner} with input $u_0,z_k,T_k,\{\gamma_{k,t}\}_{t=0}^{T_k-1}$.}
\If{$\zeta_k>0$}
\begin{equation}
    a_{k+1}=a_k,b_{k+1}=z_k.
    \label{eq1:algo}
\end{equation}
\Else
\begin{equation}
    a_{k+1}=z_k,b_{k+1}=b_k.
    \label{eq2:algo}
\end{equation}
\EndIf
\EndFor
\end{algorithmic}
\end{algorithm}

Algorithm~\ref{algo:PB} conducts the bisection search for $z^*$, starting from the initial interval $[a_0,b_0]$ with $a_0=0$ and $b_0\geq \max\{2M_f,\theta\}$, where $M_f$ can be found in Assumption~\ref{assumption1} and $\theta$ is an input of the algorithm. Below we first explain the reasoning behind the setting $b_0\geq 2M_f$. By the definition of $z^*$ in~\eqref{def:yzstar} and  Assumption~\ref{assumption1}, we have
\begin{align}
    z^*=2\E^{\tfrac{1}{2}}\left[(F(x^*,\xi)-\E[F(x^*,\xi)])_+^2\right]\leq2\E^{\tfrac{1}{4}}\left[(F(x^*,\xi)-\E[F(x^*,\xi)])_+^4\right]\leq 2M_f.
    \label{eq:z*bound}
\end{align}
Therefore, picking $b_0\geq 2M_f$ ensures that $z^*\in [a_0,b_0]$. Moreover, since the value of $M_f$ depends on the specific problem instance~\eqref{l2problem} and could be arbitrarily close to $0$, we set $\theta$ as the other lower bound for $b_0$. Then we iteratively shrink the interval $[a_k,b_k]$ that brackets $z^*$ through probabilistic bisections. Based on Lemma~\ref{lem:subdiff} and the discussion above the relation~\eqref{relation_zeta_z}, the relations $z_k>z^*$ and $\zeta_k>0$ are equivalent under certain probability guarantee
when $\psi(z_k)-\psi^*>\epsilon/2$ holds. Therefore,
if $\zeta_k>0$, we set the next interval $[a_{k+1},b_{k+1}]=[a_k,z_k]$. Otherwise, if $\zeta_k \leq 0$, we take $[a_{k+1},b_{k+1}]=[z_k,b_k]$. Thus, $z^*\in[a_k,b_k]$ holds under certain probability guarantee, until $\psi(z_k)-\psi^*\leq\epsilon/2$ is satisfied.

Notice that in Algorithm~\ref{algo:PB}, we do not specify any termination criterion. We introduce a simple stopping criterion in this section. Specifically, Algorithm~\ref{algo:PB} terminates when
\begin{align}
    b_{k'+1}<\theta
    \label{simplestopping}
\end{align}
is satisfied for some $k'\in\{0,1,...\}\cup\{+\infty\}$.
An enhanced stopping criterion will be discussed in the next subsection. Throughout the remaining part of this section,
we say an outer iteration in Algorithm~\ref{algo:PB} occurs whenever $k$ increases by $1$, or equivalently, Algorithm~\ref{algo:SMD} is called. 

To carry out the convergence analysis for Algorithm~\ref{algo:PB}, we first state in Lemma~\ref{lem:opt} below two sufficient conditions for attaining an $\epsilon$-optimal solution for problem~\eqref{l2problem}. Later we will show in Theorem~\ref{thm:converge1} that at least one of these conditions holds under certain probability guarantees.

\begin{lemma}
    Under Assumption~\ref{assumption2}, and conditioning on~\eqref{eq:Ycontainy}, suppose that we apply the stopping criterion~\eqref{simplestopping} and take
    \begin{align}
        \theta=\tfrac{\epsilon}{c}, \quad T_k\geq\tfrac{81D_{V,x_0}^2M_{z_k}^2}{\epsilon^2} \quad \text{and} \quad \gamma_{k,t}=\tfrac{D_{V,x_0}}{M_{z_k}\sqrt{T_k}}
        \label{Tgamma3}
    \end{align}
    in Algorithm~\ref{algo:PB}, where $D_{V,x_0}$ and $M_{z_k}$ are defined in~\eqref{def:D} and~\eqref{def:M^2}, respectively. Then
    an $\epsilon$-optimal solution for problem~\eqref{l2problem}, i.e.,
    a point $\bar x \in X$ s.t. $\E[h(\bar x)]-h(x^*)\leq\epsilon$, is attained under either of the following two conditions:
    \begin{align*}
        \textit{(a)} \ \psi(z_k)-\psi^*\leq\tfrac{\epsilon}{2}, \qquad \textit{(b)} \ \text{Algorithm~\ref{algo:PB} terminates with} \ z^*\leq z_k=b_{k+1}<\theta.
    \end{align*}
    \label{lem:opt}
\end{lemma}
\begin{proof}
    Let $\Bar{u}_k=(\Bar{x}_k,\Bar{y}_k,\Bar{\lambda}_k)$ in Algorithm~\ref{algo:PB}. According to Lemma~\ref{lem:SMDconverge}, if $T_k$ and $\gamma_{k,t}$ are set to~\eqref{Tgamma3}, then
    \begin{align}
        \E[h(\Bar{x}_k)]-\psi(z_k)&=\E\left[\inf_{y\in Y,z>0}L(\Bar{x}_k,y,z,0)-\min_{x\in X,y\in Y}\max_{\lambda\in\Lambda}L(x,y,z_k,\lambda)\right] \nn\\
        &\leq\E\left[\max_{\lambda\in\Lambda}L(\Bar{x}_k,\Bar{y}_k,z_k,\lambda)-\min_{x\in X,y\in Y}L(x,y,z_k,\Bar{\lambda}_k)\right]\leq\tfrac{\epsilon}{2},
        \label{eq1lem3algo}
    \end{align}
    where the equality follows from Lemma~\ref{lem:phi}, $L(\Bar{x}_T,y,z,0)=\phi(\Bar{x}_T,y,z)$ and the definition of $\psi(z)$ in~\eqref{l2minminmaxproblem}, the first inequality utilizes $z_k>0$ and the feasibility of $(\Bar{x}_T,\Bar{y}_T,\Bar{\lambda}_T)$, and the second inequality follows from Lemma~\ref{lem:SMDconverge}. Therefore, when $\psi(z_k)-\psi^*\leq\epsilon/2$ holds, we have
    \begin{align}
        \E[h(\Bar{x}_k)]-h(x^*)=\left\{\E[h(\Bar{x}_k)]-\psi(z_k)\right\}+\left[\psi(z_k)-\psi^*\right]\leq\tfrac{\epsilon}{2}+\tfrac{\epsilon}{2}=\epsilon,
        \label{eq:opt1}
    \end{align}
    where the inequality follows from~\eqref{eq1lem3algo}. When Algorithm~\ref{algo:PB} terminates with $z^*\leq z_k=b_{k+1}<\epsilon/c$, we have
    \begin{align}
        \E[h(\Bar{x}_k)]-h(x^*)&\leq \E\left[\max_{\lambda\in\Lambda}L(\Bar{x}_k,\Bar{y}_k,z_k,\lambda)\right]-h(x^*)\leq\left[\min_{x\in X,y\in Y}L(x,y,z_k,\Bar{\lambda}_k)+\tfrac{\epsilon}{2}\right]-h(x^*) \nn\\
        &\leq L(x^*,y^*,z_k,0)-\lim_{z\to z^*}L(x^*,y^*,z,0)+\tfrac{\epsilon}{2}=\tfrac{c}{4}\left(\tfrac{z^{*2}}{z_k}-z^*\right)+\tfrac{c}{4}(z_k-z^\star)+\tfrac{\epsilon}{2} \nn\\
        &\leq\tfrac{c}{4}z_k+\tfrac{c}{4}z_k+\tfrac{\epsilon}{2}<\tfrac{\epsilon}{2}+\tfrac{\epsilon}{2}=\epsilon,
        \label{eq:opt2}
    \end{align}
    where the first inequality follows from Lemma~\ref{lem:phi} and $\phi(\Bar{x}_k,\Bar{y}_k,z_k)=L(\Bar{x}_k,\Bar{y}_k,z_k,0)$, the second inequality applies~\eqref{eq1lem3algo}, the third inequality holds due to $x^* \in X$, $y^*=\E[F(x^*,\xi)]\in Y$ by~\eqref{def:yzstar} and~\eqref{eq:Ycontainy}, Lemma~\ref{lem:phi}, as well as $\lim_{z\to z^*}\phi(x^*,y^*,z)=\lim_{z\to z^*}L(x^*,y^*,z,0)$, and the rest of equations follow from~\eqref{def:yzstar} and $z^*\leq z_k<\epsilon/c$. Combining~\eqref{eq:opt1} and~\eqref{eq:opt2}, $\Bar{x}_k$ is an $\epsilon$-optimal solution for problem~\eqref{l2problem} in both cases.
\end{proof}
\vgap

Here we add some remarks to explain condition (b) in Lemma~\ref{lem:opt}, which can be divided into three relations. First, $b_{k+1}<\theta$ is the stopping criterion~\eqref{simplestopping} in Algorithm~\ref{algo:PB}. Second, when Algorithm~\ref{algo:PB} terminates with $b_{k+1}<\theta$, we must have $z_k=b_{k+1}$; otherwise, if $z_k=a_{k+1}$, then $b_{k+1}=b_k$ by~\eqref{eq2:algo}, and Algorithm~\ref{algo:PB} should have stopped before the outer iteration $k$ because $b_k<\theta$. Moreover, we will show that $z^*\in[a_{k+1},b_{k+1}]$ holds under certain probability guarantee in Lemma~\ref{lem:induction1} below, which indicates $z^*\leq b_{k+1}=z_k$.

Before moving to the convergence of Algorithm~\ref{algo:PB}, we provide a probability guarantee result for the bisection process in Lemma~\ref{lem:induction1} below, by extending the probability guarantee in Lemma~\ref{lem:subdiff} across multiple outer iterations in Algorithm~\ref{algo:PB}. For this purpose, let us assume that in Algorithm~\ref{algo:PB}, an $\epsilon$-optimal solution $\Bar{x}_{k_0}$ for problem~\eqref{l2problem}, i.e., $\E[h(\bar{x}_{k_0})]-h(x^*)\leq\epsilon$ is found in some outer iteration $k_0\in\{0,1,...\}\cup\{+\infty\}$. 

\begin{lemma}
    Under Assumptions~\ref{assumption1} and~\ref{assumption2}, and conditioning on~\eqref{eq:Ycontainy}, for any $\alpha\in(0,1)$, assume that the algorithmic parameters in Algorithm~\ref{algo:PB} are set to
    \begin{align}
        \theta=\tfrac{\epsilon}{c}, \quad T_k\geq\max\left\{\left(\tfrac{36}{\alpha}D_{V,x_0}M_{z_k}\right)^2,\tfrac{32}{\alpha}\Bar{M}_{z_k}^2\right\}\epsilon^{-2} \quad \text{and} \quad \gamma_{k,t}=\tfrac{D_{V,x_0}}{M_{z_k}\sqrt{T_k}},
        \label{Tgammaleminduct1}
    \end{align}
    where
    \begin{align}
        \Bar{M}_z^2:=\tfrac{8c^2}{z^2}\left[M_f^4+16(L_fD_X+1)^4\right],
        \label{def:barMz}
    \end{align}
    and $D_{V,x_0}$ and $M_{z_k}$ are defined in~\eqref{def:D} and~\eqref{def:M^2}, respectively. Then, for any $k\in\{0,...,k_0\}$, with probability at least $(1-\alpha)^k$, we have
    \begin{align}
        z^*\in[a_s,b_s] \quad \text{and} \quad z_s\geq\tfrac{z^*}{2} \quad \text{for} \quad s\in\{0,...,k\}.
        \label{eq:induction1}
    \end{align}
    \label{lem:induction1}
\end{lemma}
\begin{proof}
    First, in view of the parameter setting in~\eqref{Tgammaleminduct1}, the condition~\eqref{Tgamma3} in Lemma~\ref{lem:opt} is satisfied. Then by Lemma~\ref{lem:opt}, we must have $\psi(z_k)-\psi^*>\epsilon/2$ for $k\in\{0,...,k_0-1\}$.
    
    Now we prove the statement~\eqref{eq:induction1} through an inductive argument. Clearly, since $z^*\in[a_0,b_0]$ and $z_0=b_0/2\geq z^*/2$,~\eqref{eq:induction1} holds for $k=0$. Next, assume that~\eqref{eq:induction1} is true for some $k\in\{0,...,k_0-1\}$, it then suffices to show that $z^*\in[a_{k+1},b_{k+1}]$ and $z_{k+1}\geq z^*/2$ are satisfied with probability at least $1-\alpha$. 
    Prior to this, we first demonstrate that when~\eqref{eq:induction1} holds for $k$,
    \begin{align}
        \psi^*\geq\psi(z_k)+\zeta_k(z^*-z_k)-\tfrac{\epsilon}{2}
        \label{eq:induction2}
    \end{align}
    is satisfied with probability at least $1-\alpha$. In view of~\eqref{Tgamma1} in Lemma~\ref{lem:subdiff},~\eqref{eq:induction2} holds if
    \begin{align}
        T_k\geq\max\left\{\left(\tfrac{36}{\alpha}D_{V,x_0}M_{z_k}\right)^2,\tfrac{32}{\alpha}(z^*-z_k)^2\Tilde{M}_{z_k}^4\right\}\epsilon^{-2} \quad \text{and} \quad \gamma_k=\tfrac{D_{V,x_0}}{M_{z_k}\sqrt{T_k}}.
        \label{leminductionTgamma}
    \end{align}
    Comparing~\eqref{leminductionTgamma} with the parameter settings in~\eqref{Tgammaleminduct1}, it suffices to show $\Bar{M}_{z_k}^2\geq(z^*-z_k)^2\Tilde{M}_{z_k}^4$ (see the definitions of $\Bar{M}_z^2$ and $\Tilde{M}_z^4$ in~\eqref{def:barMz} and~\eqref{def:tildeMz} respectively).
    First observe that 
    \begin{equation}
        \tfrac{(z^*-z_k)^2}{z_k^2}\leq 1.
        \label{eq:z*zk}
    \end{equation}
    To see~\eqref{eq:z*zk}, we consider two cases. Suppose that $z^*\leq z_k$, then we have $(z^*-z_k)^2\leq z_k^2$ and~\eqref{eq:z*zk} follows. Now we suppose $z^*>z_k$. Since $z_k\geq z^*/2$ holds by our inductive hypothesis in~\eqref{eq:induction1}, it follows that $(z^*-z_k)^2\leq (z^*/2)^2$ and hence $(z^*-z_k)^2/z_k^2\leq (z^*/2)^2/z_k^2\leq 1$. In view of the definitions of $\Bar{M}_z^2$ and $\Tilde{M}_z^4$ in~\eqref{def:barMz} and~\eqref{def:tildeMz}, it follows from~\eqref{eq:z*zk} that
    \begin{align}
        \Bar{M}_{z_k}^2=\tfrac{8c^2}{z_k^2}\left[M_f^4+16(L_fD_X+1)^4\right]\geq (z^*-z_k)^2\tfrac{8c^2}{z_k^4}\left[M_f^4+16(L_fD_X+1)^4\right]=(z^*-z_k)^2\Tilde{M}_{z_k}^4.
        \label{Mzbartilde_relation}
    \end{align}
    Hence,~\eqref{eq:induction2} holds with probability at least $1-\alpha$. As a result of $\psi(z_k)-\psi^*>\epsilon/2$ and~\eqref{eq:induction2}, the relations $z_k>z^*$ and $\zeta_k>0$ are equivalent (see the discussion above~\eqref{relation_zeta_z}). Then, by construction of the bisection process (see the discussion below Algorithm~\ref{algo:PB}), $z^*\in[a_{k+1},b_{k+1}]$ is satisfied, which implies $z_{k+1}=(a_{k+1}+b_{k+1})/2\geq b_{k+1}/2\geq z^*/2$. Therefore, we complete the induction.
\end{proof}
\vgap

With the help of Lemma~\ref{lem:opt} and Lemma~\ref{lem:induction1}, we are now ready to establish the convergence of Algorithm~\ref{algo:PB}.
\begin{theorem}
    Under Assumptions~\ref{assumption1} and~\ref{assumption2}, for any $\alpha_0\in (0,1)$ and $\alpha\in(0,1-\alpha_0)$, let
    \begin{align}
        K=\left\lceil\log_2\left\{\tfrac{8cb_0}{\epsilon}\left[\tfrac{\beta^2+4(L_fD_X+1)^2}{\max\{z^*/2,\epsilon/2c\}^2}+1\right]\right\}\right\rceil
        \label{Kthm1}
    \end{align}
    denote the maximum number of outer iterations in Algorithm~\ref{algo:PB}. Also assume that the stopping criterion~\eqref{simplestopping} is applied in Algorithm~\ref{algo:PB}, and the algorithmic parameters in Algorithm~\ref{algo:PB} are set to
    \begin{align}
        m=\left\lceil\tfrac{\beta^2}{\alpha_0}\right\rceil, \quad \theta=\tfrac{\epsilon}{c}, \quad T_k=\left\lceil\max\left\{\left(\tfrac{36K}{\alpha}D_{V,x_0}M_{z_k}\right)^2,\tfrac{32K}{\alpha}\Bar{M}_{z_k}^2\right\}\epsilon^{-2}\right\rceil \quad \text{and} \quad \gamma_{k,t}=\tfrac{D_{V,x_0}}{M_{z_k}\sqrt{T_k}},
        \label{Tgammathm1}
    \end{align}
    where $D_{V,x_0},M_{z_k}$ and $\Bar{M}_{z_k}$ are defined in~\eqref{def:D},~\eqref{def:M^2} and~\eqref{def:barMz}, respectively.
    Then an $\epsilon$-optimal solution for problem~\eqref{l2problem} can be attained within at most $m+2K\Tilde{T}$ number of oracle calls with probability at least $1-\alpha_0-\alpha$, where
    \begin{align*}
        \Tilde{T}=\left\lceil\max\left\{\left(\tfrac{36K}{\alpha}D_{V,x_0}M_{z_{\min}}\right)^2,\tfrac{32K}{\alpha}\Bar{M}_{z_{\min}}^2\right\}\epsilon^{-2}\right\rceil \quad \text{and} \quad z_{\min}=\max\left\{\tfrac{z^*}{2},\tfrac{\epsilon}{2c}\right\}.
    \end{align*}
    \label{thm:converge1}
\end{theorem}
\begin{proof}
    We first condition on the event~\eqref{eq:Ycontainy}. In view of the parameter setting in~\eqref{Tgammathm1} and according to Lemma~\ref{lem:induction1}, with probability at least $(1-\alpha/K)^{k_0}$, we have
    \begin{align}
        z^*\in[a_k,b_k] \quad \text{and} \quad z_k\geq\tfrac{z^*}{2} \quad \text{for} \quad k\in\{0,...,k_0\}.
        \label{eq:thm1induction1}
    \end{align}
    Now let us assume~\eqref{eq:thm1induction1} holds, and let $k'$ be defined in~\eqref{simplestopping}. Then, in view of the discussion right after Lemma~\ref{lem:opt}, we must have 
    \begin{align}
        z_{k'}&=b_{k'+1}, \label{eq:zkbk+1}\\
        b_k &\geq\theta, \, \forall k\in\{0,...,k'\}. \label{eq:bktheta}
    \end{align}
    Below we show that the following statements are satisfied:
    \begin{align}
        (1) \ k'\geq k_0; \qquad (2) \ z^*\in [a_k,b_k] \quad \text{and} \quad z_k\geq\max\left\{\tfrac{z^*}{2},\tfrac{\epsilon}{2c}\right\} \quad \text{for} \quad k\in\{0,...,k_0\}.
        \label{zregion}
    \end{align}
    As a result of~\eqref{zregion}, $T_k$ defined in~\eqref{Tgammathm1} satisfies $T_k \le \Tilde{T}$ for $k\in\{0,...,k_0\}$, since $M_z^2$ and $\Bar{M}_z^2$ defined in~\eqref{def:M^2} and~\eqref{def:barMz}, respectively, monotonically decrease with respect to $z$.
    
    To verify the first claim in~\eqref{zregion}, suppose for contraction that $k'<k_0$. Then since $z^*\in[a_s,b_s]$ holds for $s\in\{0,...,k_0\}$ by~\eqref{eq:thm1induction1}, we have $z^*\in[a_{k'+1},b_{k'+1}]$, which in view of \eqref{eq:zkbk+1} and $b_{k'+1}<\theta$, implies  
     $z^*\leq z_{k'}=b_{k'+1}<\theta$. 
     This observation together with Lemma~\ref{lem:opt} implies that, an $\epsilon$-optimal solution for problem~\eqref{l2problem} has been found in the outer iteration $k'$ or earlier, i.e., $k_0 \le k'$ and thus lead to a contradiction. Furthermore, the second claim in~\eqref{zregion}
     follows from \eqref{eq:thm1induction1},
     the previous conclusion $k'\geq k_0$, and the observation 
     \begin{align}
         z_k\geq\tfrac{\epsilon}{2c} \quad \text{for} \quad k\in\{0,...,k'\},
         \label{eq:zkboundeps}
     \end{align}
     which directly follows from $z_k=(a_k+b_k)/2\geq b_k/2\geq\theta/2=\epsilon/(2c),\forall k\in\{0,...,k'\}$ by utilizing~\eqref{eq:bktheta}.
    
    It now remains to establish an upper bound on $k_0$. First, let us consider the case $z^*\geq\epsilon/(2c)$. In this case, we utilize the Lipschitz continuity of $\psi(z)$, i.e., for any $z>0$ and $(x,y,\lambda) \in X\times Y \times \Lambda$, 
    \begin{align}
        \left|L'_z(x,y,z,\lambda)\right|&=\left|-\tfrac{c}{z^2}\E\left[(F(x,\xi)-y)_+^2\right]+\tfrac{c}{4}\right|\leq\tfrac{2c}{z^2}\E\left[(F(x,\xi)-f(x))_+^2\right]+\tfrac{2c}{z^2}(f(x)-y)^2+\tfrac{c}{4} \nn\\
        &\leq\tfrac{2c}{z^2}\beta^2+\tfrac{2c}{z^2}[2(L_fD_X+1)]^2+\tfrac{c}{4}=\tfrac{2c}{z^2}\left[\beta^2+4(L_fD_X+1)^2\right]+\tfrac{c}{4},
        \label{eq1thm1}
    \end{align}
    where the first inequality follows from $(F(x,\xi)-y)_+\leq (F(x,\xi)-f(x))_++|f(x)-y|$ and Young's inequality, and the second inequality applies~\eqref{assum:fbddvar},~\eqref{eq:Ycontainy} and~\eqref{eq:ybound}. Then, for $k\in\{0,...,k_0-1\}$, we have
    \begin{align}
        \tfrac{\epsilon}{2}&<\psi(z_k)-\psi(z^*)=\min_{x\in X,y\in Y}\max_{\lambda\in\Lambda}L(x,y,z_k,\lambda)-L(x^*,y^*,z^*,\lambda^*)\leq \max_{\lambda\in\Lambda}L(x^*,y^*,z_k,\lambda)-L(x^*,y^*,z^*,\lambda^*) \nn\\
        &=L(x^*,y^*,z_k,0)-L(x^*,y^*,z^*,0)\leq\left(\tfrac{2c\left[\beta^2+4(L_fD_X+1)^2\right]}{\max\{z^*/2,\epsilon/2c\}^2}+\tfrac{c}{4}\right)|z_k-z^*| \nn\\
        &\leq\left(\tfrac{2c\left[\beta^2+4(L_fD_X+1)^2\right]}{\max\{z^*/2,\epsilon/2c\}^2}+\tfrac{c}{4}\right)\tfrac{b_0}{2^{k+1}},
        \label{eq2thm1}
    \end{align}
    where the first equality follows from the definition of $\psi(z)$ in~\eqref{l2minminmaxproblem}, the second equality holds because $y^*=\E[F(x^*,\xi)]$, the third inequality utilizes $z_k>\max\{z^*/2,\epsilon/(2c)\}$ by~\eqref{zregion} and the Lipschitz constant of $L(x,y,z,\lambda)$ with respect to $z$ over $[\epsilon/2c,\infty)$ given by~\eqref{eq1thm1}, and the fourth inequality is satisfied due to
    \begin{align*}
        |z_k-z^*|\leq\tfrac{b_k-a_k}{2}=\tfrac{(b_0-a_0)/2^k}{2}=\tfrac{b_0}{2^{k+1}}
    \end{align*}
    by $z_k=(a_k+b_k)/2$ and $z^*\in[a_k,b_k]$ in~\eqref{zregion}. Taking $k=k_0-1$ in~\eqref{eq2thm1}, we have
    \begin{align}
        k_0\leq\log_2\left\{\tfrac{4cb_0}{\epsilon}\left[\tfrac{\beta^2+4(L_fD_X+1)^2}{\max\{z^*/2,\epsilon/2c\}^2}+\tfrac{1}{8}\right]\right\}.
        \label{eq3thm1}
    \end{align}
    Now we consider the second case $z^*<\epsilon/(2c)$. In this case, for $k\in\{0,...,k_0-1\}$, it follows from~\eqref{eq:bktheta} and~\eqref{zregion} that 
    $
    b_k-a_k\geq\theta-z^*>\theta-\epsilon/(2c)=\epsilon/(2c).
    $
    Using the fact that $b_k-a_k=(b_0-a_0)/2^k=b_0/2^k$ and taking $k=k_0-1$, we have
    \begin{align}
        \tfrac{b_0}{2^{k_0-1}}=b_{k_0-1}-a_{k_0-1}>\tfrac{\epsilon}{2c} \quad \Rightarrow \quad k_0<\log\left(\tfrac{4cb_0}{\epsilon}\right).
        \label{eq4thm1}
    \end{align}
    Combining the two cases~\eqref{eq3thm1} and~\eqref{eq4thm1}, we obtain
    \begin{align}
        k_0+1\leq\log_2\left\{\tfrac{8cb_0}{\epsilon}\left[\tfrac{\beta^2+4(L_fD_X+1)^2}{\max\{z^*/2,\epsilon/2c\}^2}+1\right]\right\}\leq K,
        \label{deriveK}
    \end{align}
    and thus~\eqref{zregion} holds with probability at least $(1-\alpha/K)^{k_0}>(1-\alpha/K)^K\geq 1-\alpha$.
    
    Finally, we conclude the proof by noting that, when $m\geq\beta^2/\alpha_0$ samples are used, the event~\eqref{eq:Ycontainy} that we previously conditioned on occurs with probability at least $1-\alpha_0$.
\end{proof}
\vgap

By Theorem~\ref{thm:converge1}, the oracle complexity for Algorithm~\ref{algo:PB} applied to  problem~\eqref{l2problem} is bounded by
\begin{align}
    \Tilde{\cO}\left(\left(C_1\max\{z^*,\epsilon\}^{-2}+C_2\right)\epsilon^{-2}\right),
    \label{complexityresult}
\end{align}
where
\begin{align}
    C_1\!:=\!M_f^4\!+\!\left(\!L_G^2\!+\!L_f^2\beta^2\!+\!L_f^4D_X^2\!+\!L_f^2D_X^2\sigma_f^2\!\right)\!\left(\!D_{v_X\!,x_0}^2\!+\!L_f^2D_X^2\!\right) \quad \text{and} \quad C_2\!:=\!\left(\!\beta^2\!+\!L_f^2D_X^2\!+\!\sigma_f^2\!\right)\!\left(\!D_{v_X\!,x_0}^2\!+\!L_f^2D_X^2\!\right).
    \label{def:C1C2}
\end{align}
Note that the bound in~\eqref{complexityresult} depends on $z^*=2\E^{1/2}[(F(x^*,\xi)-\E[F(x^*,\xi)])_+^2]$, which is twice of the semideviation of $F(x,\xi)$ at the optimal solution $x^*$. Therefore, the oracle complexity is instance-dependent with respect to $x^*$. Moreover, since $z^*\leq\cO(1)$ by~\eqref{eq:z*bound}, we have $\max\{z^*,\epsilon\}^{-2}\geq\cO(1)$, thus the oracle complexity in~\eqref{complexityresult} can be simplified as
\begin{align}
    \tilde{\cO}\left(\max\{z^*,\epsilon\}^{-2}\epsilon^{-2}\right),
    \label{simplifycomplexityresult}
\end{align}
which will be shown to be nearly optimal in Section~\ref{section:lower}.

As a remark, we should emphasize that, Theorem~\ref{thm:converge1} only guarantees that with probability at least $1-\alpha$, we can attain an $\epsilon$-optimal solution $\Bar{x}_{k_0}$ for problem~\eqref{l2problem} in some outer iteration $k_0\leq K-1$. However, Algorithm~\ref{algo:PB} may not terminate in the outer iteration $k_0$. Suppose that Algorithm~\ref{algo:PB} continues in the outer iteration $k_0$, then since $\psi(z_{k_0})-\psi^*>\epsilon/2$ is not guaranteed, the relation $z_{k_0}>z^*\Leftrightarrow\zeta_{k_0}>0$ in~\eqref{relation_zeta_z} may not hold. In this case, we cannot further guarantee $z^*\in[a_{k_0+1},b_{k_0+1}]$ with probability at least $1-\alpha/K$. As a result, the violation of the condition $z^*\in[a_k,b_k]$ for $k>k_0$ may also lead to the violation of the lower bound on $z_k$ in~\eqref{zregion}. Therefore, $z_k$ may continue to decrease when $k\geq k_0$, until Algorithm terminates in the outer iteration $k'\geq k_0$ with $z_{k'}\in\cO(\epsilon)$. In this case, we can only guarantee $T_k\leq\Tilde{\cO}(\epsilon^{-4})$ for $k\in\{0,...,k'\}$, and the worst-case complexity for solving problem~\eqref{l2problem} using Algorithm~\ref{algo:PB} will be $\Tilde{\cO}(\epsilon^{-4})$.

In the next subsection, we will design an enhanced stopping criterion for Algorithm~\ref{algo:PB}. Whenever it occurs that $z_k\leq z^*$ continues to decrease in some outer iterations $k\geq k_0$, this new stopping criterion will be triggered to terminate Algorithm~\ref{algo:PB} to maintain the complexity result in~\eqref{simplifycomplexityresult}.

\subsection{Enhanced Stopping Criterion}

The new stopping criterion for Algorithm~\ref{algo:PB} is defined as follows. Let $K > 0$ be the maximum number of outer iterations,  Algorithm~\ref{algo:PB} will terminate when at least one of the following two conditions is satisfied 
\begin{align}
    &(1) \quad b_{k+1}=z_k<\theta, \label{originalstop} \\
    &(2) \quad \hat{\psi}(b_k)-\hat{\psi}(z_k)<\tfrac{\epsilon}{4} \qquad \text{and} \qquad [a_{k+1},b_{k+1}] =[a_k,z_k],
    \label{stopcriteria}
\end{align}
for some outer iteration $k\in\{0,...,K-2\}$. Here $\hat{\psi}$ is an estimation of the function value $\psi$
given by
\begin{align}
    \hat{\psi}(z_k)=\tfrac{1}{T'_k}\tsum_{i=1}^{T'_k}\cL(\Bar{x}_k,\Bar{y}_k,z_k,\Bar{\lambda}_k,\xi''_{k,i}),
    \label{eq:hatpsi}
\end{align}
where $\xi''_{k,i},i=1,...,T'_k$ are independent of the samples $\xi_{k,t},t=0,...,T_k-1$ and $\xi'_{k,t},t=0,...,T_k-1$ used for computing $\Bar{u}_k=(\Bar{x}_k,\Bar{y}_k,\Bar{\lambda}_k)$ and $\zeta_k$ in Algorithm~\ref{algo:SMD}, respectively. Clearly, we have $\E[\hat{\psi}(z_k)|\Bar{u}_k]=L(\Bar{x}_k,\Bar{y}_k,z_k,\Bar{\lambda}_k)$.
Note that for any $k>0$, there exists $\Bar{k}<k$ such that $z_{\Bar{k}}=b_k$, hence we only need to estimate $\psi(z_k)$ for $k\ge 0$, and to compute $\hat{\psi}(b_0)$ similarly according to~\eqref{eq:hatpsi} before Algorithm~\ref{algo:PB} starts.
Also observe that the condition~\eqref{originalstop} is equivalent to the previous stopping criterion~\eqref{simplestopping}, while ~\eqref{stopcriteria} is first introduced here to improve the worst-case complexity $\tilde{\cO}(\epsilon^{-4})$ for Algorithm~\ref{algo:PB}. 

\vgap

The following result shows the quality of the estimation $\hat{\psi}$.

\begin{lemma}
    Under Assumptions~\ref{assumption1} and~\ref{assumption2}, and conditioning on~\eqref{eq:Ycontainy}, for any $\alpha\in (0,1)$, suppose that 
    \begin{align}
        T_k\geq\left(\tfrac{144D_{V,x_0}M_{z_k}}{\alpha\epsilon}\right)^2,\gamma_{k,t}=\tfrac{D_{V,x_0}}{M_{z_k}T_k} \qquad \text{and} \qquad T'_k\geq\tfrac{1024}{\alpha\epsilon^2}\left\{\tfrac{8c^2}{z_k^2}\left[M_f^2+16(L_fD_X+1)^2\right]+\beta^2\right\},
        \label{stopT}
    \end{align}
    for each outer iteration of Algorithm~\ref{algo:PB}, where $D_{V,x_0}$ and $M_{z_k}$ are defined in~\eqref{def:D} and~\eqref{def:M^2}, respectively, and $T'_k$ is used in~\eqref{eq:hatpsi}. Then for any outer iteration $k$, we have
    \begin{align*}
        \left|\hat{\psi}(z_k)-\psi(z_k)\right|\leq\tfrac{\epsilon}{8}
    \end{align*}
    with probability at least $1-\alpha$.
    \label{lem:stop1}
\end{lemma}
\begin{proof}
    For any outer iteration $k$ in Algorithm~\ref{algo:PB} and any $z_k>0$, to provide the probability guarantee for $|\hat{\psi}(z_k)-\psi(z_k)|\leq\epsilon/8$, we consider the following two events
    \begin{align}
        C:=\left\{\left|\E\left[\hat{\psi}(z_k)\mid\Bar{u}_k\right]-\psi(z_k)\right|\leq\tfrac{\epsilon}{16}\right\} \quad \text{and} \quad D:=\left\{\left|\hat{\psi}(z_k)-\E\left[\hat{\psi}(z_k)\mid\Bar{u}_k\right]\right|\leq\tfrac{\epsilon}{16}\right\},
        \label{def:eventCD}
    \end{align}
    respectively.
    
    Let us first consider the event $C$. Let $u_{k,t}$ denote the iterates in Algorithm~\ref{algo:SMD} called by Algorithm~\ref{algo:PB} in the outer iteration $k$. For any $\alpha>0$, utilizing $T_k\geq [144D_{V,x_0}M_{z_k}/(\alpha\epsilon)]^2$ and $\gamma_{k,t}=D_{V,x_0}/(M_{z_k}T_{z_k})$ by~\eqref{stopT} in Lemma~\ref{lem:SMDconverge}, we have (see the definition of $\Bar{l}'_z$ in~\eqref{def:barlz})
    \begin{align}
        \E\left[\left|\E\left[\hat{\psi}(z_k)\mid\Bar{u}_k\right]-\psi(z_k)\right|\right]\leq\left(\tsum_{t=0}^{T_k-1}\gamma_{k,t}\right)^{-1}\E\left[\max_{u\in U}\tsum_{t=0}^{T_k-1}\gamma_{k,t}\Bar{l}'_{z_k}(u_{k,t})^{\top}(u_{k,t}-u)\right]\leq\tfrac{\alpha\epsilon}{32},
        \label{eq:eventCbound}
    \end{align}
    then by Markov's inequality,
    \begin{align}
        \P(C)=\P\left(\left|\E\left[\hat{\psi}(z_k)\mid\Bar{u}_k\right]-\psi(z_k)\right|\leq\tfrac{\epsilon}{16}\right)\geq 1-\tfrac{16}{\epsilon}\times\tfrac{\alpha\epsilon}{32}=1-\tfrac{\alpha}{2}.
        \label{stopprob1}
    \end{align}
    
    Now we consider the event D. For any given $x\in X,y\in Y,z>0$ and $\lambda\in\Lambda$, we have
    \begin{align}
        &\mathrm{Var}\left(\cL(x,y,z,\lambda,\xi)\right)=\mathrm{Var}\left(\tfrac{c}{z}(F(x,\xi)-y)_+^2+\lambda F(x,\xi)\right)\leq\tfrac{2c^2}{z^2}\E\left[(F(x,\xi)-y)_+^4\right]+2\E\left[(F(x,\xi)-f(x))^2\right] \nn\\
        &\leq\tfrac{16c^2}{z^2}\left\{\E\left[(F(x,\xi)-\E[F(x,\xi)])_+^4\right]+(\E[F(x,\xi)]-y)^4\right\}+2\beta^2\leq\tfrac{16c^2}{z^2}\left[M_f^2+16(L_fD_X+1)^2\right]+2\beta^2,
        \label{eq:stopvar}
    \end{align}
    where the first inequality follows from the Cauchy--Schwarz inequality and $\lambda\in[0,1]$, the second inequality applies $(F(x,\xi)-y)_+\leq (F(x,\xi)-f(x))_++|f(x)-y|$,~\eqref{assum:fbddvar} and Young's inequality, and the third inequality follows from~\eqref{eq:Ycontainy}, ~\eqref{eq:ybound} and Assumption~\ref{assumption1}. Similar to~\eqref{def:calF}, for any $s=0,...,T_k-1$, let us define the $\sigma$-algebra $\cF_{k,s}$ associated with the probability space supporting the random process in the outer iteration $k$ as
    \begin{align}
        \cF_{k,s}:=\sigma(u_{k,t}\mid t=0,...,s).
        \label{def:cFks}
    \end{align}
    Then we have
    \begin{align}
        \P(D\mid\cF_{k,T_k-1})&=\P\left(\left|\hat{\psi}(z_k)-\E\left[\hat{\psi}(z_k)\mid\Bar{u}_k\right]\right|\leq\tfrac{\epsilon}{16}\mid\cF_{k,T_k-1}\right)\geq 1-\tfrac{\mathrm{Var}\left(\hat{\psi}(z_k)\mid\cF_{k,T_k-1}\right)}{(\epsilon/16)^2} \nn\\
        &\geq 1-\tfrac{256}{\epsilon^2T'_k}\mathrm{Var}\left(\cL(\Bar{x}_k,\Bar{y}_k,z_k,\Bar{\lambda}_k,\xi)\mid\cF_{k,T_k-1}\right)\geq 1-\tfrac{\alpha}{2},
        \label{stopprob2}
    \end{align}
    where the first inequality utilizes Chebyshev's inequality and the independence of the samples $\xi_{k,t},t=0,...,T_k-1$ and $\xi''_{k,i},i=1,...,T'_k$, the second inequality follows from~\eqref{eq:hatpsi}, and the third inequality applies the bound on $T'_k$ in~\eqref{stopT} and uses~\eqref{eq:stopvar}. It follows from~\eqref{stopprob2} that
    \begin{align}
        \P(D\mid C)\geq 1-\tfrac{\alpha}{2}.
        \label{eq:probDonC}
    \end{align}
    Combining~\eqref{stopprob1} and~\eqref{eq:probDonC}, we conclude $\P(|\hat{\psi}(z_k)-\psi(z_k)|\leq\epsilon/8)\geq \P(C)\P(D\mid C)\geq (1-\alpha/2)^2\geq 1-\alpha$.
\end{proof}
\vgap

Lemma~\ref{lem:stop1} shows that, for any particular outer iteration $k$, we can guarantee $|\hat{\psi}(z_k)-\psi(z_k)|\leq\cO(\epsilon)$ with probability at least $1-\alpha$ when $T_k,T'_k\geq\cO(\alpha^{-2}(1+z_k^{-2})\epsilon^{-2})$ are both satisfied. 
We now extend the probability guarantee in Lemma~\ref{lem:stop1} and that in Lemma~\ref{lem:subdiff} across multiple outer iterations.

\begin{lemma}
    Under Assumptions~\ref{assumption1} and~\ref{assumption2}, and conditioning on~\eqref{eq:Ycontainy}, for any $\alpha\in(0,1)$, suppose that the algorithmic parameters in Algorithm~\ref{algo:PB} are set to
    \begin{align}
        \theta=\tfrac{\epsilon}{c}, \quad T_k\geq\max\left\{\left(\tfrac{216}{\alpha}D_{V,x_0}M_{z_k}\right)^2,\tfrac{48}{\alpha}\Bar{M}_{z_k}^2\right\}\epsilon^{-2} \quad \text{and} \quad \gamma_{k,t}=\tfrac{D_{V,x_0}}{M_{z_k}\sqrt{T_k}},
        \label{Tkstop2}
    \end{align}
    where $D_{V,x_0}$, $M_{z_k}$ and $\Bar{M}_{z_k}$ are defined in~\eqref{def:D},~\eqref{def:M^2} and~\eqref{def:barMz} respectively. Moreover, assume that $\hat{\psi}(b_0)$ and $\hat{\psi}(z_k)$ for $k\in\{0,1,...\}$ are computed in Algorithm~\ref{algo:PB} using~\eqref{eq:hatpsi} with
    \begin{align}
        T'_k\geq\tfrac{1536}{\alpha}\left\{\tfrac{8c^2}{z_k^2}\left[M_f^2+16(L_fD_X+1)^2\right]+\beta^2\right\}\epsilon^{-2}.
        \label{Tk'stop2}
    \end{align}
    Then for any $k\in\{0,1,...\}$, with probability at least $(1-\alpha)^{k+1}$, we have:
    \begin{itemize}
        \item [(1)] $z^*\in [a_s,b_s]$ and $z_s\geq z^*/2$ for $s\in\{0,...,\min\{k_0,k\}\}$;
        \item [(2)] $|\hat{\psi}(b_0)-\psi(b_0)|\leq\epsilon/8$ and $|\hat{\psi}(z_s)-\psi(z_s)|\leq\epsilon/8$ for $s\in\{0,...,k-1\}$.
    \end{itemize}
    \label{lem:stop2}
\end{lemma}
\begin{proof}
    We construct this proof using inductive arguments, similar to those in Lemma~\ref{lem:induction1}. First, we clearly have $z^*\in [a_0,b_0]$ and $z_0=b_0/2\geq z^*/2$. Moreover, in view of the parameter settings in~\eqref{Tkstop2} and~\eqref{Tk'stop2}, as well as the condition~\eqref{stopT} in Lemma~\ref{lem:stop1}, we have $\P(|\hat{\psi}(b_0)-\psi(b_0)|\leq\epsilon/8)\geq 1-\alpha$. Thus, (1) and (2) are satisfied with probability at least $1-\alpha$ for $k=0$.

    Now, assume that (1) and (2) are true for any $k\in\{0,1,...\}$. Below we consider two cases. First, let us suppose $k<k_0$. In this case, it suffices to show that $z^*\in [a_{k+1},b_{k+1}],z_{k+1}\geq z^*/2$ and $|\hat{\psi}(z_{k})-\psi(z_k)|\leq\epsilon/8$ hold with probability at least $1-\alpha$. Considering the outer iteration $k$ in Algorithm~\ref{algo:PB}, we can show that
    \begin{align}
        \left(\tsum_{t=0}^{T_k-1}\gamma_{k,t}\right)^{-1}\E\left[\max_{u\in U}\tsum_{t=0}^{T_k-1}\gamma_{k,t}\Bar{l}'_{z_k}(u_{k,t})^{\top}(u_{k,t}-u)\right]\leq\tfrac{\epsilon}{16}
        \label{eventAC}
    \end{align}
    holds with probability at least $1-\alpha/3$, by adopting the same approach as in~\eqref{eq:eventCbound}-\eqref{stopprob1} and using~\eqref{Tkstop2}. Notice that the events $A$ and $C$ defined in~\eqref{def:eventA} and~\eqref{def:eventCD}, respectively, occur when~\eqref{eventAC} is satisfied. Moreover, each one of the events $B$ and $D$, defined in~\eqref{def:eventB} and ~\eqref{def:eventCD}, respectively, occurs with probability at least $1-\alpha/3$, conditioned on~\eqref{eventAC}. These results can be established by using the same approach as in~\eqref{eq2.5lem2algo}-\eqref{eq:prob2} and~\eqref{eq:stopvar}-\eqref{eq:probDonC}, respectively, and applying the parameter settings in~\eqref{Tkstop2} and~\eqref{Tk'stop2}. As a result, letting $E$ denote the event in~\eqref{eventAC}, we have
    \begin{align}
        \P(A\cap B\cap C\cap D)\geq\P(B\cap D\cap E)= \P(E)\P(B\mid E)\P(D\mid E)\geq \left(1-\tfrac{\alpha}{3}\right)^3\geq1-\alpha,
    \end{align}
    where the equality follows from the independence of the events $B$ and $D$ given~\eqref{eventAC}, since the samples $\xi'_{k,t},t=0,...,T_k-1$ used to compute $\zeta_k$ are independent of the samples $\xi''_{k,i},i=1,...,T'_k$ used to compute $\hat{\psi}(z_k)$. Next, in view of Lemma~\ref{lem:subdiff} and Lemma~\ref{lem:stop1}, when the events $A,B,C$ and $D$ all occur, the following relations hold:
    \begin{align*}
        \psi^*\geq\psi(z_k)+\zeta_k(z^*-z_k)-\tfrac{\epsilon}{2} \quad \text{and} \quad |\hat{\psi}(z_k)-\psi(z_k)|\leq\epsilon/8.
    \end{align*}
    According to the inductive arguments in Lemma~\ref{lem:induction1}, the first relation above, combined with the assumptions $z^*\in [a_k,b_k]$ and $z_k\geq z^*/2$, implies that $z^*\in [a_{k+1},b_{k+1}]$ and $z_{k+1}\geq z^*/2$. Thus, we complete the case $k<k_0$. Now we suppose $k\geq k_0$. In this case, we only need to show the event (2), i.e., $|\hat{\psi}(z_{k})-\psi(z_k)|\leq\epsilon/8$ holds with probability at least $1-\alpha$, which directly follows from Lemma~\ref{lem:stop1} and the parameter settings in~\eqref{Tkstop2} and~\eqref{Tk'stop2}.  
\end{proof}
\vgap

With the help of Lemma~\ref{lem:stop2}, we are ready to show the reliability of our enhanced stopping criterion~\eqref{originalstop}-\eqref{stopcriteria} in Theorem~\ref{thm:stop} below.

\begin{theorem}
    Under Assumptions~\ref{assumption1} and~\ref{assumption2}, for any $\alpha_0\in (0,1)$ and $\alpha\in(0,1-\alpha_0)$, suppose that the algorithmic parameters in Algorithm~\ref{algo:PB} are set to
    \begin{align}
        m=\left\lceil\tfrac{\beta^2}{\alpha_0}\right\rceil, \quad \theta=\tfrac{\epsilon}{c}, \quad T_k=\left\lceil\max\left\{\left(\tfrac{216K}{\alpha}D_{V,x_0}M_{z_k}\right)^2,\tfrac{48K}{\alpha}\Bar{M}_{z_k}^2\right\}\epsilon^{-2}\right\rceil \quad \text{and} \quad \gamma_{k,t}=\tfrac{D_{V,x_0}}{M_{z_k}\sqrt{T_k}},
        \label{Tkthm2}
    \end{align}
    where $D_{V,x_0}$, $M_{z_k}$ and $\Bar{M}_{z_k}$ are defined in~\eqref{def:D},~\eqref{def:M^2} and~\eqref{def:barMz} respectively. Moreover, assume that the maximum number of outer iterations in Algorithm~\ref{algo:PB} is set to
    \begin{align}
        K=\left\lceil\log_2\left\{\tfrac{8cb_0}{\epsilon}\left[\tfrac{\beta^2+4(L_fD_X+1)^2}{(\epsilon/2c)^2}+1\right]\right\}\right\rceil,
        \label{Kthm2}
    \end{align}
    and the stopping criterion~\eqref{originalstop}-\eqref{stopcriteria} is applied, where $\hat{\psi}(\cdot)$ is computed using~\eqref{eq:hatpsi} with
    \begin{align}
        T'_k=\left\lceil\tfrac{1536K}{\alpha}\left\{\tfrac{8c^2}{z_k^2}\left[M_f^2+16(L_fD_X+1)^2\right]+\beta^2\right\}\epsilon^{-2}\right\rceil.
        \label{Tk'thm2}
    \end{align}
    Then Algorithm~\ref{algo:PB} must terminate in some outer iteration $\tilde{k}\leq K-1$, and with probability at least $1-\alpha_0-\alpha$, an $\epsilon$-optimal solution for problem~\eqref{l2problem} is attained in some outer iteration $k_0\leq\tilde{k}$, and we have $z_k\geq\max\{z^*/4,\epsilon/(2c)\}$ for $k\in\{0,...,\tilde{k}\}$.
    \label{thm:stop}
\end{theorem}
\begin{proof}
    This proof is built on Lemma~\ref{lem:stop2} and Theorem~\ref{thm:converge1}, and we still condition on~\eqref{eq:Ycontainy}. First, notice that under $k=k_0$, the statement (1) in Lemma~\ref{lem:stop2} is equivalent to the inductive statement~\eqref{eq:induction1} in Lemma~\ref{lem:induction1}, which,  through~\eqref{zregion} in Theorem~\ref{thm:converge1}, implies the following observations: (a) the stopping criterion~\eqref{originalstop} can only be triggered in some outer iteration $k'\geq k_0$, (b) $z_k\geq\epsilon/(2c)$ holds for $k\in\{0,...,k'\}$, and (c) $k_0\leq K-1$ is satisfied. Consequently, in view of the parameter settings in~\eqref{Tkthm2} and~\eqref{Tk'thm2}, taking $k=K-1$ in Lemma~\ref{lem:stop2}, with probability at least $(1-\alpha/K)^K\geq 1-\alpha$, we have
    \begin{align}
        z^*\in [a_s,b_s] \quad \text{and} \quad z_s\geq z^*/2 \quad \text{for} \quad s\in\{0,...,k_0\},
        \label{statement1thm2}
    \end{align}
    as well as
    \begin{align}
        |\hat{\psi}(b_0)-\psi(b_0)|\leq\epsilon/8 \quad \text{and} \quad |\hat{\psi}(z_s)-\psi(z_s)|\leq\epsilon/8 \quad \text{for} \quad s\in\{0,...,K-2\}.
        \label{statement2thm2}
    \end{align}

    Below we discuss the reliability of the stopping criterion~\eqref{stopcriteria} under two cases. Let us first consider any outer iteration $k\in\{0,...,k_0-1\}$, in which $\psi(z_k)-\psi^*>\epsilon/2$ holds by Lemma~\ref{lem:opt}. We show that the stopping criterion~\eqref{stopcriteria} will not be triggered. Below we consider two subcases.
    \begin{itemize}
        \item (a) $z_k>z^*$. In this subcase, we have
        \begin{align}
            \psi(b_k)-\psi(z_k)\geq\psi'(z_k)(b_k-z_k)\geq\psi'(z_k)(z_k-z^*)\geq\psi(z_k)-\psi^*>\tfrac{\epsilon}{2},
            \label{eq1thmstop}
        \end{align}
        where the first and third inequalities hold due to the convexity of $\psi(z)$, and the second inequality follows from $\psi'(z_k)>0$ and $b_k-z_k=z_k-a_k\geq z_k-z^*$. Since we have $|\hat{\psi}(b_k)-\psi(b_k)|\leq\epsilon/8$ and $|\hat{\psi}(z_k)-\psi(z_k)|\leq\epsilon/8$ by~\eqref{statement2thm2}, in view of~\eqref{eq1thmstop}, $\hat{\psi}(b_k)-\hat{\psi}(z_k)>\epsilon/4$ holds. Thus, the stopping criterion~\eqref{stopcriteria} is not triggered.
        \item (b) $z_k<z^*$. In this subcase, we have $[a_{k+1},b_{k+1}]=[z_k,b_k]$ that brackets $z^*$, and hence the stopping criterion~\eqref{stopcriteria} is not triggered.
    \end{itemize}

    Notice that the previous discussion combined with $K-1\geq k_0$ and $k'\geq k_0$ implies $\tilde{k}\geq k_0$. Now, for the outer iterations $k\in\{k_0,...,\tilde{k}\}$, we show that $z_k\geq z^*/4$ is satisfied. Since $z^*\in[a_{k_0},b_{k_0}]$ holds by~\eqref{statement1thm2}, let us assume $[a_{k+1},b_{k+1}]=[a_k,z_k]$ for $k=k_0,k_0+1,...$, until $k=\Bar{k}\leq\tilde{k}$. It follows that either $[a_{\Bar{k}+1},b_{\Bar{k}+1}]=[z_{\Bar{k}},b_{\Bar{k}}]$ holds or $\bar{k}=\tilde{k}$. Below we consider two subcases.
    \begin{itemize}
        \item (a) $b_{\Bar{k}}\geq z^*$. In this subcase, since $b_{k+1}=z_k\leq b_k$ for $k\in\{k_0,...,\Bar{k}-1\}$, we have $b_k\geq b_{\Bar{k}}\geq z^*$ for $k\in\{k_0,...,\Bar{k}\}$. Thus, $z_k=(a_k+b_k)/2\geq z^*/2$ holds for $k\in\{k_0,...,\Bar{k}\}$. If $\bar{k}<\tilde{k}$, then  $[a_{\Bar{k}+1},b_{\Bar{k}+1}]=[z_{\Bar{k}},b_{\Bar{k}}]$, which implies $z_k\geq a_{\Bar{k}+1}=z_{\bar{k}}\geq z^*/2$ for $k\in\{\Bar{k}+1,...,\tilde{k}\}$.
        \item (b) $b_{\Bar{k}}<z^*$. In this subcase, since $b_{k_0}\geq z^*$ holds and $b_k$ monotonically decreases for $k\in\{k_0,...,\Bar{k}\}$, there exists $k_1\in\{k_0+1,...,\Bar{k}\}$ such that $b_{k_1}<z^*$ and $b_{k_1-1}\geq z^*$. Notice that $z_k=(a_k+b_k)/2\geq b_{k_1-1}/2\geq z^*/2$ holds for $k\in\{k_0,...,k_1-1\}$. Now we consider the outer iteration $k_1$, where $z_{k_1}=(a_{k_1}+b_{k_1})/2\geq b_{k_1}/2=[(a_{k_1-1}+b_{k_1-1})/2]/2\geq b_{k_1-1}/4\geq z^*/4$. Due to $z_{k_1}<b_{k_1}<z^*$ and the convexity of $\psi$, $\psi(z_{k_1})>\psi(b_{k_1})$ holds. Then, since $|\hat{\psi}(b_{k_1})-\psi(b_{k_1})|\leq\epsilon/8$ and $|\hat{\psi}(z_{k_1})-\psi(z_{k_1})|\leq\epsilon/8$ by~\eqref{statement2thm2}, we have $\hat{\psi}(b_{k_1})-\hat{\psi}(z_{k_1})<\epsilon/4$. Now we consider two scenarios: Suppose that $[a_{k_1+1},b_{k_1+1}]=[a_{k_1},z_{k_1}]$, then the stopping criterion~\eqref{stopcriteria} is triggered and $\tilde{k}=k_1$; otherwise, we have $[a_{k_1+1},b_{k_1+1}]=[z_{k_1},b_{k_1}]$, then $z_k\geq a_{k_1+1}=z_{k_1}\geq z^*/4$ holds for $k\in\{k_1+1,...,\tilde{k}\}$.
    \end{itemize}

    We conclude this proof by combining the discussions above.
\end{proof}
\vgap

Theorem~\ref{thm:stop} implies that the enhanced stopping criterion~\eqref{originalstop}-\eqref{stopcriteria} helps maintain the oracle complexity $\Tilde{\cO}(\max\{z^*,\epsilon\}^{-2}\epsilon^{-2})$ in~\eqref{simplifycomplexityresult} for Algorithm~\ref{algo:PB} applied to problem~\eqref{l2problem}. It is worth mentioning that we finally adopt the solution $\Bar{x}_k$ (see $\Bar{u}_k$ in Algorithm~\ref{algo:PB} where $\Bar{u}_k=(\Bar{x}_k,\Bar{y}_k,\Bar{\lambda}_k)$), in which $k=\argmin_{k=0,...,\tilde{k}}\hat{\psi}(z_k)$.

\subsection{High Probability Guarantees}
\label{sec:highprob}

In this subsection, we establish high probability guarantees for attaining an $\epsilon$-optimal solution for problem~\eqref{l2problem} using Algorithm~\ref{algo:PB}.

\subsubsection{High Probability Guarantees under Sub-Gaussian Assumptions}

Our first approach is to make the following sub-Gaussian assumption.

\begin{assumption}
    Let $G(x,\xi)=(F(x,\xi)-\E[F(x,\xi)])_+^2$, then we have
    \begin{align*}
        &\textit{(a)} \ \E\left[\exp\left\{\left\|F'(x,\xi)\right\|_{*,X}^2\big/\sigma_1^2\right\}\right]\leq\exp\{1\}, \qquad \textit{(b)} \ \E\left[\exp\left\{[F(x,\xi)-f(x)]^2\big/\sigma_2^2\right\}\right]\leq\exp\{1\}, \\
        &\textit{(c)} \ \E\left[\exp\left\{\left\|G'(x,\xi)\right\|_{*,X}^2\big/\sigma_3^2\right\}\right]\leq\exp\{1\}, \qquad \textit{(d)} \ \E\left[\exp\left\{(F(x,\xi)-f(x))_+^4\big/\sigma_4^4\right\}\right]\leq\exp\{1\}.
    \end{align*}
    for any $x\in X$.
    \label{assumption3}
\end{assumption}
Note that Assumption~\ref{assumption3} is sufficient to imply the conditions~\eqref{assum:fbddvar} and~\eqref{eq:Flips}, as well as Assumptions~\ref{assumption1} and~\ref{assumption2}. To present the convergence of Algorithm~\ref{algo:PB} under Assumption~\ref{assumption3}, we first define
\begin{align}
    \sigma_z^2:=4\max\left\{c_{z1}\sigma_1^2,c_{z2}\sigma_2^2,c_{z3}\sigma_3^2,c_{z4}\right\},
    \label{def:sigma}
\end{align}
where
\begin{align}
    c_{z1}:=\tfrac{96c^2}{z^2}(L_fD_X+1)^2+6, \ c_{z2}:=\tfrac{12c^2}{z^2}(L_f^2+1)+2, \ c_{z3}:=\tfrac{3c^2}{z^2}, \ c_{z4}:=\left(\tfrac{48c^2}{z^2}+8\right)(L_fD_X+1)^2+3.
    \label{def:c1234}
\end{align}
Also note that we defer several results with their proofs, built upon Assumption~\ref{assumption3}, to Appendix A. Based on these results, we establish the high probability convergence for Algorithm~\ref{algo:PB} in Theorem~\ref{thm:converge1highprob} below, which improves the result in Theorem~\ref{thm:converge1}. The proof of Theorem~\ref{thm:converge1highprob} is also deferred to Appendix A. As a remark, a similar improvement can also be made to Theorem~\ref{thm:stop}, and we omit the details here.

\begin{theorem}
    Under Assumption~\ref{assumption3}, for any $\alpha_0\in (0,1)$ and $\alpha\in(0,1-\alpha_0)$, let
    \begin{align*}
        K=\left\lceil\log_2\left\{\tfrac{8cb_0}{\epsilon}\left[\tfrac{\beta^2+4(L_fD_X+1)^2}{\max\{z^*/2,\epsilon/2c\}^2}+1\right]\right\}\right\rceil
    \end{align*}
    denote the maximum number of the outer iterations in Algorithm~\ref{algo:PB}. Also assume that the stopping criterion~\eqref{simplestopping} is applied in Algorithm~\ref{algo:PB}, and the algorithmic parameters in Algorithm~\ref{algo:PB} are set to
    \begin{align}
        m=\left\lceil3\ln\!\left(\!\tfrac{2}{\alpha_0}\!\right)\!\sigma_2^2\right\rceil, \quad \theta=\tfrac{\epsilon}{c}, \quad T_k=\left\lceil\max\left\{4\!\left[9\!+\!5\ln\!\left(\!\tfrac{2K}{\alpha}\!\right)\!\right]^2\!D_{V,x_0}^2\sigma_{z_k}^2,48\ln\!\left(\!\tfrac{2K}{\alpha}\!\right)\!\Bar{\sigma}_{z_k}^2\right\}\epsilon^{-2}\right\rceil \quad \text{and} \quad \gamma_{k,t}=\tfrac{D_{V,x_0}}{\sigma_{z_k}\sqrt{T_k}},
        \label{Tgammakthmhighprob}
    \end{align}
    where
    \begin{align*}
        \Bar{\sigma}_z^2:=\tfrac{32c^2}{z^2}\max\left\{\sigma_4^4,32(L_fD_X+1)^4+\beta^4\right\}
    \end{align*}
    and $D_{V,x_0}$ and $\sigma_{z_k}$ are defined in~\eqref{def:D} and~\eqref{def:sigma}, respectively. Then an $\epsilon$-optimal solution for problem~\eqref{l2problem} can be attained within at most $m+2K\Tilde{T}$ number of oracle calls with probability at least $1-\alpha_0-\alpha$, where
    \begin{align*}
        \Tilde{T}=\left\lceil\max\left\{4\left[9+5\ln\left(\tfrac{2K}{\alpha}\right)\right]^2D_{V,x_0}^2\sigma_{z_{\min}}^2,48\ln\left(\tfrac{2K}{\alpha}\right)\Bar{\sigma}_{z_{\min}}^2\right\}\epsilon^{-2}\right\rceil \quad \text{and} \quad z_{\min}=\max\left\{\tfrac{z^*}{2},\tfrac{\epsilon}{2c}\right\}.
    \end{align*}
    \label{thm:converge1highprob}
\end{theorem}

\subsubsection{High Probability Guarantees under Multiple Sample Trajectories and Robust Distance Approximation}
\label{sec:highprobRDA}

Instead of relying on Assumption~\ref{assumption3}, we establish our high probability convergence guarantee through an alternative approach. Specifically, we derive high probability bounds for the events $A$ and $B$ defined in~\eqref{def:eventA} and~\eqref{def:eventB} of Lemma~\ref{lem:subdiff}, respectively. For the event $A$, we independently execute Algorithm~\ref{algo:SMD} $m_1$ times, selecting in each run a high-quality estimation of $\max_{u\in U}(\tsum_{t=0}^{T-1}\gamma_t)^{-1}(\tsum_{t=1}^{T-1}\gamma_t\bar{l}'_z(u_t)^{\top}(u_t-u))$ via a robust distance approximation (RDA) procedure with $m_2$ independent sample trajectories, and then take the minimum of these $m_1$ estimations. For the event $B$, we again apply the RDA procedure with $m_3$ independent sample trajectories to estimate $\zeta_T$.

In order to obtain a high probability bound for the event $A$, in each outer iteration $k$ of Algorithm~\ref{algo:PB}, we independently invoke Algorithm~\ref{algo:SMD} $m_1$ times with the same input $\{u_0,z_k,T_k,\{\gamma_{k,t}\}_{t=0}^{T_k-1}\}$. In the $i$-th call ($i\in\{1,2,...,m_1\}$) of Algorithm~\ref{algo:SMD}, the iterates are denoted as $u^i_{k,t},t=0,1,...,T_k$. For each $t$, we generate the stochastic subgradient vector $l'_{z_k}(u^i_{k,t},\xi^{i,j}_{k,t}), j=1,2,...,m_2$, using $m_2$ independent samples. Following the definition of event $A$ in~\eqref{def:eventA}, our goal is to obtain a reliable estimation of
\begin{align}
    S^i_{k,t}:=\max_{u\in U}\left(\tsum_{t=0}^{T_k-1}\gamma_{k,t}\right)^{-1}\left(\tsum_{t=0}^{T_k-1}\gamma_{k,t}\Bar{l}'_{z_k}\left(u^i_{k,t}\right)^{\top}\left(u^i_{k,t}-u\right)\right).
    \label{def:RDAobject1}
\end{align}
To this end, we utilize the following robust distance approximation (RDA) procedure, which was first introduced in~\cite{nemirovskij1983problem} and later also analyzed in~\cite[Algorithm 2]{hsu2016loss}, as shown in Algorithm~\ref{algo:RDA}.

\begin{algorithm}
\caption{Robust Distance Approximation (RDA)}
\label{algo:RDA}
\begin{algorithmic}
\State{\textbf{Input:} $w_1,...,w_N$.}
\For{$j=1,...,N$}
\State{$r_j=\min\left\{r\geq 0:\left|B(w_j,r)\cap\{w_j\}_{j=1}^N\right|\geq N/2\right\}$}
\EndFor
\State{Set $j^*=\argmin_{j\in\{1,...,N\}}r_j$.}
\State{\textbf{Output:} $\hat{w}=w_{j^*}$}
\end{algorithmic}
\end{algorithm}
\vgap

For each $i\in\{1,...,m_1\}$, taking $N=m_2$ and
\begin{align}
    w_j=\hat{S}^{i,j}_{k,t}:=\max_{u\in U}\left(\tsum_{t=0}^{T_k-1}\gamma_{k,t}\right)^{-1}\left(\tsum_{t=0}^{T-1}\gamma_{k,t}l'_{z_k}\left(u^i_{k,t},\xi^{i,j}_{k,t}\right)^{\top}\left(u^i_{k,t}-u\right)\right),\quad j=1,2,...,N
    \label{def:RDAobject2}
\end{align}
as the inputs of Algorithm~\ref{algo:RDA}, it then outputs $\hat{w}_i=w_{j_i^*}=\hat{S}^{i,j_i^*}_{k,t}$, which serves as a high-quality approximation of $S^i_{k,t}$ defined in~\eqref{def:RDAobject1} with high probability. Consequently, we select
\begin{align}
    i^*=\argmin_{i\in\{1,2,...,m_1\}}\hat{w}_i.
    \label{def:i*RDA}
\end{align}
We will show that the event $\{S^{i^*}_{k,t}\leq\epsilon/4\}$, corresponding to the event $A$ defined in~\eqref{def:eventA}, occurs with high probability.

Next, we derive a high probability bound for the event $B$ defined in~\eqref{def:eventB}. Considering the $i^*$-th call of Algorithm~\ref{algo:SMD}, it outputs the scalars $\zeta^j_k,j=1,2,...,m_3$ using $m_3$ independent sample trajectories $\{\xi'^{i^*,j}_{k,t}\}_{t=1}^{T_k},j=1,2,...,m_3$, i.e.,
\begin{align}
    \zeta^j_k=\left(\tsum_{t=0}^{T_k-1}\gamma_{k,t}\right)^{-1}\tsum_{t=0}^{T_k-1}\gamma_{k,t}\cL'_{z_k}\left(x^{i^*}_{k,t},y^{i^*}_{k,t},z_k,\lambda^{i^*}_{k,t},\xi'^{i^*,j}_{k,t}\right).
    \label{def:zetakRDA}
\end{align}
Taking $N=m_3$ and $w_j=\zeta^j_k$ as the inputs to Algorithm~\ref{algo:RDA}, it then outputs $\hat{w}=w_{j^*}=\zeta^{j^*}_k$. We will show that the event $B$ (see~\eqref{def:eventB}) taking $\zeta_T=w_{j^*}$ also occurs with high probability.

Finally, we apply Algorithm~\ref{algo:RDA} to compute $y_0$, one of the inputs in Algorithm~\ref{algo:PB}, using $m_0$ independent sample trajectories, i.e., $\{\xi^j_i\}_{i=1}^m,j=1,2,...,m_0$. The detailed derivations of the high probability bounds for the events $A$ and $B$ are deferred to Appendix B. Using these results, we establish the high probability convergence of Algorithm~\ref{algo:PB} in Theorem~\ref{thm:highprobRDA}, whose proof is also provided in Appendix B. Similar to Theorem~\ref{thm:converge1highprob}, Theorem~\ref{thm:highprobRDA} strengthens the result in Theorem~\ref{thm:converge1}. To facilitate our analysis, we assume that there exist constants $c_{2,*}$ and $c_{*,2}$ such that $\|\cdot\|_*\leq c_{2,*}\|\cdot\|_2$ and $\|\cdot\|_2\leq c_{*,2}\|\cdot\|_*$.

\begin{theorem}
    Under Assumptions~\ref{assumption1} and~\ref{assumption2}, for any $\alpha_0\in (0,1)$ and $\alpha\in(0,1-\alpha_0)$, let
    \begin{align*}
        K=\left\lceil\log_2\left\{\tfrac{8cb_0}{\epsilon}\left[\tfrac{\beta^2+4(L_fD_X+1)^2}{\max\{z^*/2,\epsilon/2c\}^2}+1\right]\right\}\right\rceil
    \end{align*}
    denote the maximum number of outer iterations in Algorithm~\ref{algo:PB}. Also assume that the stopping criterion~\eqref{simplestopping} is applied in Algorithm~\ref{algo:PB}, the algorithmic parameters in Algorithm~\ref{algo:PB} are set to
    \begin{align*}
        m=\left\lceil27\beta^2\right\rceil, \quad \theta=\tfrac{\epsilon}{c}, \quad T_k=\left\lceil\max\left\{\left[216(c_{2,*}c_{*,2}\!+\!1)D_{V,x_0}M_{z_k}\right]^2,\left(36\Bar{M}_{z_k}\right)^2\right\}\epsilon^{-2}\right\rceil \quad \text{and} \quad \gamma_{k,t}=\tfrac{D_{V,x_0}}{M_{z_k}\sqrt{T_k}},
    \end{align*}
    where $D_{V,x_0},M_{z_k}$ and $\Bar{M}_{z_k}$ are defined in~\eqref{def:D},~\eqref{def:M^2} and~\eqref{def:barMz}, respectively, and the parameters $m_1,m_2$ and $m_3$ described in Section~\ref{sec:highprobRDA} are set to
    \begin{align*}
        m_0=\left\lceil 18\ln\left(\tfrac{1}{\alpha_0}\right)\right\rceil, \quad m_1=\left\lceil\log_2\left(\tfrac{4K}{\alpha}\right)\right\rceil, \quad m_2=\left\lceil 18\ln\left(\tfrac{4m_1K}{\alpha}\right)\right\rceil, \quad \text{and} \quad m_3= \left\lceil18\ln\left(\tfrac{2K}{\alpha}\right)\right\rceil,
    \end{align*}
    Then an $\epsilon$-optimal solution for problem~\eqref{l2problem} can be attained within at most $mm_0+[m_1(m_2+1)+m_3]K\Tilde{T}$ number of oracle calls with probability at least $1-\alpha_0-\alpha$, where
    \begin{align*}
        \Tilde{T}=\left\lceil\max\left\{\left[216(c_{2,*}c_{*,2}\!+\!1)D_{V,x_0}M_{z_{\min}}\right]^2,\left(36\Bar{M}_{z_{\min}}\right)^2\right\}\epsilon^{-2}\right\rceil \quad \text{and} \quad z_{\min}=\max\left\{\tfrac{z^*}{2},\tfrac{\epsilon}{2c}\right\}.
    \end{align*}
    \label{thm:highprobRDA}
\end{theorem}

\section{Lower Complexity Bound}
\label{section:lower}

In this section, we provide a lower complexity bound for the mean-upper-semideviation problem~\eqref{l2problem}, which almost matches the convergence rate~\eqref{simplifycomplexityresult} of Algorithm~\ref{algo:PB} in the previous section. Let us first illustrate why the lower complexity bound for problem~\eqref{l2problem} is instance-dependent and might be worse than $\Omega(\epsilon^{-2})$ (see~\cite{nemirovskij1983problem}). Suppose that in problem~\eqref{l2problem}, the mean $\E[F(x,\xi)]$ is identical for any $x\in X$ and $c=1$, then problem~\eqref{l2problem} is equivalent to minimize the semideviation $g^{1/2}(x)=\E^{1/2}[(F(x,\xi)-\E[F(x,\xi)])_+^2]$ over $X$. Since the square root function $f_0(x)=x^{1/2}$ is concave and monotonically increasing over $[0,\infty)$, we have 
\begin{align}
    g^{\tfrac{1}{2}}(x)-g^{\tfrac{1}{2}}(x^*)=f_0(g(x))-f_0(g(x^*))\geq\nabla f_0(g(x))(g(x)-g(x^*))
    \label{eq:lower_relation}
\end{align}
for any $x\in X$, where $x^*=\argmin_{x\in X}g(x)$. By the classical complexity theory (see~\cite{nemirovskij1983problem}), under proper assumptions, for any solution $\Tilde{x} \in X$ generated by any algorithm utilizing $O(1/\epsilon^2)$ number of samples, we must have $g(\Tilde{x})-g(x^*)>\epsilon$ for some worst case problem instances. Then, in view of~\eqref{eq:lower_relation}, we must have $g^{1/2}(x)-g^{1/2}(x^*)>\nabla f_0(g(x))\epsilon$. However, $f_0(z)$ is not uniformly Lipschitz continuous in $[0,\infty)$ and its gradient blows up as $z\to 0$. Suppose that for some problem instance, $g(x^*)$ is close to $0$, then when $g(x)$ is close to $g(x^*)$, $\nabla f_0(g(x))$ becomes large. This gives us an idea why 
obtaining an $\epsilon$-optimal solution for problem $\min_{x\in X}g^{1/2}(x)$ may require more samples than that for problem $\min_{x\in X}g(x)$.

In order to formally present the lower complexity result for solving problem~\eqref{l2problem}, we consider a specific class of instances, which is inspired by~\cite{nemirovskij1983problem}. For notational simplicity, we denote $D=D_X$ and pick any constant $L>0$. Then for any $s_0\geq 0$, we define a class of stochastic functions parameterized by $t$ given by
\begin{align*}
    f_{t}(x):=t\left(d_t-x\right)
\end{align*}
over the domain $X:=\left[-D/2,D/2\right]$, where 
\begin{align}
    d_t:=\begin{cases}
        -\tfrac{D}{2}, & \text{if} \ t<0, \\
        \tfrac{D}{2}, & \text{if} \ t>0
    \end{cases}
    \label{eq:d}
\end{align}
and the parameter $t$ satisfies
\begin{align}
   t \in  \left(-\tfrac{1}{2}L^{\tfrac{1}{2}}[2(D+s_0)]^{-\tfrac{1}{2}}, 0\right) \bigcup \left(0,\tfrac{1}{2}L^{\tfrac{1}{2}}[2(D+s_0)]^{-\tfrac{1}{2}}\right).
    \label{eq:t}
\end{align}
It is worth noting that the constant $s_0$ in~\eqref{eq:t} will play an important role
on the construction of the lower bound (see discussions after Lemma~\ref{lem:lower2}).
Now we discuss some properties of the function $f_t$. First, notice that $f_t(x)\geq 0$ holds for any $x\in X$. Furthermore, for any $t>0$ and $x\in X$, $f_{t}(x)=f_{-t}(-x)$ holds, thus the pair of functions $f_{t}$ and $f_{-t}$ are symmetric to each other with respect to $x=0$. 

Below we define a stochastic first-order oracle $\mathcal{O}_f$ that returns a pair of unbiased estimators $(F_{t}(x,\xi),F'_{t}(x,\xi))$ for $(f_{t}(x),f'_{t}(x))$. Specifically, $\mathcal{O}_f$ returns
\begin{align}
    \left(F_{t}(x,\xi),F'_{t}(x,\xi)\right):=\begin{cases}
        \left(-\tfrac{|t|s_0}{2(1-\alpha)},0\right) & \text{with prob.} \ 1-\alpha, \\
        \tfrac{1}{\alpha}\left(f_{t}(x)+\tfrac{|t|s_0}{2},f_{t}'(x)\right) & \text{with prob.} \ \alpha,
    \end{cases}
    \label{def:Of}
\end{align}
where
\begin{align}
    \alpha:=|t|^{\tfrac{4}{3}}L^{-\tfrac{2}{3}}[2(D+s_0)]^{\tfrac{2}{3}}\in\left(0,\tfrac{1}{2}\right).
    \label{def:alpha}
\end{align}
Here, the relation $\alpha\in(0,1/2)$ follows from~\eqref{eq:t}. Note that by~\eqref{def:Of}, with probability $1-\alpha$, $\cO_f$ returns constant information that is independent of the decision variable $x$.

Moreover, we define the upper semideviation $g_{t}(x):=\E[G_{t}(x,\xi)]$ with 
\begin{align*}
    G_{t}(x,\xi):=[F_{t}(x,\xi)-f_{t}(x)]_+^2.
\end{align*}
Notice that for any $t>0$ and $x\in X$, we have
\begin{align}
    g_{t}(x)=\alpha\left[\tfrac{1}{\alpha}\left(f_t(x)+\tfrac{|t|s_0}{2}\right)-f_t(x)\right]^2=\alpha\left[\tfrac{1}{\alpha}\left(f_{-t}(-x)+\tfrac{|t|s_0}{2}\right)-f_{-t}(-x)\right]^2=g_{-t}(-x),
    \label{eq:gsymmetry}
\end{align}
where the second inequality follows from $f_t(x)=f_{-t}(-x)$. Thus, the pair of functions $g_{t}$ and $g_{-t}$ are symmetric to each other with respect to $x=0$.

Finally, we are ready to construct a class of instances for problem~\eqref{l2problem} as
\begin{align}
    \min_{x\in X} \left\{h_{t}(x):=f_{t}(x)+g_{t}^{\tfrac{1}{2}}(x)\right\}.
    \label{problem:h_lower}
\end{align}
It follows that the pair of functions $h_{t}$ and $h_{-t}$ are also symmetric to each other with respect to $x=0$, since for any $t>0$ and $x\in X$, we have $h_t(x)=f_{t}(x)+g_{t}^{1/2}(x)=f_{-t}(-x)+g_{-t}^{1/2}(-x)=h_{-t}(-x)$. Note that problem~\eqref{problem:h_lower} is given in the form of the $L_2$ risk minimization problem, i.e., problem~\eqref{l2problem} with $c=1$. Below we show that~\eqref{problem:h_lower} satisfies the general assumptions we make for problem~\eqref{l2problem}.
\begin{lemma}
    The class of problems~\eqref{problem:h_lower} satisfies the Lipschitz continuity condition~\eqref{assum:flipschitz}, the bounded variances conditions~\eqref{assum:fgradbddvar} and~\eqref{assum:fbddvar}, and Assumptions~\ref{assumption1} and~\ref{assumption2} on the semideviations. Specifically, considering $\|\cdot\|$ as the $l_2$-norm, we have:
    \begin{itemize}
        \item [(a)] In~\eqref{assum:flipschitz}, $L_f=2^{-3/2}L^{1/2}(D+s_0)^{-1/2}$;
        \item [(b)] In~\eqref{assum:fgradbddvar}, $\sigma_f^2=(2^{-3}+2^{-5/3})L(D+s_0)^{-1}$;
        \item [(c)] In~\eqref{assum:fbddvar}, $\beta^2=(2^{-3}+2^{-5/3})L(D+s_0)$;
        \item [(d)] In Assumption~\ref{assumption1}, $M_f^4=2^{-2}L^2(D+s_0)^2$;
        \item [(e)] In Assumption~\ref{assumption2}, $L_G^2=L^2$.
    \end{itemize}
    \label{lem:lower1}
\end{lemma}
\begin{proof}
    For~\eqref{assum:flipschitz}, we have
    \begin{align*}
        \left\|f'_{t}(x)\right\|_2=|t|\leq\tfrac{1}{2}L^{\tfrac{1}{2}}[2(D+s_0)]^{-\tfrac{1}{2}}=L_f
    \end{align*}
    by~\eqref{eq:t}. For~\eqref{assum:fgradbddvar}, we have
    \begin{align*}
        &\E\left[\left\|F'_{t}(x,\xi)-f'_{t}(x)\right\|_2^2\right]=(1-\alpha)(0+t)^2+\alpha \left(-\tfrac{1}{\alpha}t+t\right)^2=(1-\alpha)t^2+\tfrac{(1-\alpha)^2}{\alpha}t^2\overset{(a)}{\leq}t^2+\tfrac{1}{\alpha}t^2 \\
        &\overset{(b)}{=}t^2+|t|^{\tfrac{2}{3}}L^{\tfrac{2}{3}}[2(D+s_0)]^{-\tfrac{2}{3}}\overset{(c)}{\leq}\tfrac{1}{4}L[2(D+s_0)]^{-1}+2^{-\tfrac{2}{3}}L[2(D+s_0)]^{-1}=\sigma_f^2,
    \end{align*}
    where (a) and (b) utilize~\eqref{def:alpha}, and (c) follows from~\eqref{eq:t}. Before proceeding to~\eqref{assum:fbddvar}, we first notice that
    \begin{align}
        &\left\{\tfrac{1}{\alpha}\left[f_{t}(x)+\tfrac{|t|s_0}{2}\right]-f_{t}(x)\right\}^2\overset{(a)}{<}\left\{\tfrac{1}{\alpha}\left[f_{t}(x)+|t|s_0\right]-f_{t}(x)-|t|s_0\right\}^2=\left(\tfrac{1}{\alpha}-1\right)^2\left[f_{t}(x)+|t|s_0\right]^2 \nn\\
        &\overset{(b)}{\leq}\tfrac{(1-\alpha)^2}{\alpha^2}\left[|t|D+|t|s_0\right]^2\overset{(c)}{\leq}\tfrac{1}{\alpha^2}t^2(D+s_0)^2,
        \label{eq:subresult1}
    \end{align}
    where (a) and (c) follow from~\eqref{def:alpha}, and (b) is due to $0\leq f_{t}(x)\leq |t|D$ over $X$. Then, for~\eqref{assum:fbddvar}, we have
    \begin{align*}
        &\E\left[(F_{t}(x,\xi)-f_{t}(x))^2\right]=(1-\alpha)\left[-\tfrac{|t|s_0}{2(1-\alpha)}-f_{t}(x)\right]^2+\alpha\left\{\tfrac{1}{\alpha}\left[f_{t}(x)+\tfrac{|t|s_0}{2}\right]-f_{t}(x)\right\}^2 \\
        &\overset{(a)}{<}\!(1-\alpha)\left[|t|s_0+f_{t}(x)\right]^2+\tfrac{1}{\alpha}t^2(D+s_0)^2\overset{(b)}{<}\left(1+\tfrac{1}{\alpha}\right)t^2(D+s_0)^2\overset{(c)}{=}\left\{1+|t|^{-\tfrac{4}{3}}L^{\tfrac{2}{3}}[2(D+s_0)]^{-\tfrac{2}{3}}\right\}t^2\left(D+s_0\right)^2 \\
        &=\left\{t^2+|t|^{\tfrac{2}{3}}L^{\tfrac{2}{3}}[2(D+s_0)]^{-\tfrac{2}{3}}\right\}\left(D+s_0\right)^2\overset{(d)}{\leq}\left(2^{-3}+2^{-\tfrac{5}{3}}\right)L(D+s_0)=\beta^2,
    \end{align*}
    where (a) follows from~\eqref{def:alpha} and~\eqref{eq:subresult1}, (b) utilizes~\eqref{def:alpha} and $0\leq f_{t}(x)\leq |t|D$ over $X$, (c) directly applies~\eqref{def:alpha}, and (d) follows from~\eqref{eq:t}. For Assumption~\ref{assumption1}, we have
    \begin{align*}
        &\E\left[(F_{t}(x,\xi)-f_{t}(x))_+^4\right]=\alpha\left\{\tfrac{1}{\alpha}\left[f_{t}(x)+\tfrac{|t|s_0}{2}\right]-f_{t}(x)\right\}^4\overset{(a)}{\leq}\alpha\left[\tfrac{1}{\alpha^2}t^2(D+s_0)^2\right]^2=\tfrac{1}{\alpha^3}t^4(D+s_0)^4 \\
        &\overset{(b)}{=}\left\{t^{-4}L^2[2(D+s_0)]^{-2}\right\}t^4(D+s_0)^4=\tfrac{1}{4}L^2(D+s_0)^2=M_f^4,
    \end{align*}
    where (a) follows from~\eqref{eq:subresult1}, and (b) directly applies~\eqref{def:alpha}. For Assumption~\ref{assumption2}, we have
    \begin{align}
        &\E\left[\left\|G_{t}'(x,\xi)\right\|_2^2\right]=\E\left[\left\{2\left[F_{t}(x,\xi)-f_{t}(x)\right]_+\left[F'_{t}(x,\xi)-f'_{t}(x)\right]\right\}^2\right] \nn\\
        &=\alpha\left\{2\left\{\tfrac{1}{\alpha}\left[f_{t}(x)+\tfrac{|t|s_0}{2}\right]-f_{t}(x)\right\}\left[\tfrac{1}{\alpha}f_{t}'(x)-f_{t}'(x)\right]\right\}^2=4\alpha\left\{\tfrac{1}{\alpha}\left[f_{t}(x)+\tfrac{|t|s_0}{2}\right]-f_{t}(x)\right\}^2\left[\tfrac{1}{\alpha}f_{t}'(x)-f_{t}'(x)\right]^2 \nn\\
        &\overset{(a)}{\leq} 4\alpha\left[\tfrac{1}{\alpha^2}t^2(D+s_0)^2\right]\left[\tfrac{1}{\alpha}f'_{t}(x)\right]^2=\tfrac{4}{\alpha^3}t^4(D+s_0)^2\overset{(b)}{=}\left\{4t^{-4}L^2[2(D+s_0)]^{-2}\right\}t^4(D+s_0)^2=L^2=L_G^2,
        \label{lower:verifyassump2}
    \end{align}
    where (a) follows from~\eqref{def:alpha} and~\eqref{eq:subresult1}, and (b) directly applies~\eqref{def:alpha}.
\end{proof}
\vgap

The result below illustrates the relation between the optimality gaps for $\min_{x\in X}h_{t}(x)$ in ~\eqref{problem:h_lower} and the corresponding problem of minimizing the upper semideviation part only, i.e., $\min_{x\in X}g_{t}(x)$.
\begin{lemma}
    For any problem instance from~\eqref{problem:h_lower} with parameter $t$, the following results hold:
    
    (a) $g_{t}(x)$ and $h_{t}(x)$ are both monotone and minimized at $x^*=d_t$ (see the definition of $d_t$ in~\eqref{eq:d}).
    
    (b) For any $x\in X$ such that $h_{t}(x)-h_{t}(x^*)\leq\epsilon$, we have $g_{t}(x)-g_{t}(x^*)\leq\max\{2^{3/2}g_{t}^{1/2}(x^*)\epsilon,8\epsilon^2\}$.
    \label{lem:lower2}
\end{lemma}
\begin{proof}
    It suffice to consider the case $t>0$ only, because the case $t<0$ can be shown similarly, due to the symmetry of the functions $f_{t},g_{t}$ and $h_{t}$ with respect to $x=0$. For any $t>0$, $f_{t}(x)$ monotonically decreases over $X$ and is uniquely minimized at $x=x^*=D/2$. Note that $d_t=D/2$ since $t>0$, and hence we have
    \begin{align}
        g_{t}(x)&=\E\left[\left[F_{t}(x,\xi)-f_{t}(x)\right]_+^2\right]=\alpha\left\{\tfrac{1}{\alpha}\left[f_{t}(x)+\tfrac{|t|s_0}{2}\right]-f_{t}(x)\right\}^2=\tfrac{(1-\alpha)^2}{\alpha}\left[f_{t}(x)+\tfrac{|t|}{2(1-\alpha)} s_0\right]^2 \nn\\
        &=\tfrac{(1-\alpha)^2}{\alpha}\left[|t|\left(\tfrac{D}{2}-x\right)+|t|\tfrac{s_0}{2(1-\alpha)}\right]^2=\tfrac{(1-\alpha)^2}{\alpha}t^2\left[\tfrac{D}{2}+\tfrac{s_0}{2(1-\alpha)}-x\right]^2.
        \label{eq:gvalue_lower}
    \end{align}
    It is easy to see from above relation that $g_{t}(x)$ also monotonically decreases over $X$ and is uniquely minimized at $x=x^*=D/2$. Therefore, by~\eqref{problem:h_lower}, $h_{t}$ monotonically decreases over $X$ and is uniquely minimized at $x=x^*=D/2$. Consider any $\Bar{x}\in X$  ($\Bar{x}\neq x^*$)  such that $0<h_{t}(\Bar{x})-h_{t}(x^*)\leq\epsilon$ for some $\epsilon>0$ and denote $\nu_{\Bar{x}}=g_{t}(\Bar{x})-g_{t}(x^*)>0$.
     Let $f_0(x):=x^{1/2}$ for $x \in [0,\infty)$.  We have
    \begin{align*}
        \epsilon&\geq h_{t}(\Bar{x})-h_{t}(x^*)=f_0(g_{t}(\Bar{x}))+f_{t}(\Bar{x})-f_0(g_{t}(x^*))-f_{t}(x^*)\geq f_0(g_{t}(\Bar{x}))-f_0(g_{t}(x^*)) \nn\\
        &\geq\nabla f_0(g_{t}(\Bar{x}))(g_{t}(\Bar{x})-g_{t}(x^*))=\nabla f_0(g_{t}(x^*)+\nu_{\Bar{x}})\nu_{\Bar{x}}=\tfrac{\nu_{\Bar{x}}}{2\left[g_{t}(x^*)+\nu_{\Bar{x}}\right]^{1/2}}\geq\tfrac{\nu_{\Bar{x}}}{2\left[2\max\{g_{t}(x^*),\nu_{\Bar{x}}\}\right]^{1/2}},
    \end{align*}
    where the first equality follows from~\eqref{problem:h_lower}, the second inequality follows from the monotonicity of $f_{t}$, i.e., $f(\Bar{x})\geq f(x^*)$, and the third inequality follows from the concavity of $f_0$. It then follows from the above equation that $\nu_{\Bar{x}}\leq 2[2g_{t}(x^*)]^{1/2}\epsilon$ holds if $g_{t}(x^*)\geq\nu_{\Bar{x}}$ is satisfied, and $\nu_{\Bar{x}}\leq 8\epsilon^2$ holds if $g_{t}(x^*)<\nu_{\Bar{x}}$, thus we have
    \begin{align}
        \nu_{\Bar{x}}\leq\max\left\{2^{\tfrac{3}{2}}g_{t}^{\tfrac{1}{2}}(x^*)\epsilon,8\epsilon^2\right\}.
        \label{eq:nuepsilon_lower}
    \end{align}
\end{proof}
\vgap

In view of Lemma~\ref{lem:lower2}, to have $g_{t}(\Bar{x})-g_{t}(x^*)\leq\max\{2^{3/2}g_{t}^{1/2}(x^*)\epsilon,8\epsilon^2\}=:\nu$ is necessary for $h_{t}(\Bar{x})-h_{t}(x^*)\leq\epsilon$. Thus, in order to attain an $\epsilon$-optimal solution for problem~\eqref{problem:h_lower}, we need to at least find a $\nu$-optimal solution for problem $\min_{x\in X}g_{t}(x)$. Note also that by~\eqref{eq:gvalue_lower}, we have $g_{t}(x^*)=0$ when $s_0=0$ holds, and $g_{t}(x^*)\to\infty$ when $s_0\to\infty$. Therefore, different choices of $s_0\in[0,\infty)$ for the class of problems~\eqref{problem:h_lower} result in different instance values $g_{t}(x^*)\in[0,\infty)$.

\vgap

The result below provides a lower complexity bound for solving the class of problems $\min_{x\in X}g_{t}(x)$ based on the argument in~\cite[Section 5.3]{nemirovskij1983problem}.
\begin{lemma}
    Any method with a deterministic rule for the sample trajectories requires at least $N\geq\Omega\left(L^2D^2\nu^{-2}\right)$ calls to the stochastic first-order oracle $\cO_f$ to find a $\nu$-optimal solution for some problem instance from the class of problem instances $\min_{x\in X}g_{t}(x)$.
    \label{lem:lower3}
\end{lemma}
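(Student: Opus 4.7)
My plan is to adapt the classical two-point lower bound argument (in the spirit of Nemirovski and Yudin \cite{nemirovskij1983problem}) by constructing two ``symmetric'' instances from the family $\min_{x \in X} g_{t,d,s}(x)$ whose optimal solutions sit at opposite endpoints of $X$, while their stochastic oracle outputs coincide with probability $1-\alpha$ per query. Concretely, with $t_0>0$ to be chosen, I would take the pair $(t_+,d_+,s_0) = (+t_0,D/2,s_0)$ and $(t_-,d_-,s_0) = (-t_0,-D/2,s_0)$. From \eqref{eq:gvalue_lower} these two problems are minimized at $x_+^\star = D/2$ and $x_-^\star = -D/2$, respectively; by direct substitution the ``cross'' gap $g_{t_+,d_+,s_0}(x_-^\star) - g_{t_+,d_+,s_0}(x_+^\star)$ (and symmetrically) is of order $t_0^{2/3} L^{2/3} D^2 (D+s_0)^{-2/3}$, because the dominant term in \eqref{eq:gvalue_lower} is $(1-\beta)^2 t_0^2 D^2/\beta$ with $\beta := t_0^{4/3} L^{-2/3}[2(D+s_0)]^{2/3} < 1/2$.

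Next I would translate this gap separation into a no-$\epsilon$-optimal-solution statement: for any $\bar x \in X$, at least one of the two instances satisfies $g_{t_\pm,d_\pm,s_0}(\bar x) - g_{t_\pm,d_\pm,s_0}(x_\pm^\star) \geq c_0\, t_0^{2/3} L^{2/3} D^2 (D+s_0)^{-2/3}$ for an absolute constant $c_0>0$; this follows because $g_{t,d,s_0}$ is a monotone quadratic on $X$ whose slope at the minimizer has the right magnitude, so the points in $\{x \leq 0\}$ are bad for $t_+$ while the points in $\{x \geq 0\}$ are bad for $t_-$. Calibrating $t_0$ so that the above lower bound equals $4\epsilon$ gives $t_0^{2/3} = \Theta(\epsilon L^{-2/3} D^{-2}(D+s_0)^{2/3})$, whence $\beta = t_0^{4/3} L^{-2/3}[2(D+s_0)]^{2/3} = \Theta(\epsilon^2 L^{-2} D^{-4}(D+s_0)^2)$, which for $s_0$ of order $D$ simplifies to $\beta = \Theta(\epsilon^2/(L^2 D^2))$.

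The third step is the indistinguishability argument. Observe that the oracle's ``null'' return $(-|t|s/(2(1-\alpha)),0)$ depends only on $|t|$, $s$, and $\alpha$, hence is identical for both instances. Consequently, for any deterministic algorithm issuing $N$ oracle queries with a fixed rule, the event $E$ that every one of the $N$ samples is the null return has the same probability $(1-\beta)^N$ under both instances, and on $E$ the entire sample trajectory -- including the adaptive query points -- coincides under the two instances, so the algorithm produces the same output $\bar x_E$. If the algorithm found an $\epsilon$-optimal solution for both instances on event $E$, then $\bar x_E$ would be $\epsilon$-optimal for both, contradicting the gap separation from the preceding paragraph. Therefore, for the algorithm to be $\epsilon$-optimal on both instances with probability at least, say, $3/4$, we must have $(1-\beta)^N \leq 1/2$, i.e., $N \geq (\log 2)/\beta = \Omega(L^2 D^2 / \epsilon^2)$.

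The main obstacle is the gap-separation estimate in step two: keeping track of the $\beta$-dependent constants in \eqref{eq:gvalue_lower} and verifying that the cross-endpoint gap dominates any improvement an interior $\bar x$ could simultaneously offer to both instances (which requires exploiting that $g_{t_+,\cdot}$ is decreasing while $g_{t_-,\cdot}$ is increasing on $X$). A secondary subtlety is confirming that $\beta<1/2$ under the calibration of $t_0$ so that the parameter choice remains in the admissible regime of \eqref{eq:tds}; this is automatic once $\epsilon$ is small enough relative to $LD$.
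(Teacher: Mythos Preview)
Your proposal is correct and mirrors the paper's proof: the same symmetric pair $(t_0,D/2,s_0)$ and $(-t_0,-D/2,s_0)$, the same coincidence of the null oracle return on the all-null event, and the same monotonicity argument showing that any common output $\bar x$ has gap at least $g(0)-\min g$ for one of the two instances. The only cosmetic differences are that the paper argues directly through the expected optimality gap (writing $p_0\cdot 2\nu\le \E[g(x)]-\min g\le\nu$ to force $p_0<1/2$, in line with the paper's definition of $\epsilon$-optimality) rather than a high-probability guarantee, and that it retains the cross term $D\alpha s_0$ in $(D/2+\alpha s_0)^2-(\alpha s_0)^2$, which is precisely what makes the final bound $\Omega(L^2D^2/\nu^2)$ independent of $s_0$ and renders your ``$s_0$ of order $D$'' restriction unnecessary.
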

\begin{proof}
    First, we let $\mathcal{A}_g$ be any method with a deterministic rule for the sample trajectories, such that for any problem instance $\min_{x\in X}g_{t}(x)$ with $t$ satisfying~\eqref{eq:t}, given any initial point $x_0\in X$, $\mathcal{A}_g$ outputs a $\nu$-optimal solution $x \in X$, i.e., $\E[g_{t}(x)]-g_{t}(x^*)\leq\nu$, after $N$ queries of the stochastic first-order oracle $\mathcal{O}_f$.
    
    Now, for any $t>0$ that satisfies~\eqref{eq:t}, we consider a pair of problem instances $\min_{x\in X}g_{t}(x)$ and $\min_{x\in X}g_{-t}(x)$. For each of the two instances, in view of the definition of $\cO_f$ in~\eqref{def:Of}, the probability that the method $\mathcal{A}_g$ receives the constant information from all of the $N$ queries of $\mathcal{O}_f$ is
    \begin{align}
        (1-\alpha)^N=\left\{1-|t|^{\tfrac{4}{3}}L^{-\tfrac{2}{3}}[2(D+s_0)]^{\tfrac{2}{3}}\right\}^N,
        \label{def:pxic_lower}
    \end{align}
    and the information $(F_{t}(x,\xi),F'_{t}(x,\xi))=(-ts_0/[2(1-\alpha)],0)$ in each query of $\cO_f$ is the same for the two instances. Therefore, for each of the two instances, given the initial point $x_0$, $\mathcal{A}_g$ outputs the same solution $\Bar{x}\in X$ since it receives the same constant information from $N$ queries of $\cO_f$.

    Below we first show that, either the problem instance $g=g_{t}$ or $g=g_{-t}$ satisfies
    \begin{align}
        g(\Bar{x})-\min_{x\in X}g(x)\geq g_{t}(0)-g_{t}(d_t).
        \label{eq:glowerbound}
    \end{align}
    We will show the above result by considering two cases. Let us first suppose that $\Bar{x}\leq 0$. In this case, we have $g_{t}(\Bar{x})-\min_{x\in X}g_{t}(x)\geq g_{t}(0)-g_{t}(d_t)$, which follows from Lemma~\ref{lem:lower2}(a). Now we suppose that $\Bar{x}>0$, and we have
    \begin{align*}
        g_{-t}(\Bar{x})-\min_{x\in X}g_{-t}(x)=g_{-t}(\Bar{x})-g_{-t}(d_{-t})=g_{t}(-\Bar{x})-g_{t}(d_t)\geq g_{t}(0)-g_{t}(d_t),
    \end{align*}
    where the first equality and the inequality follow from Lemma~\ref{lem:lower2}(a), and the second equality follows from the facts that $d_t=-d_{-t}$ (see~\eqref{eq:d}), and that $g_{t}$ is symmetric to $g_{-t}$ with respect to $x=0$ (see~\eqref{eq:gsymmetry}).
    
    Now we focus on the problem instance $\min_{x\in X}g(x)$. It follows from~\eqref{eq:glowerbound} that
    \begin{align}
        &g(\Bar{x})-\min_{x\in X}g(x)\geq g_{t}(0)-g_{t}(d_t)\overset{(a)}{=}\tfrac{(1-\alpha)^2}{\alpha}t^2\left\{\left[\tfrac{D}{2}+\tfrac{s_0}{2(1-\alpha)}\right]^2-\left[\tfrac{s_0}{2(1-\alpha)}\right]^2\right\} \nn\\
        &\overset{(b)}{>}\tfrac{1}{16\alpha}t^2\left[\left(D+\tfrac{s_0}{1-\alpha}\right)^2-\left(\tfrac{s_0}{1-\alpha}\right)^2\right]\overset{(c)}{=}2^{-\tfrac{14}{3}}|t|^{\tfrac{2}{3}}L^{\tfrac{2}{3}}(D+s_0)^{-\tfrac{2}{3}}\left[\left(D+\tfrac{s_0}{1-\alpha}\right)^2-\left(\tfrac{s_0}{1-\alpha}\right)^2\right] \nn\\
        &=2^{-\tfrac{14}{3}}|t|^{\tfrac{2}{3}}L^{\tfrac{2}{3}}(D+s_0)^{-\tfrac{2}{3}}D\left(D+\tfrac{2}{1-\alpha}s_0\right)\overset{(d)}{\geq}2^{-\tfrac{14}{3}}|t|^{\tfrac{2}{3}}L^{\tfrac{2}{3}}D\left(D+\tfrac{2}{1-\alpha} s_0\right)^{\tfrac{1}{3}},
        \label{eq1lower}
    \end{align}
    where (a) applies~\eqref{eq:gvalue_lower}, (b) and (c) follow from~\eqref{def:alpha}, and (d) holds due to $2/(1-\alpha)>1$. For any $\nu$ such that $0<\nu\leq 2^{-20/3}LD$, we take
    \begin{align}
        t=2^{\tfrac{17}{2}}\nu^{\tfrac{3}{2}}L^{-1}D^{-\tfrac{3}{2}}\left(D+\tfrac{2}{1-\alpha}s_0\right)^{-\tfrac{1}{2}}\leq\tfrac{1}{2}L^{\tfrac{1}{2}}[2(D+s_0)]^{-\tfrac{1}{2}},
        \label{eq:tnu_lower}
    \end{align}
    which satisfies~\eqref{eq:t}, then it follows from~\eqref{eq1lower} that
    \begin{align}
        g(\Bar{x})-\min_{x\in X}g(x)>2\nu.
        \label{eq2lower}
    \end{align}
    
    For the problem instance $\min_{x\in X}g(x)$, suppose that given the initial point $x_0\in X$, the method $\mathcal{A}_g$ outputs a $\nu$-optimal solution $\Tilde{x}$, i.e., $\E[g(\Tilde{x})]-g(x^*)\leq\nu$, after $N$ queries of $\cO_f$. Let $\Xi^N$ denote the set of all possible sample trajectories by the $N$ queries of $\cO_f$, where $\tsum_{\xi^N\in\Xi^N}\P(\xi^N)=1$. For any $\xi^N\in\Xi^N$, we suppose $\mathcal{A}_g$ outputs the solution $x_{\xi^N}$. Specifically, let $\xi_c^N\in\Xi^N$ denote the sample trajectory where $\mathcal{A}_g$ receives the constant information from all of the $N$ queries of $\cO_f$. Note that we have $\P(\xi_c^N)=(1-\alpha)^N$ and $x_{\xi_c^N}=\Bar{x}$. It then follows from~\eqref{def:pxic_lower} and~\eqref{eq2lower} that
    \begin{align*}
        \nu&\geq\E[g(\Tilde{x})]-\min_{x\in X}g(x)=\tsum_{\xi^N\in\Xi^N}\P(\xi^N)\left[g(x_{\xi^N})-\min_{x\in X}g(x)\right]\geq\P(\xi_c^N)\left[g(x_{\xi_c^N})-\min_{x\in X}g(x)\right] \\
        &=(1-\alpha)^N\left[g(\Bar{x})-\min_{x\in X}g(x)\right]>(1-\alpha)^N\times 2\nu\geq 2\nu(1-N\alpha),
    \end{align*}
    which indicates $1-N\alpha<1/2$. Thus, utilizing~\eqref{def:alpha} and~\eqref{eq:tnu_lower}, we have
    \begin{align}
        N>\tfrac{1}{2\alpha}=\tfrac{1}{2}|t|^{-\tfrac{4}{3}}L^{\tfrac{2}{3}}[2(D+s_0)]^{-\tfrac{2}{3}}>2^{-13}L^2D^2\nu^{-2}.
        \label{eq:Nnu_lower}
    \end{align}
\end{proof}
\vgap

Based on the previous results, we provide an instance-dependent lower complexity bound for problem~\eqref{l2problem}.
\begin{theorem}
    Any method with a deterministic rule for the sample trajectories requires at least
    \begin{align*}
        N\geq\Omega\left(L_G^2D_X^2\left(\max\left\{g^{\tfrac{1}{2}}(x^*),\epsilon\right\}\right)^{-2}\epsilon^{-2}\right)
    \end{align*}
    calls to the stochastic first-order oracle to find an $\epsilon$-optimal solution for some instance of the mean-upper-semideviation problem~\eqref{l2problem} that satisfies~\eqref{assum:flipschitz},~\eqref{assum:fgradbddvar},~\eqref{assum:fbddvar} and Assumptions~\ref{assumption1} and~\ref{assumption2}, where $L_G$ is defined in Assumption~\ref{assumption2} and $D_X := \max_{x_1, x_2\in X} \|x_1-x_2\|$.
    \label{thm:lower}
\end{theorem}
\begin{proof}
    The proof is based on Lemma~\ref{lem:lower1}, Lemma~\ref{lem:lower2} and Lemma~\ref{lem:lower3}. We have shown in Lemma~\ref{lem:lower1} that the class of problem instances~\eqref{problem:h_lower} satisfies~\eqref{assum:flipschitz},~\eqref{assum:fgradbddvar},~\eqref{assum:fbddvar} and Assumptions~\ref{assumption1} and~\ref{assumption2}. Let $\mathcal{A}_h$ be any method with a deterministic rule for the sample trajectories, such that given any initial point $x_0\in X$, for any instance $\min_{x\in X}h_{t}(x)$ from the class of problem instances~\eqref{problem:h_lower}, $\mathcal{A}_h$ attains an $\epsilon$-optimal solution $\Tilde{x}$ after $N$ queries of the stochastic first-order oracle $\mathcal{O}_f$. According to Lemma~\ref{lem:lower2} and the discussion that follows, $\Tilde{x}$ is at least a $\max\{2^{3/2}g_{t}^{1/2}(x^*)\epsilon,8\epsilon^2\}$-optimal solution for the problem instance $\min_{x\in X}g_{t}(x)$. It then directly follows from Lemma~\ref{lem:lower3} that, for some instance from the class of problem instances $\min_{x\in X}g_{t}(x)$, we have
    \begin{align*}
        N>2^{-13}L^2D^2\left\{\max\left\{2^{\tfrac{3}{2}}g_{t}^{\tfrac{1}{2}}(x^*),8\epsilon\right\}\right\}^{-2}\epsilon^{-2},
    \end{align*}
    where $L^2=L_G^2$ holds by~\eqref{lower:verifyassump2} in Lemma~\ref{lem:lower1}, and $D=D_X$.
\end{proof}
\vgap

Notice that the instance-dependent value $z^*$ (see~\eqref{def:yzstar}) in the oracle complexity result~\eqref{simplifycomplexityresult} satisfies $z^*=2g^{1/2}(x^*)$, and that $g^{1/2}(x^*)$ also appears in the lower complexity bound in Theorem~\ref{thm:lower}. Therefore, we conclude that the oracle complexity result in~\eqref{simplifycomplexityresult} is nearly optimal up to the logarithmic factor and some other constant factors. Nevertheless, we should acknowledge that the lower complexity bound constructed in Theorem~\ref{thm:lower} does not explicitly include the problem parameters $\beta,L_f,\sigma_f$ and $M_f$ that appear in the oracle complexity result~\eqref{complexityresult}  (see~\eqref{def:C1C2}), though their relationships with $L_G$ and $D_X$ have been illustrated in Lemma~\ref{lem:lower1} for the specific problem instance. It is worth noting that although the sub-Gaussian assumptions (see Assumption~\ref{assumption3}) used to derive the high probability result in Section~\ref{sec:highprob} are not satisfied for the class of problem instances~\eqref{problem:h_lower}, the lower complexity bound still nearly matches the high probability complexity results established in Section~\ref{sec:highprobRDA}, which no longer rely on these assumptions.
\section{General $L_p$ Risk Minimization Problem}

In this section, we extend our prior results to the general $L_p$ risk minimization problem~\eqref{lpproblem} with any $p>1$. Specifically, we show our lifting formulation, convergence results and a lower complexity bound for problem~\eqref{lpproblem}.

\subsection{Lifting Formulation for $L_p$ Risk Measure}

Suppose that the conditions~\eqref{assum:flipschitz},~\eqref{assum:fgradbddvar} and~\eqref{assum:fbddvar} still hold, and we condition on~\eqref{eq:Ycontainy}. Similar to the reformulation~\eqref{eq:l2equiform} for the $L_2$ problem~\eqref{l2problem}, we provide the following convex-concave stochastic saddle point reformulation that is equivalent to the $L_p$ problem~\eqref{lpproblem} by using the lifting strategy:
\begin{align}
    \inf_{x\in X,y\in Y,z>0}\max_{\lambda\in\Lambda}L(x,y,z,\lambda)=\tfrac{c}{z^{p-1}}\E\left[(F(x,\xi)-y)_+^p\right]+y+c(p-1)p^{-\tfrac{p}{p-1}}z+\lambda(\E[F(x,\xi)]-y),
    \label{lp:minmax}
\end{align}
where $Y$ is defined in~\eqref{def:ydelta}, and $\Lambda=[0,1]$. Moreover, the saddle point $(x^*,y^*,z^*,\lambda^*)$ of problem~\eqref{lp:minmax} satisfies:
\begin{align}
    x^*\in X, \ y^*=\E[F(x^*,\xi)]\in Y, \ z^*=p^{\tfrac{1}{p-1}}\E^{\tfrac{1}{p}}\left[(F(x^*,\xi)-\E[F(x^*,\xi)])_+^p\right] \quad \text{and} \quad \lambda^*\in\Lambda,
    \label{lp:def:yzstar}
\end{align}
where $x^*$ is the optimal solution for the $L_p$ problem~\eqref{lpproblem}.

Next, similar to Assumptions~\ref{assumption1} and~\ref{assumption2} for the $L_2$ problem~\eqref{l2problem}, we make the following two assumptions for the $L_p$ problem~\eqref{lpproblem}.

\begin{assumption}
    $\E[(F(x,\xi)-\E[F(x,\xi)])_+^{2p}]\leq M_f^{2p}$ for any $x\in X$.
    \label{assumption4}
\end{assumption}
\begin{assumption}
    Define $G(x,\xi):=(F(x,\xi)-\E[F(x,\xi)])_+^p$, then $\E[\|G'(x,\xi)\|_{\star,X}^2]\leq L_G^2$ for any $x\in X$.
    \label{assumption5}
\end{assumption}

\subsection{Convergence Results}

To solve problem~\eqref{lp:minmax}, we still consider the min-min-max formulation in~\eqref{l2minminmaxproblem} and apply Algorithm~\ref{algo:PB}. First, we redefine the vector in~\eqref{def:lzsubgrad} as
\begin{align}
    l'_z(x,y,\lambda,\xi):=\left[
    \begin{array}{c}
    \cL'_x(x,y,z,\lambda,\xi) \\
    \cL'_y(x,y,z,\lambda,\xi) \\
    -\cL'_\lambda(x,y,z,\lambda,\xi)
    \end{array}
    \right]=\left[
    \begin{array}{c}
    \tfrac{cp}{z^{p-1}}(F(x,\xi)-y)_+^{p-1}F'(x,\xi)+\lambda F'(x,\xi) \\
    -\tfrac{cp}{z^{p-1}}(F(x,\xi)-y)_+^{p-1}+1-\lambda \\
    -F(x,\xi)+y
    \end{array}
    \right].
    \label{lp:def:lzsubgrad}
\end{align}
Recall that the set $U=X\times Y\times\Lambda$. Moreover, compared to the definition of $M_z^2$ in~\eqref{def:M^2}, we redefine $M_z^2$ as
\begin{align}
    M_z^2:=&\tfrac{3c^2}{z^{2p-2}}\max\{2^{p-2},1\}^2\left\{L_G^2+p^2(L_f^2+1)M_f^{2p-2}+p^2\left[2^{2p}(L_f^2+\sigma_f^2)+2^{2p-2}\right](L_fD_X+1)^{2p-2}\right\} \nn\\
    &+12(L_f^2+\sigma_f^2)+8(L_fD_X+1)^2+2\beta^2+3.
    \label{lp:def:Mz}
\end{align}
For any $u=(x,y,\lambda)\in U$, one can verify that $\E[\|l'_z(x,y,\lambda,\xi)\|_*^2]\leq M_z^2$ by following a similar argument as in the proof of Lemma~\ref{lem:Lipsboundnum} and applying the following elementary inequalities: (1) $(a+b)^r\leq 2^{r-1}(a^r+b^r)$ for any $a,b>0$ and $r\geq 1$; (2) $(a+b)^r\leq a^r+b^r$ for any $a,b>0$ and $r\in (0,1)$. Notice that $M_z^2$ in~\eqref{lp:def:Mz} is in the order of $\cO(\max\{z^{-2p+2},1\})$, which, by applying $p=2$, matches the order $\cO(\max\{z^{-2},1\})$ in~\eqref{def:M^2}. Below we establish the convergence for Algorithm~\ref{algo:PB} applied to the $L_p$ problem~\eqref{lpproblem}.

\begin{theorem}
    Under Assumptions~\ref{assumption4} and~\ref{assumption5}, for any $\alpha_0\in (0,1)$ and $\alpha\in(0,1-\alpha_0)$, let
    \begin{align}
        K:=\left\lceil\log_2\left\{\tfrac{cb_0}{\epsilon}\left\{\tfrac{(p-1)2^{p+1}\left[M_f^p+2^p\left(L_fD_X+1\right)^p\right]}{\max\{z^*/2,p^{1/(p-1)}\epsilon/(4c)\}^p}+16p^{-\tfrac{1}{p-1}}\right\}\right\}\right\rceil
        \label{lp:def:K}
    \end{align}
    denote the maximum number of the outer iterations in Algorithm~\ref{algo:PB}. Also assume that the stopping criterion~\eqref{simplestopping} is applied in Algorithm~\ref{algo:PB}, and the algorithmic parameters in Algorithm~\ref{algo:PB} are set or modified to
    \begin{align}
        m\!=\!\left\lceil\!\tfrac{\beta^2}{\alpha_0}\!\right\rceil, \theta\!=\!\tfrac{p^{1\!/\!(p\!-\!1)}\epsilon}{2c}, b_0\!\geq\!\max\left\{p^{\tfrac{1}{p\!-\!1}}\!M_f,\theta\right\}, T_k\!=\!\left\lceil\!\max\left\{\left(\tfrac{36K}{\alpha}\!D_{V,x_0}\!M_{z_k}\right)^2,\tfrac{32K}{\alpha}\!\Bar{M}_{z_k}^2\right\}\epsilon^{-2}\!\right\rceil \text{ and } \gamma_{k,t}\!=\!\tfrac{D_{V,x_0}}{M_{z_k}\!\sqrt{T_k}},
        \label{lp:parathm1}
    \end{align}
    where
    \begin{align}
        \Bar{M}_z^2:=\tfrac{c^2}{z^{2p-2}}(p-1)^22^{2p-1}\left[M_f^{2p}+2^{2p}(L_fD_X+1)^{2p}\right],
        \label{lp:def:BarMz}
    \end{align}
    and $D_{V,x_0}$ and $M_{z_k}$ are defined in~\eqref{def:D} and~\eqref{lp:def:Mz}, respectively. Then an $\epsilon$-optimal solution for problem~\eqref{lpproblem} can be attained within at most $m+2K\Tilde{T}$ number of oracle calls with probability at least $1-\alpha_0-\alpha$, where
    \begin{align*}
        \Tilde{T}=\left\lceil\max\left\{\left(\tfrac{36K}{\alpha}D_{V,x_0}M_{z_{\min}}\right)^2,\tfrac{32K}{\alpha}\Bar{M}_{z_{\min}}^2\right\}\epsilon^{-2}\right\rceil \quad \text{and} \quad z_{\min}=\max\left\{\tfrac{z^*}{2},\tfrac{p^{1/(p-1)}\epsilon}{4c}\right\}.
    \end{align*}
    \label{lp:thm:converge1}
\end{theorem}
\begin{proof}
    This proof is similar to that of Theorem~\ref{thm:converge1}, so we only highlight the key differences.
    
    First, notice that the parameter $\theta=p^{1/(p-1)}\epsilon/(2c)$ in~\eqref{lp:parathm1} generalizes the choice $\theta=\epsilon/c$ for the special case $p=2$. This is because, in the general $L_p$ setting, one can follow the same reasoning as in~\eqref{eq:opt2} of Lemma~\ref{lem:opt} to show that $\E[h(\Bar{x}_k)]-h(x^*)\leq\epsilon$ holds when Algorithm~\ref{algo:PB} terminates with $z^*\leq z_k=b_{k+1}<p^{1/(p-1)}\epsilon/(2c)$.
    
    Second, we set $b_0\geq\max\{p^{1/(p-1)}M_f,\theta\}$ as in~\eqref{lp:parathm1}, since $z^*\leq p^{1/(p-1)}M_f$ follows directly from the definition of $z^*$ in~\eqref{lp:def:yzstar} and Assumption~\ref{assumption4}.
    
    Next, observe that the definition of $\Bar{M}_z^2$ in~\eqref{lp:def:BarMz} differs from that in~\eqref{def:barMz} of Lemma~\ref{lem:induction1}. To understand this difference, recall from~\eqref{Mzbartilde_relation} the relationship between $\Bar{M}_{z_k}^2$ and $\tilde{M}_{z_k}^4$, where $\tilde{M}_z^4$, defined in~\eqref{def:tildeMz} of Lemma~\ref{lem:subdiff}, serves as an upper bound for $\mathrm{Var}(\cL'_z(x,y,z,\lambda,\xi))$ (see~\eqref{eq2.5lem2algo}). In the $L_p$ case, however, $\Bar{M}_{z_k}^2$ corresponds to the upper bound for the variance of the generalized $\cL'_z(x,y,z,\lambda,\xi)$.
    
    Finally, $K$ defined in~\eqref{lp:def:K} is a natural extension of~\eqref{Kthm1} in Theorem~\ref{thm:converge1}. The difference arises because we follow a similar way to~\eqref{eq1thm1} to bound the generalized $|L'_z(x,y,z,\lambda)|$ in the $L_p$ case, and adopt the generalized parameter choice $\theta=p^{1/(p-1)}\epsilon/(2c)$.
\end{proof}
\vgap

We also provide the convergence guarantees for Algorithm~\ref{algo:PB} when the enhanced stopping criterion~\eqref{originalstop}-\eqref{stopcriteria} is adopted.

\begin{theorem}
    Under Assumptions~\ref{assumption4} and~\ref{assumption5}, for any $\alpha_0\in (0,1)$ and $\alpha\in(0,1-\alpha_0)$, suppose that the algorithmic parameters in Algorithm~\ref{algo:PB} are set or modified to
    \begin{align*}
        m\!=\!\left\lceil\!\tfrac{\beta^2}{\alpha_0}\!\right\rceil, \theta\!=\!\tfrac{p^{1\!/\!(p\!-\!1)}\epsilon}{2c}, b_0\!\geq\!\max\left\{p^{\tfrac{1}{p\!-\!1}}\!M_f,\theta\right\}, T_k\!=\!\left\lceil\!\max\left\{\left(\tfrac{216K}{\alpha}\!D_{V,x_0}\!M_{z_k}\right)^2,\tfrac{48K}{\alpha}\!\Bar{M}_{z_k}^2\right\}\epsilon^{-2}\!\right\rceil \text{ and } \gamma_{k,t}\!=\!\tfrac{D_{V,x_0}}{M_{z_k}\!\sqrt{T_k}},
    \end{align*}
    where $D_{V,x_0},M_{z_k}$ and $\Bar{M}_{z_k}$ are defined in~\eqref{def:D},~\eqref{lp:def:Mz} and~\eqref{lp:def:BarMz}, respectively. Moreover, assume that the maximum number of outer iterations in Algorithm~\ref{algo:PB} is set to
    \begin{align*}
        K=\left\lceil\log_2\left\{\tfrac{cb_0}{\epsilon}\left\{\tfrac{(p-1)2^{p+1}\left[M_f^p+2^p\left(L_fD_X+1\right)^p\right]}{\max\{z^*/2,p^{1/(p-1)}\epsilon/(4c)\}^p}+16p^{-\tfrac{1}{p-1}}\right\}\right\}\right\rceil
    \end{align*}
    and the stopping criterion~\eqref{originalstop}-\eqref{stopcriteria} is applied, where $\hat{\psi}(\cdot)$ is computed using~\eqref{eq:hatpsi} with
    \begin{align}
        T'_k=\left\lceil\tfrac{1536K}{\alpha}\left\{\tfrac{c^2}{z_k^{2p-2}}2^{2p-1}\left[M_f^{2p}+2^{2p}(L_fD_X+1)^{2p}\right]+\beta^2\right\}\epsilon^{-2}\right\rceil.
        \label{lp:T'kthmstop}
    \end{align}
    Then Algorithm~\ref{algo:PB} must terminate in some outer iteration $\tilde{k}\leq K-1$, and with probability at least $1-\alpha_0-\alpha$, an $\epsilon$-optimal solution for problem~\eqref{lpproblem} is attained in some outer iteration $k_0\leq\tilde{k}$, and $z_k\geq\max\{z^*/4,p^{1/(p-1)}\epsilon/(4c)\}$ for $k\in\{0,...,\tilde{k}\}$.
    \label{lp:thm:stop}
\end{theorem}
\begin{proof}
    This proof is similar to that of Theorem~\ref{thm:stop}; we therefore highlight only the key differences. Note that the parameters $\theta,b_0$ and $K$, which differ from those in the case $p=2$, have already been discussed in the proof of Theorem~\ref{lp:thm:converge1}. Therefore, we focus here on the choice of $T'_k$ in~\eqref{lp:T'kthmstop}, which generalizes~\eqref{Tk'thm2} in Theorem~\ref{thm:stop}. Specifically, this extension follows from the derivations in~\eqref{eq:stopvar} of Lemma~\ref{lem:stop1}, utilizing a similar approach to obtain an upper bound for the variance of the generalized $\cL(x,y,z,\lambda,\xi)$.
\end{proof}
\vgap

Notice that by Theorem~\ref{lp:thm:converge1} and Theorem~\ref{lp:thm:stop}, the oracle complexity for Algorithm~\ref{algo:PB} applied to problem~\eqref{lpproblem} is bounded by
\begin{align}
    \tilde{\cO}\left(\max\{z^*,\epsilon\}^{-2p+2}\epsilon^{-2}\right),
    \label{lp:simplifycomplexity}
\end{align}
which reduces to the oracle complexity result~\eqref{simplifycomplexityresult} for solving the $L_2$ problem~\eqref{l2problem} by setting $p=2$. We will show that~\eqref{lp:simplifycomplexity} is nearly optimal in the next subsection. Moreover, for the $L_p$ problem~\eqref{lpproblem}, the high probability convergence can be established under certain sub-Gaussian assumptions, or by using multiple sample trajectories together with the robust distance approximation, which are similar to the results in Theorem~\ref{thm:converge1highprob} and Theorem~\ref{thm:highprobRDA} for the $L_2$ problem~\eqref{l2problem}, respectively.

\subsection{Lower Complexity Bound}
\label{lp:subseclowerbound}

In order to provide a lower complexity result for solving the $L_p$ problem~\eqref{lpproblem}, we construct a specific class of instances, which is similar to that in Section~\ref{section:lower}. For notational simplicity, we denote $D=D_X$ and pick any constant $L>0$. Then for any $p>1$ and $s_0\geq 0$, a class of stochastic functions parameterized by $t$ is defined as
\begin{align*}
    f_{t}(x):=t\left(d_t-x\right)
\end{align*}
over the domain $X:=[-D/2,D/2]$, where 
\begin{align*}
    d_t:=\begin{cases}
        -\tfrac{D}{2}, & \text{if} \ t<0, \\
        \tfrac{D}{2}, & \text{if} \ t>0
    \end{cases}
\end{align*}
and the parameter $t$ satisfies
\begin{align*}
    t \in  \left(-\tfrac{1}{2}L^{\tfrac{1}{p}}\left[p^{\tfrac{1}{p-1}}(D+s_0)\right]^{\tfrac{1-p}{p}}, 0\right) \bigcup \left(0,\tfrac{1}{2}L^{\tfrac{1}{p}}\left[p^{\tfrac{1}{p-1}}(D+s_0)\right]^{\tfrac{1-p}{p}}\right).
\end{align*}
Moreover, we define a stochastic first-order oracle $\mathcal{O}_f$ that returns a pair of unbiased estimators $(F_{t}(x,\xi),F'_{t}(x,\xi))$ for $(f_{t}(x),f'_{t}(x))$. Specifically, $\mathcal{O}_f$ returns
\begin{align*}
    \left(F_{t}(x,\xi),F'_{t}(x,\xi)\right):=\begin{cases}
        \left(-\tfrac{|t|s_0}{2(1-\alpha)},0\right) & \text{with prob.} \ 1-\alpha, \\
        \tfrac{1}{\alpha}\left(f_{t}(x)+\tfrac{|t|s_0}{2},f_{t}'(x)\right) & \text{with prob.} \ \alpha,
    \end{cases}
\end{align*}
where
\begin{align*}
    \alpha := |t|^{\tfrac{2p}{2p-1}}L^{-\tfrac{2}{2p-1}}\left[p^{\tfrac{1}{p-1}}(D+s_0)\right]^{\tfrac{2p-2}{2p-1}}\in\left(0,\tfrac{1}{2}\right).
\end{align*}
Moreover, we define the upper semideviation of order $p$ as $g_t(x):=\E[G_t(x,\xi)]$, with
\begin{align*}
    G_{t}(x,\xi):=[F_{t}(x,\xi)-f_{t}(x)]_+^p.
\end{align*}
Finally, we construct a class of instances for the $L_p$ problem~\eqref{lpproblem} as
\begin{align}
    \min_{x\in X}\left\{h_{t}(x):=f_{t}(x)+g_{t}^{\tfrac{1}{p}}(x)\right\}.
    \label{lp:problem:h_lower}
\end{align}
With the help of the class of problem instances~\eqref{lp:problem:h_lower}, we are able to provide an instance-dependent lower complexity bound for problem~\eqref{lpproblem}.
\begin{theorem}
    Any method with a deterministic rule for the sample trajectories requires at least
    \begin{align*}
        N\geq\Omega\left(L_G^2D_X^2\left(\max\left\{g^{\tfrac{1}{p}}(x^*),\epsilon\right\}\right)^{-2p+2}\epsilon^{-2}\right)
    \end{align*}
    calls to the stochastic first-order oracle to find an $\epsilon$-optimal solution for some instance of the mean-upper-semideviation problem~\eqref{lpproblem} that satisfies~\eqref{assum:flipschitz},~\eqref{assum:fgradbddvar},~\eqref{assum:fbddvar} and Assumptions~\ref{assumption4} and~\ref{assumption5}, where $L_G$ is defined in Assumption~\ref{assumption5} and $D_X := \max_{x_1, x_2\in X} \|x_1-x_2\|$.
    \label{lp:thm:lower}
\end{theorem}
\begin{proof}
    This proof is similar to that of Theorem~\ref{thm:lower}, and hence the details are omitted.
\end{proof}
\vgap

Observe that the instance-dependent value $z^*$ (see~\eqref{lp:def:yzstar}) in the oracle complexity result~\eqref{lp:simplifycomplexity} satisfies $z^*=p^{1/(p-1)}g^{1/p}(x^*)$, and that $g^{1/p}(x^*)$ also appears in the lower complexity bound in Theorem~\ref{lp:thm:lower}. Therefore, the oracle complexity result in~\eqref{lp:simplifycomplexity} is nearly optimal up to the logarithmic factor and some other constant factors.
\section{Conclusion}

In this paper, we propose a novel lifting formulation for the $L_p$ risk minimization problem, and transform it to a convex-concave stochastic saddle point problem. Since the objective function is not uniformly Lipschitz continuous, we design a two-layer probabilistic bisection method to solve the stochastic saddle point problem and achieve an instance-dependent stochastic oracle complexity. Moreover, we develop an instance-dependent lower complexity bound for the $L_p$ risk minimization problem, which nearly matches the oracle complexity that we obtain and shows the near optimality of our method. For the future directions, it is interesting to conduct numerical experiments based on real-world datasets and evaluate the empirical performance of our method.
\section*{Appendix A: High Probability Guarantees under Sub-Gaussian Assumptions}

To show high probability convergence for Algorithm~\ref{algo:PB}, in view of Lemma~\ref{lem:subdiff}, we are essentially seeking a high probability guarantee for the relation $\psi^*\geq\psi(z_k)+\zeta_T(z^*-z_k)-\epsilon/2$ in each outer iteration $k$ of Algorithm~\ref{algo:PB}. To this end, under the sub-Gaussian Assumption~\ref{assumption3}, we first present some properties of $l'_z(x,y,\lambda,\xi)$ (see the definition in~\eqref{def:lzsubgrad}) and $\cL'_z(x,y,z,\lambda,\xi)$ (see the definition in~\eqref{eq:subgrad1}) 
in Lemma~\ref{lem:subgaussian} and 
Lemma~\ref{lem:zsubgaussian}, respectively.

\begin{lemma}
    Under Assumption~\ref{assumption3}, and conditioning on~\eqref{eq:Ycontainy}, for any $x\in X,y\in Y,z>0$ and $\lambda\in\Lambda$, we have
    \begin{align*}
        \E\left[\exp\left\{\left\|l'_z(x,y,\lambda,\xi)\right\|_*^2\Big/\sigma_z^2\right\}\right]\leq\exp\{1\},
    \end{align*}
    where $\sigma_z$ is defined in~\eqref{def:sigma}.
    \label{lem:subgaussian}
\end{lemma}
\begin{proof}
    In view of~\eqref{def:lzsubgrad}, $\|l'_z(x,y,\lambda,\xi)\|_*^2$ consists of three components, i.e.,
    \begin{align*}
        \left\|l'_z(x,y,\lambda,\xi)\right\|_*^2=\left\|\cL'_x(x,y,z,\lambda,\xi)\right\|_{*,X}^2+\left(\cL'_y(x,y,z,\lambda,\xi)\right)^2+\left(\cL'_\lambda(x,y,z,\lambda,\xi)\right)^2.
    \end{align*}
    Then, for any $x\in X,y\in Y,z>0$ and $\lambda\in\Lambda$, based on the procedures that we bound these three components in~\eqref{eq:Lygradbound},~\eqref{eq:Llamgradbound} and~\eqref{eq:Lxgradbound} respectively, we can derive
    \begin{align}
        \left\|l'_z(x,y,\lambda,\xi)\right\|_*^2=c_{z1}\left\|F'(x,\xi)\right\|_{*,X}^2+c_{z2}[F(x,\xi)-f(x)]^2+c_{z3}\left\|G'(x,\xi)\right\|_{*,X}^2+c_{z4},
        \label{eq1lem4algo}
    \end{align}
    where $c_{z1},c_{z2},c_{z3}$ and $c_{z4}$ are defined in~\eqref{def:c1234}. Therefore, we have
    \begin{align*}
        &\E\!\left[\exp\!\left\{\!\tfrac{\left\|l'_z(x,y,\lambda,\xi)\right\|_*^2}{\sigma_z^2}\!\right\}\right]\!=\!\E\!\left[\exp\!\left\{\!\tfrac{c_{z1}\left\|F'(x,\xi)\right\|_{*,X}^2}{\sigma_z^2}\!\right\}\!\times\!\exp\!\left\{\!\tfrac{c_{z2}[F(x,\xi)-f(x)]^2}{\sigma_z^2}\!\right\}\!\times\!\exp\!\left\{\!\tfrac{c_{z3}\left\|G'(x,\xi)\right\|_{*,X}^2}{\sigma_z^2}\!\right\}\!\times\!\exp\!\left\{\!\tfrac{c_{z4}}{\sigma_z^2}\!\right\}\right] \nn\\
        &\leq\E^{\tfrac{1}{4}}\!\left[\exp\!\left\{\!\tfrac{4c_{z1}\left\|F'(x,\xi)\right\|_{*,X}^2}{\sigma_z^2}\!\right\}\right]\!\times\!\E^{\tfrac{1}{4}}\!\left[\exp\!\left\{\!\tfrac{4c_{z2}[F(x,\xi)-f(x)]^2}{\sigma_z^2}\!\right\}\right]\!\times\!\E^{\tfrac{1}{4}}\!\left[\exp\!\left\{\!\tfrac{4c_{z3}\left\|G'(x,\xi)\right\|_{*,X}^2}{\sigma_z^2}\!\right\}\right]\!\times\!\E^{\tfrac{1}{4}}\!\left[\exp\!\left\{\!\tfrac{4c_{z4}}{\sigma_z^2}\!\right\}\right] \nn\\
        &\leq\E^{\tfrac{1}{4}}\left[\exp\left\{\tfrac{\left\|F'(x,\xi)\right\|_{*,X}^2}{\sigma_1^2}\right\}\right]\times\E^{\tfrac{1}{4}}\left[\exp\left\{\tfrac{[F(x,\xi)-f(x)]^2}{\sigma_2^2}\right\}\right]\times\E^{\tfrac{1}{4}}\left[\exp\left\{\tfrac{\left\|G'(x,\xi)\right\|_{*,X}^2}{\sigma_3^2}\right\}\right]\times\E^{\tfrac{1}{4}}\left[\exp\left\{1\right\}\right] \nn\\
        &\leq\left[\left(\exp\{1\}\right)^{\tfrac{1}{4}}\right]^4=\exp\{1\},
    \end{align*}
    where the equality follows from~\eqref{eq1lem4algo}, the first inequality utilizes H\"{o}lder's inequality, the second inequality applies~\eqref{def:sigma}, and the third inequality follows from Assumption~\ref{assumption3}.
\end{proof}
\vgap

\begin{lemma}
    Under Assumption~\ref{assumption3}, and conditioning on~\eqref{eq:Ycontainy}, we define
    \begin{align}
        \Tilde{\sigma}_z^4:=\tfrac{32c^2}{z^4}\max\left\{\sigma_4^4,32(L_fD_X+1)^4+\beta^4\right\},
        \label{def:tildesigma}
    \end{align}
    then for any $x\in X,y\in Y,z>0$ and $\lambda\in\Lambda$, we have
    \begin{align*}
        \E\left[\exp\left\{\left(\cL'_z(x,y,z,\lambda,\xi)-L'_z(x,y,z,\lambda)\right)^2\Big/\Tilde{\sigma}_z^4\right\}\right]\leq\exp\{1\}.
    \end{align*}
    \label{lem:zsubgaussian}
\end{lemma}
\begin{proof}
    In view of the definition of $\cL'_z(x,y,z,\lambda,\xi)$ in~\eqref{eq:subgrad1} and $L'_z(x,y,z,\lambda)=\E[\cL'_z(x,y,z,\lambda,\xi)]$, we have
    \begin{align}
        &\left(\cL'_z(x,y,z,\lambda,\xi)-L'_z(x,y,z,\lambda)\right)^2=\left\{-\tfrac{c}{z^2}(F(x,\xi)-y)_+^2+\tfrac{c}{z^2}\E\left[F(x,\xi)-y)_+^2\right]\right\}^2 \nn\\
        &\leq\tfrac{2c^2}{z^4}\left\{(F(x,\xi)-y)_+^4+\E^2\left[(F(x,\xi)-y)_+^2\right]\right\} \nn\\
        &\leq\tfrac{2c^2}{z^4}\left\{8(F(x,\xi)-f(x))_+^4+8(f(x)-y)^4+\left\{2\E\left[(F(x,\xi)-f(x))_+^2\right]+2(f(x)-y)^2\right\}^2\right\} \nn\\
        &\leq\tfrac{2c^2}{z^4}\left\{8(F(x,\xi)-f(x))_+^4+128(L_fD_X+1)^4+\left[2\beta^2+8(L_fD_X+1)^2\right]^2\right\} \nn\\
        &\leq\tfrac{16c^2}{z^4}\left[(F(x,\xi)-f(x))_+^4+32(L_fD_X+1)^4+\beta^4\right],
        \label{eq:eq1lem4'algo}
    \end{align}
    where the first, second and fourth inequalities follow from Young's inequality, the second inequality also applies $(F(x,\xi)-y)_+\leq (F(x,\xi)-f(x))_++|f(x)-y|$, and the third inequality utilizes~\eqref{assum:fbddvar},~\eqref{eq:Ycontainy} and~\eqref{eq:ybound}. Therefore, we have
    \begin{align*}
        &\E\left[\exp\left\{\left(\cL'_z(x,y,z,\lambda,\xi)-L'_z(x,y,z,\lambda)\right)^2\Big/\Tilde{\sigma}_z^4\right\}\right] \\
        &\leq\E\left[\exp\left\{\tfrac{16c^2}{z^4}(F(x,\xi)-f(x))_+^4\big/\Tilde{\sigma}_z^4\right\}\times\exp\left\{\tfrac{16c^2}{z^4}\left[32(L_fD_X+1)^4+\beta^4\right]\Big/\Tilde{\sigma}_z^4\right\}\right] \\
        &\leq\E^{\tfrac{1}{2}}\left[\exp\left\{\tfrac{32c^2}{z^4}(F(x,\xi)-f(x))_+^4\big/\Tilde{\sigma}_z^4\right\}\right]\times\E^{\tfrac{1}{2}}\left[\exp\left\{\tfrac{32c^2}{z^4}\left[32(L_fD_X+1)^4+\beta^4\right]\Big/\Tilde{\sigma}_z^4\right\}\right] \\
        &\leq\E^{\tfrac{1}{2}}\left[\exp\left\{(F(x,\xi)-f(x))_+^4\big/\sigma_4^4\right\}\right]\times\E^{\tfrac{1}{2}}\left[\exp\{1\}\right]\leq\left[\left(\exp\{1\}\right)^{\tfrac{1}{2}}\right]^2=\exp\{1\},
    \end{align*}
    where the first inequality follows from~\eqref{eq:eq1lem4'algo}, the second inequality utilizes Cauchy--Schwarz inequality, the third inequality applies~\eqref{def:tildesigma}, and the fourth inequality follows from Assumption~\ref{assumption3}.
\end{proof}
\vgap

With the help of Lemma~\ref{lem:subgaussian} and Lemma~\ref{lem:zsubgaussian}, we are now able to provide the high probability guarantee for the relation $\psi^*\geq\psi(z)+\zeta_T(z^*-z)-\epsilon/2$, following a similar approach to that in Lemma~\ref{lem:subdiff}.

\begin{lemma}
    Under Assumption~\ref{assumption3}, and conditioning on~\eqref{eq:Ycontainy}, for any $\alpha\in(0,1)$, if we set
    \begin{align}
        T\geq\max\left\{4\left[9+5\ln\left(\tfrac{2}{\alpha}\right)\right]^2D_V^2\sigma_z^2,48\ln\left(\tfrac{2}{\alpha}\right)(z^*-z)^2\Tilde{\sigma}_z^4\right\}\epsilon^{-2} \quad \text{and} \quad \gamma_t=\tfrac{D_V}{\sigma_z\sqrt{T}}
        \label{Tgamma2}
    \end{align}
    in Algorithm~\ref{algo:SMD}, where $D_V,\sigma_z,\Tilde{\sigma}_z$ are defined in~\eqref{def:D},~\eqref{def:sigma} and~\eqref{def:tildesigma}, respectively, then
    \begin{align*}
        \psi^*\geq\psi(z)+\zeta_T(z^*-z)-\tfrac{\epsilon}{2}
    \end{align*}
    holds with probability at least $1-\alpha$.
    \label{lem:highprobsubgrad}
\end{lemma}
\begin{proof}
    The structure of this proof is similar as the proof of Lemma~\ref{lem:subdiff}. Below we only show the differences of how we provide the high probability guarantees for the events $A$ and $B$ defined in~\eqref{def:eventA} and~\eqref{def:eventB} respectively, i.e., $P(A)\geq 1-\alpha/2$ and $\P(B\mid A)\geq 1-\alpha/2$.
    
    Let us first consider the event A defined in~\eqref{def:eventA}, i.e.,
    \begin{align*}
        A=\left\{\max_{u\in U}\left(\tsum_{t=0}^{T-1}\gamma_t\right)^{-1}\left(\tsum_{t=0}^{T-1}\gamma_t\Bar{l}'_z(u_t)^{\top}(u_t-u)\right)\leq\tfrac{\epsilon}{4}\right\}.
    \end{align*}
    To show $\P(A)\geq 1-\alpha/2$, we first take $\eta=1+\ln(2/\alpha)$ and define the following event
    \begin{align}
        A':=\left\{\tsum_{t=0}^{T-1}\gamma_t^2\left\|l_z'(u_t,\xi)\right\|_*^2\leq\eta\sigma_z^2\tsum_{t=0}^{T-1}\gamma_t^2\right\},
        \label{def:eventA'}
    \end{align}
    and we will show that $\P(A')\geq 1-\alpha/2$ and $\P(A\mid A')=1$ are satisfied. Let $\Delta_t=\gamma_t^2/(\tsum_{t=0}^{T-1}\gamma_t^2)$, then we have
    \begin{align}
        \E\left[\exp\left\{\tsum_{t=0}^{T-1}\Delta_t\left\|l_z'(u_t,\xi)\right\|_*^2\Big/\sigma_z^2\right\}\right]\leq\tsum_{t=0}^{T-1}\Delta_t\E\left[\exp\left\{\left\|l_z'(u_t,\xi)\right\|_*^2\Big/\sigma_z^2\right\}\right]\leq\tsum_{t=0}^{T-1}\Delta_t\exp\{1\}=\exp\{1\},
        \label{eq2lem5algo}
    \end{align}
    where the first inequality follows from Jensen's inequality, and the second inequality holds by Lemma~\ref{lem:subgaussian}. It then directly follows from~\eqref{eq2lem5algo} that
    \begin{align}
        \E\left[\exp\left\{\tsum_{t=0}^{T-1}\gamma_t^2\left\|l_z'(u_t,\xi)\right\|_*^2\Big/\left(\sigma_z^2\tsum_{t=0}^{T-1}\gamma_t^2\right)\right\}\right]\leq\exp\{1\}.
        \label{eq3lem5algo}
    \end{align}
    Now we consider the probability that the event $A'$ occurs, and we have
    \begin{align}
        &\P(A')\geq1-\exp\left\{\tsum_{t=0}^{T-1}\gamma_t^2\left\|l_z'(u_t,\xi)\right\|_*^2\Big/\left(\sigma_z^2\tsum_{t=0}^{T-1}\gamma_t^2\right)\right\}\Big/\exp\{\eta\}\geq 1-\exp\{1-\eta\}=1-\tfrac{\alpha}{2},
        \label{eq4lem5algo}
    \end{align}
    where the first inequality follows from Chernoff bound, and the second inequality utilizes~\eqref{eq3lem5algo}. According to the convergence analysis of the SMD method in~\cite[Lemma 3.1]{nemirovski2009robust} or~\cite[Lemma 4.6]{lan2020first}, the result~\eqref{eq1lem1algo} holds. Now we suppose that the event $A'$ occurs, and it follows from~\eqref{eq1lem1algo} that
    \begin{align}
        \left(\tsum_{t=0}^{T-1}\gamma_t\right)^{-1}\max_{u\in U}\tsum_{t=0}^{T-1}\gamma_t\Bar{l}'_z(u_t)^{\top}(u_t-u)\leq\left(\tsum_{t=0}^{T-1}\gamma_t\right)^{-1}\left(2D_V^2+\tfrac{5}{2}\eta\sigma_z^2\tsum_{t=0}^{T-1}\gamma_t^2\right)=\left(2+\tfrac{5}{2}\eta\right)\tfrac{D_V\sigma_z}{\sqrt{T}}\leq\tfrac{\epsilon}{4},
        \label{eq5lem5algo}
    \end{align}
    where the first inequality utilizes~\eqref{def:D},~\eqref{eq1lem1algo} and~\eqref{def:eventA'}, the equality follows from $\gamma_t=D_V/(\sigma_z\sqrt{T})$ in~\eqref{Tgamma2}, and the second inequality follows from $T\geq4[9+5\ln(2/\alpha)]^2D_V^2\sigma_z^2\epsilon^{-2}$ in~\eqref{Tgamma2} and $\eta=1+\ln(2/\alpha)$. In view of~\eqref{eq5lem5algo}, the event $A$ occurs when the event $A'$ occurs, thus $\P(A\mid A')=1$ holds, and by utilizing~\eqref{eq4lem5algo}, it follows that
    \begin{align}
        \P(A)\geq\P(A,A')=\P(A')\P(A\mid A')\geq 1-\tfrac{\alpha}{2}.
        \label{eq:probAnew}
    \end{align}
    
    Now we consider the event B. We first recall the definitions of the event $B$ and the $\sigma$-algebra $\cF_s$ in~\eqref{def:eventB} and~\eqref{def:calF}, respectively as
    \begin{align*}
        B=\left\{\zeta_T(z^*-z)\leq\E[\zeta_T\mid\cF_{T-1}](z^*-z)+\tfrac{\epsilon}{4}\right\} \qquad \text{and} \qquad \cF_s=\sigma(u_t\mid t=0,...,s).
    \end{align*}
    To show $\P(B\mid A)\geq 1-\alpha/2$, we apply~\cite[Lemma 4.1]{lan2020first}. Let $\Tilde{\Delta}_t=\gamma_t/(\tsum_{t=0}^{T-1}\gamma_t)$ and $\tau_t=\Tilde{\Delta}_t(\cL'_z(x_t,y_t,z,\lambda_t,\xi'_t)-L'_z(x_t,y_t,z,\lambda_t))(z^*-z)$ for $t=0,...,T-1$, it is easy to see that $\E[\tau_t\mid\cF_{T-1}]=0$ is satisfied. Moreover, according to Lemma~\ref{lem:zsubgaussian}, $\E[\exp\{\tau_t^2/[\Tilde{\Delta}_t^2(z^*-z)^2\Tilde{\sigma}_z^4]\}\mid\cF_{T-1}]\leq\exp\{1\}$ holds. Then
    \begin{align}
        &\P(B\mid\cF_{T-1})=\P\left(\tsum_{t=0}^{T-1}\tau_t\leq\tfrac{\epsilon}{4}\mid\cF_{T-1}\right)=1-\P\left(\tsum_{t=0}^{T-1}\tau_t>\tfrac{\epsilon}{4}\mid\cF_{T-1}\right) \nn\\
        &\geq1-\exp\left\{-\tfrac{\epsilon^2}{48(z^*-z)^2\Tilde{\sigma}_z^4\left(\tsum_{t=0}^{T-1}\Tilde{\Delta}_t^2\right)}\right\}\geq 1-\exp\left\{-\tfrac{\epsilon^2T}{48(z^*-z)^2\Tilde{\sigma}_z^4}\right\}\geq 1-\tfrac{\alpha}{2},
        \label{eq:probBnew}
    \end{align}
    where the first inequality directly follows from~\cite[Lemma 4.1]{lan2020first}, and the second and third inequalities are satisfied due to $\gamma_t=D_V/(\sigma_z\sqrt{T})$ and $T\geq 48\ln(2/\alpha)(z^*-z)^2\Tilde{\sigma}_z^4\epsilon^{-2}$ by~\eqref{Tgamma2}. It follows from~\eqref{eq:probBnew} that
    \begin{align}
        \P(B\mid A)\geq 1-\tfrac{\alpha}{2}.
        \label{eq:probBonAnew}
    \end{align}
    
    We finish the proof by combining~\eqref{eq:probAnew} and~\eqref{eq:probBonAnew}.
\end{proof}
\vgap

Utilizing Lemma~\ref{lem:highprobsubgrad}, we provide the proof of Theorem~\ref{thm:converge1highprob}.

\paragraph{Proof of Theorem~\ref{thm:converge1highprob}} This proof follows the same structure as that of Theorem~\ref{thm:converge1}, so we only highlight the key differences here.

First, under Assumption~\ref{assumption3}(b), taking $m\geq 3\ln(2/\alpha_0)\sigma_2^2$, the event~\eqref{eq:Ycontainy} occurs with probability at least $1-\alpha_0$, since the probability bound in~\eqref{eq:yscope2} with $\delta=1$ now becomes
\begin{align*}
    \P\left(\left|\hat{f}(x_0)-f(x_0)\right|\!>\!1\right)=\P\left(\tsum_{i=1}^m[F(x_0,\xi_i)\!-\!f(x_0)]\!>\!m\right)+\P\left(\tsum_{i=1}^m[F(x_0,\xi_i)\!-\!f(x_0)]\!<\!-m\right)\leq\tfrac{\alpha_0}{2}\!+\!\tfrac{\alpha_0}{2}=\alpha_0,
\end{align*}
where the inequality directly applies~\cite[Lemma 4.1]{lan2020first} to the two sequences of random variables $F(x_0,\xi_i)-f(x_0), i=1,...,m$ and $-[F(x_0,\xi_i)-f(x_0)], i=1,...,m$, respectively.

Second, recall that the inductive statement~\eqref{eq:thm1induction1} used in Theorem~\ref{thm:converge1} follows from the inductive step (see~\eqref{eq:induction2} in Lemma~\ref{lem:induction1}) that for any $k\in\{0,...,k_0-1\}$,
\begin{align*}
    \psi^*\geq\psi(z)+\zeta_T(z^*-z)-\tfrac{\epsilon}{2}
\end{align*}
holds with probability at least $1-\alpha/K$, which, under Assumption~\ref{assumption3} and in view of the parameter settings in~\eqref{Tgammakthmhighprob}, can be shown using Lemma~\ref{lem:highprobsubgrad}. This completes the proof.

\section*{Appendix B: High Probability Guarantees under Multiple Trajectories and Robust Distance Approximation}

Using multiple sample trajectories together with the robust distance approximation (RDA) procedure, we provide high probability guarantees for the events $A$ and $B$ defined in~\eqref{def:eventA} and~\eqref{def:eventB} in Lemma~\ref{lem:subdiff}, respectively. Consequently, in each outer iteration $k$ of Algorithm~\ref{algo:PB}, the relation $\psi^*\geq\psi(z_k)+\zeta_T(z^*-z_k)-\epsilon/2$ holds with high probability.

\begin{lemma}
    Under Assumptions~\ref{assumption1} and~\ref{assumption2}, and conditioning on~\eqref{eq:Ycontainy}, for any $\alpha\in(0,1)$, if we set
    \begin{align}
        T\geq\max\left\{\left[216(c_{2,*}c_{*,2}\!+\!1)D_{V,x_0}M_z\right]^2,\left(36|z^*-z|\Tilde{M}_z^2\right)^2\right\}\epsilon^{-2} \quad \text{and} \quad \gamma_t=\tfrac{D_{V,x_0}}{M_z\sqrt{T}}
        \label{Tgamma_highprobRDA}
    \end{align}
    in Algorithm~\ref{algo:SMD}, where $D_{V,x_0},M_z$ and $\Tilde{M}_z$ are defined in~\eqref{def:D},~\eqref{def:M^2} and~\eqref{def:tildeMz}, respectively, and the parameters $m_1,m_2$ and $m_3$ described in Section~\ref{sec:highprobRDA} are set to
    \begin{align}
        m_1\geq\log_2\left(\tfrac{4}{\alpha}\right), \quad m_2\geq 18\ln\left(\tfrac{4m_1}{\alpha}\right), \quad \text{and} \quad m_3\geq 18\ln\left(\tfrac{2}{\alpha}\right),
        \label{m123_lemRDA}
    \end{align}
    then
    \begin{align*}
        \psi^*\geq\psi(z)+\zeta_T(z^*-z)-\tfrac{\epsilon}{2}
    \end{align*}
    holds with probability at least $1-\alpha$.
    \label{lem:highprob_subgradRDA}
\end{lemma}
\begin{proof}
    For notational simplicity, we omit the outer-iteration subscript $k$ for the quantities introduced in Section~\ref{sec:highprobRDA}.
    
    Let us first consider the event A defined in~\eqref{def:eventA}. Specifically, we show that the event $\{S_t^{i^*}\leq\epsilon/4\}$ occurs with probability at least $1-\alpha/2$, where $S^i_t$ and $i^*$ are defined in~\eqref{def:RDAobject1} and~\eqref{def:i*RDA}, respectively. For the $i$-th call of Algorithm~\ref{algo:SMD}, define the events
    \begin{align*}
        A_{i,1}:=\left\{S^i_t\leq\tfrac{\epsilon}{12}\right\} \quad \text{and} \quad A_{i,2}:=\left\{\left|\hat{S}^{i,j_i^*}_t-S^i_t\right|\leq\tfrac{\epsilon}{12}\right\},
    \end{align*}
    where $\hat{S}^{i,j}_t$ is defined in~\eqref{def:RDAobject2} and $\hat{S}^{i,j_i^*}_t$ is the output of Algorithm~\ref{algo:RDA}. First, we have
    \begin{align}
        \P(A_{i,1})=\P\left(S^i_t\leq\tfrac{\epsilon}{12}\right)\geq 1-\tfrac{12\E\left[S^i_t\right]}{\epsilon}\geq \tfrac{3}{4},
        \label{prob1_lemRDA}
    \end{align}
    where the first inequality uses Markov's inequality, and the second inequality follows from Lemma~\ref{lem:SMDconverge} and the choice of parameters in~\eqref{Tgamma_highprobRDA}. To further provide a probability bound for $A_{i,2}$, for any $j\in\{1,2,...,m_2\}$, let us derive an upper bound for $\E[|\hat{S}^{i,j}_t-S^i_t|\mid\cF_{T-1}]$, where the $\sigma$-algebra $\cF_s=\sigma(u^i_t\mid t=0,...,s)$ (see~\eqref{def:calF}). To this end, we divide $|\hat{S}^{i,j}_t-S^i_t|$ into two parts, i.e.,
    \begin{align}
        \left|\hat{S}^{i,j}_t-S^i_t\right|&\leq\tfrac{1}{T}\max_{u\in U}\left|\tsum_{t=0}^{T-1}\left(l'_z\left(u^i_t,\xi^{i,j}_t\right)-\Bar{l}'_z\left(u^i_t\right)\right)^{\top}\left(u^i_t-u\right)\right| \nn\\
        &\leq\tfrac{1}{T}\left|\tsum_{t=0}^{T-1}\left(l'_z\left(u^i_t,\xi^{i,j}_t\right)-\Bar{l}'_z\left(u^i_t\right)\right)^{\top}\left(u^i_t-u_0\right)\right|+\tfrac{1}{T}\left|\tsum_{t=0}^{T-1}\left(l'_z\left(u^i_t,\xi^{i,j}_t\right)-\Bar{l}'_z\left(u^i_t\right)\right)^{\top}\left(u_0-u^{i,j*}\right)\right|,
        \label{eq1lemRDA}
    \end{align}
    where the first inequality applies $\gamma_t/(\tsum_{i=0}^{T-1}\gamma_t)=1/T$ from~\eqref{Tgamma_highprobRDA}, and $u^{i,j*}:=\argmax_{u\in U}|\tsum_{t=0}^{T-1}(l'_z(u^i_t,\xi^{i,j}_t)-\Bar{l}'_z(u^i_t))^{\top}(u^i_t-u)|$ is defined and used in the second inequality. Denote the two parts on the right hand side of~\eqref{eq1lemRDA} as $S_1$ and $S_2$, respectively. Then, we have
    \begin{align}
        \E\left[S_1\mid\cF_{T-1}\right]&\leq\E^{\tfrac{1}{2}}\left[S_1^2\mid\cF_{T-1}\right] = \left\{\tfrac{1}{T^2}\tsum_{t=0}^{T-1}\E\left[\left[\left(l'_z\left(u^i_t,\xi^{i,j}_t\right)-\Bar{l}'_z\left(u^i_t\right)\right)^{\top}\left(u^i_t-u_0\right)\right]^2\mid\cF_{T-1}\right]\right\}^{\tfrac{1}{2}} \nn\\
        &\leq\tfrac{1}{T}\left\{\tsum_{t=0}^{T-1}\E\left[\left\|u^i_t-u_0\right\|^2\left\|l'_z\left(u^i_t,\xi^{i,j}_t\right)-\Bar{l}'_z\left(u^i_t\right)\right\|_*^2\mid\cF_{T-1}\right]\right\}^{\tfrac{1}{2}} \nn\\
        &\leq\tfrac{\sqrt{2}D_{V,x_0}}{T}\left\{\tsum_{t=0}^{T-1}\E\left[\left\|l'_z\left(u^i_t,\xi^{i,j}_t\right)\right\|_*^2\mid\cF_{T-1}\right]\right\}^{\tfrac{1}{2}} \leq \tfrac{\sqrt{2}D_{V,x_0}M_z}{\sqrt{T}},
        \label{eq2lemRDA}
    \end{align}
    where the equality holds since $(l'_z(u^i_t,\xi^{i,j}_t)-\Bar{l}'_z(u^i_t))^{\top}(u^i_t-u_0), t=0,1,...,T-1$ are zero-mean and independent of each other when conditioning on $\cF_{T-1}$, the second inequality follows from the Cauchy--Schwarz inequality, the third inequality utilizes $D_{V,x_0}^2\geq\max_{u\in U}\|u-u_0\|^2/2$ in which $u_0=(x_0,y_0,\lambda_0)$, and the fourth inequality follows from~\eqref{eq:Mz}. Similarly,
    \begin{align}
        \E\left[S_2\mid\cF_{T-1}\right]&\leq\tfrac{1}{T}\E\left[\left\|u_0-u^{i,j*}\right\|\left\|\tsum_{t=0}^{T-1}\left(l'_z\left(u^i_t,\xi^{i,j}_t\right)-\Bar{l}'_z\left(u^i_t\right)\right)\right\|_*\mid\cF_{T-1}\right] \nn\\
        &\leq\tfrac{\sqrt{2}c_{2,*}D_{V,x_0}}{T}\E^{\tfrac{1}{2}}\left[\left\|\tsum_{t=0}^{T-1}\left(l'_z\left(u^i_t,\xi^{i,j}_t\right)-\Bar{l}'_z\left(u^i_t\right)\right)\right\|_2^2\mid\cF_{T-1}\right] \nn\\
        &=\tfrac{\sqrt{2}c_{2,*}D_{V,x_0}}{T}\left\{\tsum_{t=0}^{T-1}\E\left[\left\|l'_z\left(u^i_t,\xi^{i,j}_t\right)-\Bar{l}'_z\left(u^i_t\right)\right\|_2^2\mid\cF_{T-1}\right]\right\}^{\tfrac{1}{2}} \leq \tfrac{\sqrt{2}c_{2,*}c_{*,2}D_{V,x_0}M_z}{\sqrt{T}},
        \label{eq3lemRDA}
    \end{align}
    where the first inequality applies the Cauchy--Schwarz inequality, the second inequality uses $D_{V,x_0}^2\geq\|u_0-u^{i,j*}\|^2/2$, the equality holds since $l'_z(u^i_t,\xi^{i,j}_t)-\Bar{l}'_z(u^i_t), t=0,1,...,T-1$ are zero-mean and independent of each other when conditioning on $\cF_{T-1}$, and the third inequality follows from~\eqref{eq:Mz}. Combining~\eqref{eq2lemRDA} and~\eqref{eq3lemRDA}, we have
    \begin{align}
        \E\left[\left|\hat{S}^{i,j}_t-S^i_t\right|\mid\cF_{T-1}\right]=\E[S_1\mid\cF_{T-1}]+\E[S_2\mid\cF_{T-1}]\leq\tfrac{\sqrt{2}(c_{2,*}c_{*,2}+1)D_{V,x_0}M_z}{\sqrt{T}}\leq\tfrac{\epsilon}{108}.
        \label{eq4lemRDA}
    \end{align}
    Hence, by Markov's inequality and the parameter setting~\eqref{Tgamma_highprobRDA}, it follows from~\eqref{eq4lemRDA} that
    \begin{align}
        \P\left(\left|\hat{S}^{i,j}_t-S^i_t\right|\leq\tfrac{\epsilon}{36}\mid\cF_{T-1}\right) \geq 1-\tfrac{36\E[|\hat{S}^{i,j}_t-S^i_t|\mid\cF_{T-1}]}{\epsilon}\geq\tfrac{2}{3}.
        \label{eq5lemRDA}
    \end{align}
    Since $|\hat{S}^{i,j}_t-S^i_t|,j=1,2,...,m_2$ are independent of each other and~\eqref{eq5lemRDA} holds, then we directly apply the result in~\cite[Proposition 9]{hsu2016loss} and obtain
    \begin{align}
        \P(A_{i,2}\mid\cF_{T-1})=\P\left(\left|\hat{S}^{i,j_i^*}_t-S^i_t\right|\leq\tfrac{\epsilon}{12}\mid\cF_{T-1}\right) \geq 1-\mathrm{e}^{-\tfrac{m_2}{18}}\geq 1-\tfrac{\alpha}{4m_1},
        \label{prob2_lemRDA}
    \end{align}
    where the second inequality follows from $m_2\geq 18\ln(4m_1/\alpha)$ in~\eqref{m123_lemRDA}. Combining~\eqref{prob1_lemRDA} and~\eqref{prob2_lemRDA}, we have
    \begin{align}
        \P\left(\hat{S}^{i,j_i^*}_t\leq\tfrac{\epsilon}{6}\right)\geq \tfrac{3}{4}\left(1-\tfrac{\alpha}{4m_1}\right)\geq\tfrac{1}{2}.
        \label{prob3_lemRDA}
    \end{align}
    Since $\hat{S}^{i,j_i^*}_t,i=1,2,...,m_1$ are independent of each other, then in view of~\eqref{prob3_lemRDA} and the definition of $i^*$ in~\eqref{def:i*RDA}, we obtain
    \begin{align}
        \P\left(\hat{S}^{i^*,j_{i^*}^*}_t\leq\tfrac{\epsilon}{6}\right)=\P\left(\left\{\min_{i\in\{1,2,...,m_1\}}\hat{S}^{i,j_i^*}_t\right\}\leq\tfrac{\epsilon}{6}\right)\geq 1-\left(\tfrac{1}{2}\right)^{m_1}\geq 1-\tfrac{\alpha}{4},
        \label{prob4lemRDA}
    \end{align}
    where the second inequality follows from $m_1\geq\log_2(4/\alpha)$ in~\eqref{m123_lemRDA}. By taking the union bound over the event $\{\hat{S}^{i^*,j_{i^*}^*}_t\leq\epsilon/6\}$ in~\eqref{prob4lemRDA} and the events $A_{i,2},i=1,2,...,m_1$ in~\eqref{prob2_lemRDA}, we have
    \begin{align}
        \P\left(S^{i^*}_t\leq\tfrac{\epsilon}{4}\right)\geq 1-\tfrac{\alpha}{4}-\tfrac{\alpha}{4m_1}\times m_1=1-\tfrac{\alpha}{2}.
        \label{prob5lemRDA}
    \end{align}

    Now we discuss the event $B$ defined in~\eqref{def:eventB}. Let $\zeta^j$ be defined in~\eqref{def:zetakRDA} and $\zeta^{j^*}$ be the output of Algorithm~\ref{algo:RDA} (again omitting the subscript $k$). Below we show that $\{\zeta^{j^*}(z^*-z)\leq\E[\zeta^{j^*}|\cF_{T-1}](z^*-z)+\tfrac{\epsilon}{4}\mid\cF_{T-1}\}$ holds with probability at least $1-\alpha/2$. For any $j\in\{1,2,...,m_3\}$, set $\cF_s=\sigma(u^{i^*}_t\mid t=0,...,s)$ and $\Bar{\zeta}=\E[\zeta^j\mid\cF_{T-1}]$. Then
    \begin{align}
        \E\left[\left|\zeta^j-\Bar{\zeta}\right|\mid\cF_{T-1}\right] &\leq \mathrm{Var}^{\tfrac{1}{2}}\left(\zeta^j\mid\cF_{T-1}\right) = \left[\tfrac{1}{T^2}\tsum_{t=0}^{T-1}\mathrm{Var}\left(\cL'_z\left(x^{i^*}_t,y^{i^*}_t,z,\lambda^{i^*}_t,\xi'^{i^*,j}_t\right)\mid\cF_{T-1}\right)\right]^{\tfrac{1}{2}}\leq\tfrac{\Tilde{M}_z^2}{\sqrt{T}},
        \label{eq6lemRDA}
    \end{align}
    where the equality holds since $\cL'_z(x^{i^*}_t,y^{i^*}_t,z,\lambda^{i^*}_t,\xi'^{i^*,j}_t),t=0,1,...,T-1$ are independent of each other when conditioning on $\cF_{T-1}$, and the second inequality follows from~\eqref{eq2.5lem2algo}. By Markov's inequality and the parameter setting~\eqref{Tgamma_highprobRDA}, we obtain from~\eqref{eq6lemRDA} that
    \begin{align}
        \P\left(\left|\zeta^j-\Bar{\zeta}\right|\leq\tfrac{\epsilon}{12|z^*-z|}\mid\cF_{T-1}\right)\geq 1-\tfrac{12|z^*-z|\times\E[|\zeta^j-\Bar{\zeta}|\mid\cF_{T-1}]}{\epsilon}\geq\tfrac{2}{3}.
        \label{prob6lemRDA}
    \end{align}
    Since $|\zeta^j-\Bar{\zeta}|,j=1,2,...,m_3$ are independent of each other and~\eqref{prob6lemRDA} holds, then again, we directly apply the result in~\cite[Proposition 9]{hsu2016loss} and have
    \begin{align}
        \P\left(\left|\zeta^{j^*}-\Bar{\zeta}\right|\leq\tfrac{\epsilon}{4|z^*-z|}\mid\cF_{T-1}\right) \geq 1-\mathrm{e}^{-\tfrac{m_3}{18}}\geq 1-\tfrac{\alpha}{2},
        \label{prob7_lemRDA}
    \end{align}
    where the second inequality utilizes $m_3\geq18\ln(2/\alpha)$ in~\eqref{m123_lemRDA}. It then follows from~\eqref{prob7_lemRDA} that
    \begin{align}
        \P\left(\zeta^{j^*}(z^*-z)\leq\Bar{\zeta}(z^*-z)+\tfrac{\epsilon}{4}\mid\cF_{T-1}\right)\geq 1-\tfrac{\alpha}{2}.
        \label{prob8_lemRDA}
    \end{align}

    Combining~\eqref{prob5lemRDA} and~\eqref{prob8_lemRDA} completes the proof.
\end{proof}
\vgap

The proof of Theorem~\ref{thm:highprobRDA} directly follows from Lemma~\ref{lem:highprob_subgradRDA}.

\paragraph{Proof of Theorem~\ref{thm:highprobRDA}} This proof follows the same structure as that of Theorem~\ref{thm:converge1} and is similar to Theorem~\ref{thm:converge1highprob}; we therefore highlight only two differences here.

First, we compute $y_0$ in Algorithm~\ref{algo:PB} via Algorithm~\ref{algo:RDA} using the sample trajectories $\{\xi^j_i\}_{i=1}^m,j=1,2,...,m_0$. Let $\hat{f}^j(x_0)=\tsum_{i=1}^m F(x_0,\xi^j_i)/m$ and suppose that Algorithm~\ref{algo:RDA} outputs $\hat{f}^{j_0}(x_0)$. When $m=\lceil27\beta^2\rceil$, for any $j\in\{1,2,...,m_0\}$, Chebychev's inequality (or~\eqref{eq:yscope2}) yields $\P(|\hat{f}^j(x_0)-f(x_0)|\leq 3)\geq 2/3$. With $m_0=\lceil 18\ln(1/\alpha_0)\rceil$,~\cite[Proposition 9]{hsu2016loss} directly provides $\P(|\hat{f}^{j_0}(x_0)-f(x_0)|\leq 1)\geq 1-\mathrm{e}^{-m_0/18}=1-\alpha_0$. Hence,~\eqref{eq:Ycontainy} holds with probability at least $1-\alpha_0$.

Second, notice that the total number of oracle calls is $mm_0+[m_1(m_2+1)+m_3]K\Tilde{T}$. Indeed, computing $y_0$ needs $m$ samples in each of the $m_0$ sample trajectories. In each outer iteration of Algorithm~\ref{algo:PB}, we call Algorithm~\ref{algo:SMD} independently $m_1$ times, generating $(m_2+1)$ trajectories of $l'_z$ in each call and $m_3$ trajectories of $\cL'_z$ in the $i^*$-th call (see the definition of $i^*$ in~\eqref{def:i*RDA}).

\bibliographystyle{plain}
\bibliography{bibliography}

\begin{thebibliography}{10}

\bibitem{boob2023stochastic}
Digvijay Boob, Qi~Deng, and Guanghui Lan.
\newblock Stochastic first-order methods for convex and nonconvex functional constrained optimization.
\newblock {\em Mathematical Programming}, 197(1):215--279, 2023.

\bibitem{clarke1990optimization}
Frank~H Clarke.
\newblock {\em Optimization and nonsmooth analysis}.
\newblock SIAM, 1990.

\bibitem{coache2024reinforcement}
Anthony Coache and Sebastian Jaimungal.
\newblock Reinforcement learning with dynamic convex risk measures.
\newblock {\em Mathematical Finance}, 34(2):557--587, 2024.

\bibitem{fertis2012robust}
Apostolos Fertis, Michel Baes, and Hans-Jakob L{\"u}thi.
\newblock Robust risk management.
\newblock {\em European Journal of Operational Research}, 222(3):663--672, 2012.

\bibitem{ghadimi2020single}
Saeed Ghadimi, Andrzej Ruszczynski, and Mengdi Wang.
\newblock A single timescale stochastic approximation method for nested stochastic optimization.
\newblock {\em SIAM Journal on Optimization}, 30(1):960--979, 2020.

\bibitem{gotoh2014interaction}
Jun-ya Gotoh, Akiko Takeda, and Rei Yamamoto.
\newblock Interaction between financial risk measures and machine learning methods.
\newblock {\em Computational Management Science}, 11(4):365--402, 2014.

\bibitem{hiriart2004fundamentals}
Jean-Baptiste Hiriart-Urruty and Claude Lemar{\'e}chal.
\newblock {\em Fundamentals of convex analysis}.
\newblock Springer Science \& Business Media, 2004.

\bibitem{hsu2016loss}
Daniel Hsu and Sivan Sabato.
\newblock Loss minimization and parameter estimation with heavy tails.
\newblock {\em Journal of Machine Learning Research}, 17(18):1--40, 2016.

\bibitem{lan2020first}
Guanghui Lan.
\newblock {\em First-order and stochastic optimization methods for machine learning}, volume~1.
\newblock Springer, 2020.

\bibitem{lan2020algorithms}
Guanghui Lan and Zhiqiang Zhou.
\newblock Algorithms for stochastic optimization with function or expectation constraints.
\newblock {\em Computational Optimization and Applications}, 76(2):461--498, 2020.

\bibitem{lin2018level}
Qihang Lin, Runchao Ma, and Tianbao Yang.
\newblock Level-set methods for finite-sum constrained convex optimization.
\newblock In {\em International conference on machine learning}, pages 3112--3121. PMLR, 2018.

\bibitem{lin2018level2}
Qihang Lin, Selvaprabu Nadarajah, and Negar Soheili.
\newblock A level-set method for convex optimization with a feasible solution path.
\newblock {\em SIAM Journal on Optimization}, 28(4):3290--3311, 2018.

\bibitem{luthi2005convex}
Hans-Jakob L{\"u}thi and J{\"o}rg Doege.
\newblock Convex risk measures for portfolio optimization and concepts of flexibility.
\newblock {\em Mathematical programming}, 104:541--559, 2005.

\bibitem{nemirovski2009robust}
Arkadi Nemirovski, Anatoli Juditsky, Guanghui Lan, and Alexander Shapiro.
\newblock Robust stochastic approximation approach to stochastic programming.
\newblock {\em SIAM Journal on optimization}, 19(4):1574--1609, 2009.

\bibitem{nemirovskij1983problem}
Arkadij~Semenovi{\v{c}} Nemirovskij and David~Borisovich Yudin.
\newblock {\em Problem complexity and method efficiency in optimization}.
\newblock Wiley-Interscience, 1983.

\bibitem{rockafellar2000optimization}
R~Tyrrell Rockafellar, Stanislav Uryasev, et~al.
\newblock Optimization of conditional value-at-risk.
\newblock {\em Journal of risk}, 2:21--42, 2000.

\bibitem{ruszczynski2021stochastic}
Andrzej Ruszczynski.
\newblock A stochastic subgradient method for nonsmooth nonconvex multilevel composition optimization.
\newblock {\em SIAM Journal on Control and Optimization}, 59(3):2301--2320, 2021.

\bibitem{shapiro2021lectures}
Alexander Shapiro, Darinka Dentcheva, and Andrzej Ruszczynski.
\newblock {\em Lectures on stochastic programming: modeling and theory}.
\newblock SIAM, 2021.

\bibitem{yan2022adaptive}
Yonggui Yan and Yangyang Xu.
\newblock Adaptive primal-dual stochastic gradient method for expectation-constrained convex stochastic programs.
\newblock {\em Mathematical Programming Computation}, 14(2):319--363, 2022.

\bibitem{zhang2024optimal}
Zhe Zhang and Guanghui Lan.
\newblock Optimal methods for convex nested stochastic composite optimization.
\newblock {\em Mathematical Programming}, pages 1--48, 2024.

\end{thebibliography}

\end{document}